\title{Fluctuations of TASEP and LPP with general initial data}
\author{Ivan Corwin\thanks{Columbia University, Department of Mathematics,  2990 Broadway, New York, NY 10027, USA,
and Clay Mathematics Institute, 10 Memorial Blvd. Suite 902, Providence, RI 02903, USA, and Institute Henri Poincar\'{e}, 11 Rue Pierre et Marie Curie, 75005 Paris, France, and Massachusetts Institute of Technology, Department of Mathematics, 77 Massachusetts Avenue, Cambridge, MA 02139-4307, USA.\newline
email: \href{mailto:ivan.corwin@gmail.com}{\protect\nolinkurl{ivan.corwin@gmail.com}}},
Zhipeng Liu\thanks{Courant Institute of Mathematical Sciences, New York University,
251 Mercer Street, New York, NY 10012, USA\newline
  email: \href{mailto:zhipeng@cims.nyu.edu}{\protect\nolinkurl{zhipeng@cims.nyu.edu}}}, and
Dong Wang\thanks{Department of Mathematics, National University of Singapore, 10 Lower Kent Ridge Rd, Singapore 119076. \newline
  email: \href{mailto:matwd@nus.edu.sg}{\protect\nolinkurl{matwd@nus.edu.sg}}}}
\newtheorem{thm}{Theorem}[section]
\newtheorem{lem}[thm]{Lemma}
\newtheorem{cor}[thm]{Corollary}
\newtheorem{prop}[thm]{Proposition}
\newtheorem{definition}[thm]{Definition}
\newtheorem{claim}[thm]{Claim}
\theoremstyle{remark}
\newtheorem{rmk}{Remark}
\newcommand{\rood}[1]{}
\newcommand{\e}{\epsilon}
\newcommand{\Prob}{\mathbb{P}}
\newcommand{\Leq}{\mathcal{L}}
\newcommand{\Leqck}{\check{\mathcal{L}}}
\newcommand{\intZ}{\mathbb{Z}}
\newcommand{\realR}{\mathbb{R}}
\newcommand{\bigO}{\mathcal{O}}
\newcommand{\G}{\check{G}}
\newcommand{\B}{\mathbf{B}}
\newcommand{\A}{\mathbf{A}}
\newcommand{\AiryA}{\mathcal{A}}
\newcommand{\ie}{i.e.}
\newcommand{\eg}{e.g.}
\newcommand{\resp}{resp.}
\newcommand{\iid}{i.i.d.}
\newcommand{\anought}{\mathbf{a}_0}
\newcommand{\bnought}{\mathbf{b}_0}
\newcommand{\cnought}{\mathbf{c}_0}
\newcommand{\dnought}{\mathbf{d}_0}
\DeclareMathOperator{\GUE}{GUE}
\DeclareMathOperator{\GOE}{GOE}
\DeclareMathOperator{\ct}{central}
\DeclareMathOperator{\spiked}{spiked}
\DeclareMathOperator{\dist}{dist}
\DeclareMathOperator{\meso}{meso}
\DeclareMathOperator{\micro}{micro}
\DeclareMathOperator{\macro}{macro}
\DeclareMathOperator{\Ai}{Ai}
\DeclareMathOperator{\step}{step}
\DeclareMathOperator{\flatinit}{flat}
\DeclareMathOperator{\Bernoulli}{Bern}
\DeclareMathOperator{\wedgeflat}{step/flat}
\DeclareMathOperator{\wedgeBernoulli}{step/Bern}
\DeclareMathOperator{\flatBernoulli}{flat/Bern}
\DeclareMathOperator{\stat}{stat}
\DeclareMathOperator{\BM}{BM}
\DeclareMathOperator{\Tr}{Tr}
\begin{document}

\maketitle

\begin{abstract}
We prove Airy process variational formulas for the one-point probability distribution of (discrete time parallel update) TASEP with general initial data, as well as last passage percolation from a general down-right lattice path to a point. We also consider variants of last passage percolation with inhomogeneous parameter geometric weights and provide variational formulas of a similar nature. This proves one aspect of the conjectural description of the renormalization fixed point of the Kardar-Parisi-Zhang universality class.
\end{abstract}

\section{Introduction}

The totally asymmetric simple exclusion process (TASEP) is a prototypical interacting particle system, or (via integration) random growth process. The theory of hydrodynamics describes the law of large numbers for the evolution of the system's particle density, or height function. In particular, if $h(x;t)$ represents the height function, then $\e h(\e^{-1}x;\e^{-1}t)$ converges (as $\e\to 0$) as a space-time process to the deterministic solution to a Hamilton Jacobi equation with explicit (model dependent) flux (see, for example, \cite{Kriecherbauer-Krug10}). The solution, of course, depends on the initial data and in particular on the limit (as $\e\to 0$) of $\e h_0(\e^{-1}x)$. It is possible to consider initial data $h_{0,\e}$ which depends on $\e$ so that $\e h_{0,\e}(\e^{-1}x)$ has a limit.

The aim of the present paper is to describe, in a similar spirit, how fluctuations around the law of large numbers evolve. In particular, define
\begin{equation}\label{eq.he}
h^{\e}(x;t) = c_1 \e^{b} h(c_2 \e^{-1}x; c_3 \e^{-z} t) - \bar{h}^{\e}(x;t).
\end{equation}
Then it is conjectured in \cite{Corwin-Quastel-Remenik15} that if we take
\begin{equation}\label{eq.bz}
b=1/2, \quad\textrm{and}\quad  z=3/2,
\end{equation}
then for $c_1,c_2,c_3$ that are model dependent constants (chosen in terms of microscopic dynamics via the KPZ scaling theory \cite{Spohn14, Halpin_Healy-Krug-Meakin92}) and suitable centering $\bar{h}^{\e}(x;t)$ (coming from the hydrodynamic theory), the space-time process $h^{\e}(\cdot;\cdot)$ will have a universal limit $\mathfrak{h}(\cdot;\cdot)$ which is independent of the underlying model. The class of all models which satisfy this is called the Kardar-Parisi-Zhang universality class, and this limiting object is called the fixed point of this universality class.

Much of the description and almost all of the universality of this fixed point remains a matter of conjecture. One of the main conjectures provided in \cite{Corwin-Quastel-Remenik15} (see also the review \cite{Quastel-Remenik13}) about this fixed point is that its solution can be described via a variational problem (in the spirit of the Lax-Oleinik formula for the inviscid Burgers equation) involving a four-parameter random field called the space-time Airy sheet. A corollary of this conjectural description is that if the initial profile $h^{\e}(\cdot;0)$ converges (as a spatial process) to some function $\mathfrak{h}_0(\cdot)$, then we have the following distributional equality, valid for any fixed $x$:
\begin{equation}
  \Prob\big(\mathfrak{h}(x; 1)\geq -r\big)  = \Prob\Big(\max_{y\in \realR} \big(\AiryA(y) - (x-y)^2 -\mathfrak{h}_0(y)\big)\leq r\Big).
\end{equation}
Here $\AiryA(\cdot)$ is the Airy process (Section \ref{Sec.distributions}) and by scaling properties of $\mathfrak{h}$, this implies a similar conjecture for general $t$.

The main contribution of the present paper is a proof of this conjectured variational description for the limiting one-point distribution of discrete time parallel update TASEP.

Fix $q\in (0,1)$. Then the TASEP height function $h^{{\rm TASEP}}(x;t)$ is a continuous function which is composed of slope $\pm 1$ increments between each integer $x$ and $x+1$. The height function evolves so that independently, each $\vee$ (a $-1$ followed by $+1$ slope increment) present in the height function at integer time $t$ becomes a $\wedge$ (a $+1$ followed by $-1$ slope increment) at time $t+1$ with probability $1-q$. (See Section \ref{subsec:TASEP} for more on this process.) Define $h^{\e,{\rm TASEP}}(x;t)$ in the manner of (\ref{eq.he}), choose $b,z$ as in (\ref{eq.bz}), and fix the constants and take a particular $\bar{h}^{\e}(x; t)$
\begin{equation}
  \begin{gathered}
    c_1 = q^{-1/6} (1+\sqrt{q})^{-1/3},\quad c_2 = 2 q^{-1/6}(1+\sqrt{q})^{2/3},\quad c_3=2(1-\sqrt{q})^{-1}, \\
    \bar{h}^{\e}(x; t)=2q^{-1/6} (1+\sqrt{q})^{-1/3} t.
  \end{gathered}
\end{equation}

Theorem \ref{thm:main_TASEP} shows that if $h^{\e,{\rm TASEP}}(\cdot;0)$ converges in distribution (as a spatial process) to some function $\mathfrak{h}_0(\cdot)$ then (assuming certain growth hypotheses on $h^{\e,{\rm TASEP}}(x;0)$ as $x$ gets large)
$$
\lim_{\e\to 0} \Prob\Big(h^{\e,{\rm TASEP}}(x;1)\geq -r\Big)  = \Prob\Big(\max_{y\in \realR} \big(\AiryA(y) - (x-y)^2 -\mathfrak{h}_0(y)\big)\leq r\Big).
$$

Let us sketch where this result comes from (and how it is proved). Since the limiting result is phrased in terms of a variational problem, it is natural to look for a finite $\e$ variational problem. This is facilitated through the connection between discrete time parallel update TASEP and the geometric random weight last passage percolation (LPP) model. Recalling the description of the TASEP, for each $\vee$ whose bottom point is at position $(i,j)$, we can associated a random variable $w^{*}(i,j)$ which records the number of time steps until the $\vee$ becomes a $\wedge$. These $w^*(i,j)$ are i.i.d. and geometrically distributed with parameter $1-q$ so that $\Prob\big(w^* =k\big) = (1-q) q^{k - 1}$ for $k\in \{1,2,\ldots\}$. Let $G_{(x,y)}\big(h^{{\rm TASEP}}(\cdot;0)\big)$ denote the first time $t$ such that $h^{{\rm TASEP}}(x;t)>y$. The growth dynamics imply that $G_{(x,y)}\big(h^{{\rm TASEP}}(\cdot;0)\big)$ satisfies a simple recursion
\begin{equation}
G_{(x,y)}\big(h^{{\rm TASEP}}(\cdot;0)\big)= \max \Big(G_{(x-1,y-1)}\big(h^{{\rm TASEP}}(\cdot;0)\big),\,\,G_{(x+1,y-1)}\big(h^{{\rm TASEP}}(\cdot;0)\big)\Big) + w^*(x,y).
\end{equation}
Iterating this recursion yields the discrete variational formula
\begin{equation}
G_{(x,y)}\big(h^{{\rm TASEP}}(\cdot;0)\big) = \max_{\pi} \sum_{(i,j)\in \pi} w^*(i,j)
\end{equation}
where $\pi$ is any path starting at $(x,y)$ and proceeding by slope $\pm 1$ ($\swarrow$ or $\searrow$) increments of length $\sqrt{2}$ downward until it hits $h^{{\rm TASEP}}(\cdot;0)$. The sum over $(i,j)\in \pi$ is only over those integer vertices in $\pi$. This is illustrated in Figure \ref{fig:introfig}.

\begin{figure}[ht]
  \centering
  \includegraphics[scale=.9]{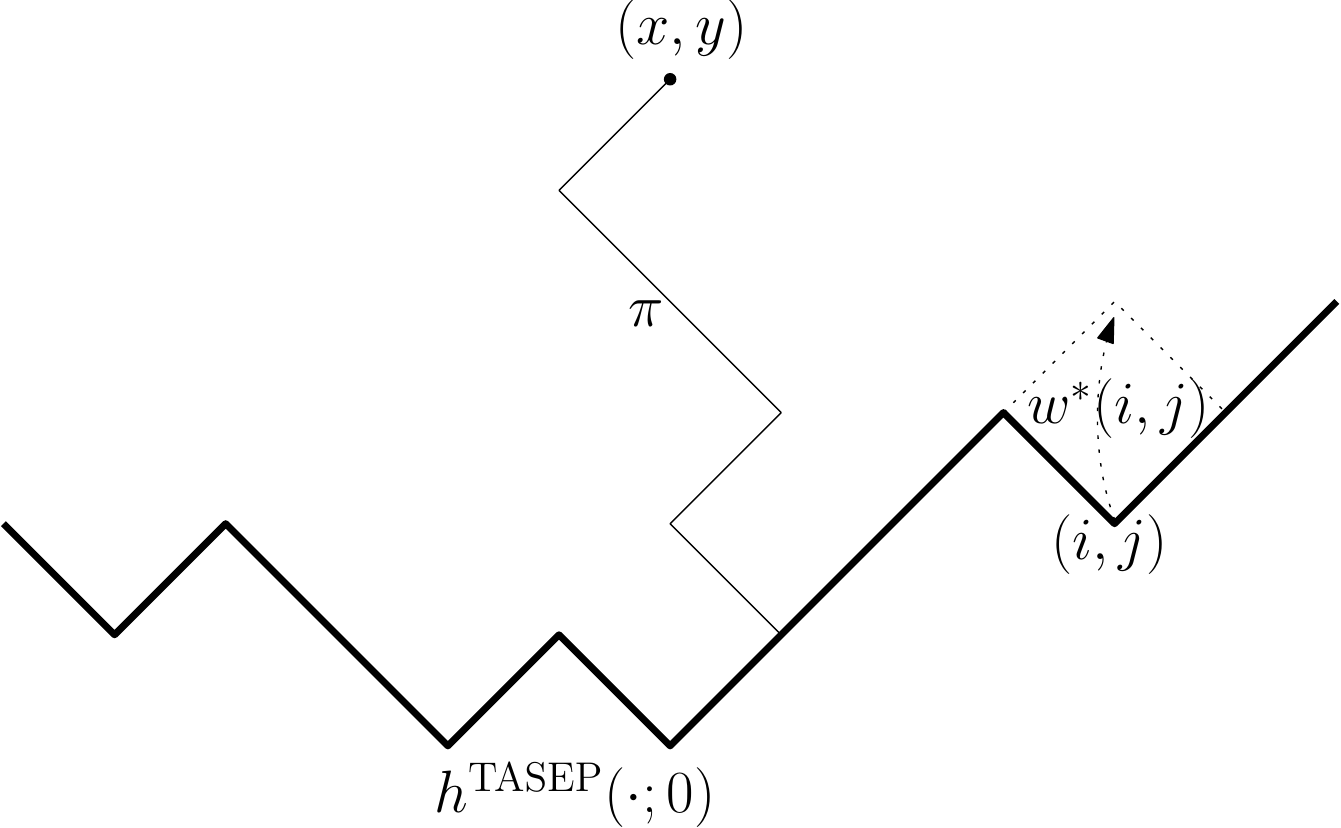}
  \caption{The discrete variational problem (last passage percolation) for TASEP.}
  \label{fig:introfig}
\end{figure}

Instead of restricting $\pi$ to end at any point of $h^{{\rm TASEP}}$, one can consider the maximal sum over paths which end at $(s,0)$, $s \in \intZ$. Johansson \cite{Johansson03} proved (see Proposition \ref{prop:Johansson_weak} below) that under the $b=1/2, z=3/2$ type scaling (i.e. up to constants replacing $s\mapsto \e^{-1}s$, $x\mapsto \e^{-1} x$, $y\mapsto \e^{-3/2} y$ and scaling the appropriately centered $G$ by $\e^{1/2}$) the $s$-indexed point-to-point last passage percolation converges as a process in $s$ to the Airy process $\mathcal{A}(\cdot)$ minus a parabola. However, along rays from $(x,y)$, the fluctuations of the last passage time enjoy slow decorrelation (see Theorem \ref{thm:uniform_slow_decorr}). Thus, as the endpoint of $\pi$ varies along $h^{{\rm TASEP}}(\cdot;0)$, the fluctuations remain Airy, up to a deterministic shift depending on $h^{{\rm TASEP}}(\cdot;0)$. It remains to show that the maximizer of the discrete variational problem converges to that of the limiting problem (i.e. that the resulting variational problem stays localized as $\e$ goes to zero). This requires certain growth conditions on $h^{{\rm TASEP}}(\cdot;0)$ and is achieved via a combination of large/moderate deviation bounds on TASEP and a utilization of Theorem \ref{lem:PNG_max} which contains some regularity estimates coming from the Gibbs property of the associated multi-layer PNG line ensemble (Section \ref{sec:Gibbs}).

In a similar manner we prove variational one-point distribution formulas for point to general curve LPP as well as LPP in which some of the weights have been perturbed. As a corollary of the TASEP and LPP results we provide variational formulas for a number of known one-point distributions, such as arise in TASEP with combinations of wedge, flat and Brownian-type initial data.

\subsection*{Organization of the paper}
Section \ref{sec:MMR} introduces the models (LPP and TASEP) as well as the main results (Theorems \ref{thm:main}, \ref{thm:main_TASEP}, and \ref{thm:inhomogeneous}) about them. The proofs of these theorems are applications of Theorem \ref{thm:uniform_slow_decorr} on the uniform slow decorrelation and Theorem \ref{lem:PNG_max} on the Gibbs property, and they are given in Section \ref{sec:proofs_of_main_theorems}. Proofs of  corollaries \ref{thm:Bernoulli_initial_condition} and \ref{cor:corrollaries_of_inhomogeneous_LPP} are given in Section \ref{sec:Proofs_of_corollaries}. The technical results, Theorems \ref{thm:uniform_slow_decorr} and \ref{lem:PNG_max}, are proved in Sections \ref{sec:uniform_slow_decor} and \ref{sec:Gibbs} respectively. Finally the appendix gives the proof of Lemma \ref{lem:weaker_technical}.

\subsection*{Acknowledgements}
The authors thank to Jinho Baik, Jeremy Quastel and Daniel Remenik for fruitful discussions. We also appreciate a close reading by our referees. Ivan Corwin was partially supported by the NSF through DMS-1208998 as well as by Microsoft Research and MIT through the Schramm Memorial Fellowship, by the Clay Mathematics Institute through the Clay Research Fellowship, by the Institute Henri Poincar\'{e} through the Poincar\'{e} Chair, and by the Packard Foundation through a Packard Foundation Fellowship. Zhipeng Liu greatfully acknowldeges the support from the department of mathematics, University of Michigan. Dong Wang was partially supported by the startup grant R-146-000-164-133.

\section{Models and main results}\label{sec:MMR}

\subsection{Point-to-curve LPP} \label{subsec:LPP_on_Z*Z}
Associate to each site $(i,j)\in \intZ^2$ an independent geometrically distributed random variable $w(i,j)$ with parameter $1 - q$, such that
\begin{equation} \label{eq:iid_geometric_weight}
  \Prob(w(i,j) = k) = (1 - q) q^k, \quad k = 0, 1, 2, \dotsc.
\end{equation}

The point-to-point last passage time between two lattice points $(x, y)$ and $(x', y')$ is denoted by $G_{(x', y')}(x, y)$ and defined by
\begin{equation} \label{eq:LPP_defn}
  G_{(x',y')}(x,y) \colonequals
  \begin{cases}
    \displaystyle \max_{\pi} \Big\{ \sum_{(i,j) \in \pi} w(i,j) \,\big\vert\, \pi \in (x, y) \nearrow (x', y') \Big\} & \text{if $x \leq x'$ and $y \leq y'$,} \\
    -\infty & \text{otherwise,}
  \end{cases}
\end{equation}
where $\pi$ stands for an up-right lattice path such that $\pi = (\pi_0 = (x, y), \pi_1, \pi_2, \dotsc, \pi_{x' + y' - x - y} = (x', y'))$ and $\pi_{k + 1} - \pi_k \in \{ (1, 0), (0, 1) \}$. More generally, if $(x', y')$ is a lattice point, and $(x, y)$ is on a line segment between two neighboring lattice points, then define
\begin{equation} \label{eq:non_integer_indices}
  G_{(x', y')}(x, y) := \text{the linear interpolation between}
  \begin{cases}
    G_{(x', y')}(x, [y]) \text{ and } G_{(x', y')}(x, [y] + 1) & \text{if $x \in \intZ$}, \\
    G_{(x', y')}([x], y) \text{ and } G_{(x', y')}([x] + 1, y) & \text{if $y \in \intZ$}.
  \end{cases}
\end{equation}
If $(x, y)$ and $(x', y')$ are lattice points, we define the short-handed notations for the reversed last passage time as
\begin{equation} \label{eq:defn_inverse}
  \G_{(x', y')}(x, y) = G_{(x, y)}(x', y') \quad \text{and} \quad \G(x, y) := \G_{(0, 0)}(x, y) = G_{(x, y)}(0, 0).
\end{equation}
We also define $\G_{(x', y')}(x, y)$ by linear interpolation if $(x, y)$ is on a line segment between two neighboring lattice points, analogous to \eqref{eq:non_integer_indices}. We will consider a more general point-to-curve last passage time, denoted by  $G_{(x', y')}(L)$ in this paper. Let $(x', y')$ be a lattice point and $L$ be a down-right lattice path in $\realR^2$ with $L = L(s) = \{ (x(s), y(s)) \mid s \in I \}$, for some interval $I\subset \realR$. Here a down-right lattice path means a (possibly infinite) directed path composed of down $\downarrow$ or right $\rightarrow$ oriented line segments which connect neighboring lattice points . 
Define
\begin{equation} \label{eq:defn_ptL}
  G_{(x', y')}(L) = \sup_{s \in I} \,\left\{ G_{(x', y')}(x(s), y(s)) \right\}.
\end{equation}
Although $s$ is a continuous parameter, it suffices to take the supremum among a discrete set of point-to-point last passage times.

As preliminaries for our work, let us recall some important results about the asymptotic  behavior of the point-to-point and point-to-curve last passage time. Focusing first on point-to-point last passage percolation, we state the law of large numbers, large/moderate deviations and the fluctuation limit theorems in the following proposition. Note that due to the symmetry of the lattice, we state our results in terms of $\G(x, y)$.

\begin{prop} \label{prop:previous_pt_to_pt}
  Fix $\gamma\in(0, \infty)$, then
  \begin{enumerate}[label=(\alph*)]
  \item (Johansson \cite{Johansson00}) \label{enu:prop:previous_pt_to_pt_1}
    \begin{equation} \label{eq:LLN_result}
      \lim_{N \to \infty} \frac{1}{N} \G(\gamma N, N) = a_0(\gamma), \quad\text{almost surely,}\quad \text{where} \quad a_0(\gamma) = \frac{(\gamma + 1) q + 2\sqrt{\gamma q}}{1 - q}.
    \end{equation}
  \item (Baik-Deift-McLaughlin-Miller-Zhou \cite{Baik-Deift-McLaughlin-Miller-Zhou01}) \label{enu:prop:previous_pt_to_pt_2}
    There exist a (large) constant $M > 0$ and a (small) constant $\delta > 0$ such that for large $N$, uniformly for all $M \leq x \leq \delta N^{1/3}$, there exists $c > 0$ such that
    \begin{equation}
      \Prob \left( \G(\gamma N, N) \leq a_0(\gamma)N - x N^{1/3} \right) \leq e^{-cx^3}. \label{eq:moderate_deviation_neg}
    \end{equation}
  \item (Johansson \cite{Johansson00}) \label{enu:prop:previous_pt_to_pt_3}
    \begin{equation}
      \lim_{N \to \infty} \Prob \left( \frac{\G(\gamma N, N) - a_0(\gamma) N}{b_0(\gamma) N^{1/3}} \leq x \right) = F_{\GUE}(x),
    \end{equation}
            where
    \begin{equation} \label{eq:fluctuation_constant_def}
      b_0(\gamma) = \frac{q^{1/6} \gamma^{-1/6}}{1 - q} (\sqrt{q} + \sqrt{\gamma})^{2/3} (1 + \sqrt{\gamma q})^{2/3},
    \end{equation}
    and $F_{\GUE}(x)$ is the GUE Tracy-Widom distribution for the limiting fluctuation of the largest eigenvalue in the Gaussian unitary ensemble (GUE), see Section \ref{Sec.distributions}.
  \end{enumerate}
\end{prop}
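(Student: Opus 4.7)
All three parts exploit the determinantal structure of geometric-weight LPP. Via the RSK correspondence (Johansson \cite{Johansson00}), for $M \geq N$ the random variable $\G(M, N)$ has the same law as the largest particle in a Meixner orthogonal polynomial ensemble on $\{0,1,2,\ldots\}^N$, so
\begin{equation}
  \Prob\bigl(\G(M, N) \leq s\bigr) = \det\bigl(I - K_N^{\mathrm{Meix}}\bigr)_{\ell^2(\{s+1, s+2, \ldots\})},
\end{equation}
where $K_N^{\mathrm{Meix}}$ is a Christoffel-Darboux kernel built from the Meixner polynomials with parameters determined by $M/N$ and $q$. All three conclusions will be read off from an asymptotic analysis of this representation.

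For (a), the value $a_0(\gamma)$ is identified as the right edge of the equilibrium measure that minimizes the weighted logarithmic energy associated to the Meixner ensemble; a short variational computation on $\realR$ yields the closed form displayed in \eqref{eq:LLN_result}. Concentration of $N^{-1}\G(\gamma N, N)$ near $a_0(\gamma)$ at the scale needed for a Borel-Cantelli argument follows either from the one-sided tail estimates already in \cite{Johansson00} (which the CLT in (c) sharpens), or, more robustly, from super/subadditivity of $G$ combined with the identification of the mean.

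For (c), one performs a steepest-descent analysis of $K_N^{\mathrm{Meix}}$ in the edge scaling $s = a_0(\gamma) N + \xi b_0(\gamma) N^{1/3}$. After conjugation and rescaling variables by $b_0(\gamma) N^{1/3}$, the kernel converges in trace norm on $L^2(\xi, \infty)$ to the Airy kernel $K_{\Ai}$, and the resulting Fredholm determinant converges to $F_{\GUE}(\xi)$. For (b), the same analysis must be made uniform in a growing window $\xi = -x$ with $M \leq x \leq \delta N^{1/3}$. This uniformity is the content of the Riemann-Hilbert asymptotics of Meixner polynomials in \cite{Baik-Deift-McLaughlin-Miller-Zhou01}: parametrix construction near the soft edge combined with Deift-Zhou steepest descent produces kernel estimates uniform on $\xi \in [-\delta N^{1/3}, -M]$. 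The finite-$N$ probability is thereby comparable (up to multiplicative factors) to $F_{\GUE}(-x)$, and the bound reduces to the Tracy-Widom lower-tail asymptotic $F_{\GUE}(-x) \leq e^{-cx^3}$ valid for $x \geq M$, most cleanly derived from the Painlev\'e II representation of $F_{\GUE}$.

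The main difficulty is precisely the cubic rate $x^3$ in (b). A naive bound $\det(I - K) \leq e^{-\Tr K}$, fed the semicircle-like Airy density $\sim \pi^{-1}\sqrt{|u|}$ for $u \to -\infty$, would give only $\Tr(K_{\Ai}\vert_{(-x,\infty)}) \sim \tfrac{2}{3\pi} x^{3/2}$ and hence a weaker $e^{-cx^{3/2}}$ tail. The true $x^3$ rate comes from the fact that many eigenvalues of $K_{\Ai}\vert_{(-x,\infty)}$ are close to $1$ rather than just positive; capturing this requires the Painlev\'e/Riemann-Hilbert route rather than a single trace estimate. Propagating this delicate cancellation from the limiting Airy kernel back through $K_N^{\mathrm{Meix}}$, uniformly over $M \leq x \leq \delta N^{1/3}$, is the technically demanding step and is exactly what \cite{Baik-Deift-McLaughlin-Miller-Zhou01} supplies.
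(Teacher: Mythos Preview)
The paper does not prove this proposition at all: it is stated as a compilation of known results, with parts (a) and (c) attributed to \cite{Johansson00} and part (b) to \cite{Baik-Deift-McLaughlin-Miller-Zhou01}, and no argument is given beyond the citations. Your proposal is therefore not so much a reconstruction of the paper's proof as a sketch of what happens inside those references.

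As such a sketch, your outline is accurate. The RSK/Meixner ensemble representation, the steepest-descent edge analysis yielding the Airy kernel, and the identification of $a_0(\gamma)$ as the right edge of the equilibrium measure are all correct and are the mechanisms used in \cite{Johansson00}. Your diagnosis of part (b) is also on target: the essential point is indeed that a crude trace bound on the Airy kernel gives only $e^{-cx^{3/2}}$, and the genuine $e^{-cx^3}$ lower-tail rate requires control of the many eigenvalues close to $1$, which is ultimately what the uniform Riemann--Hilbert asymptotics in \cite{Baik-Deift-McLaughlin-Miller-Zhou01} provide (and which at the limiting level is encoded in the Painlev\'e~II representation of $F_{\GUE}$). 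So nothing is missing; you have simply supplied more than the paper itself does, and what you supplied matches the literature the paper is citing.
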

In this paper, we need a counterpart of \eqref{eq:moderate_deviation_neg}, which is stated  below, and proved in Appendix \ref{sec:appendix}.
\begin{lem} \label{lem:weaker_technical}
  Let $\gamma\in(0, \infty)$. There exist a (large) constant $M > 0$ and a (small) constant $\delta > 0$ such that for large $N$, uniformly for all $M \leq x \leq \delta N^{1/3}$, there exists $c > 0$ such that
  \begin{equation}
    \Prob \left( \G(\gamma N, N) \geq a_0(\gamma)N + x N^{1/3} \right) < e^{-cx}.
  \end{equation}
\end{lem}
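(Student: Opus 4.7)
The plan is to exploit the exact solvability of geometric LPP via the Meixner ensemble. By the RSK correspondence (as used in Johansson~\cite{Johansson00}), $\G(\gamma N, N)$ agrees in distribution with the largest particle of an $N$-point Meixner ensemble, minus a deterministic shift. Writing $K_N$ for the associated discrete correlation kernel, one obtains the Fredholm determinant identity
\begin{equation*}
\Prob\bigl(\G(\gamma N, N) \leq t\bigr) = \det\bigl(I - K_N\bigr)_{\ell^2(I_t)}
\end{equation*}
for a suitable discrete interval $I_t \subset \intZ_{\geq 0}$ depending on $t$. Since $K_N$ defines a positive operator with spectrum in $[0,1]$, the elementary inequality $1 - \det(I - A) \leq \Tr(A)$ yields
\begin{equation*}
\Prob\bigl(\G(\gamma N, N) \geq t + 1\bigr) \leq \sum_{y \in I_t} K_N(y, y).
\end{equation*}
The problem therefore reduces to bounding the diagonal of $K_N$ uniformly over the moderate deviation window.

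For that diagonal bound I would use the standard double-contour integral representation of $K_N$ and carry out a crude steepest descent analysis. Parameterizing $t = a_0(\gamma) N + x N^{1/3}$ and $y = t + \xi N^{1/3}$ with $\xi \geq 0$, the integrand carries a factor of the form $e^{N \phi(\,\cdot\,;\, x + \xi)}$ whose saddle point depends on $x + \xi$. Choosing $M$ large and $\delta$ small enough to keep a fixed contour (a small perturbation of the critical one used in the proof of Proposition~\ref{prop:previous_pt_to_pt}\ref{enu:prop:previous_pt_to_pt_3}) admissible throughout the range $M \leq x \leq \delta N^{1/3}$, and extracting only the linear-in-$\xi$ part of the exponent, I expect an estimate of the form
\begin{equation*}
K_N(y, y) \leq C \exp\bigl(-c(x + \xi)\bigr).
\end{equation*}
Summing this geometric tail over $y \in I_t$ (equivalently over $\xi \geq 0$) produces the desired bound $\Prob\bigl(\G(\gamma N, N) \geq a_0(\gamma)N + x N^{1/3}\bigr) \leq e^{-cx}$.

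The main obstacle is making the contour deformation and integrand bounds uniform over the entire window $M \leq x \leq \delta N^{1/3}$, during which the true saddle migrates with $x$. Fortunately, because the target bound is only the relatively weak $e^{-cx}$ rather than the sharp Airy-type $e^{-cx^{3/2}}$, there is substantial slack: it is not necessary to pass through the exact saddle for each value of $x$, and one can discard the cubic correction to the exponent, retaining only the linear dependence on $x + \xi$. This is analogous to, and structurally simpler than, the lower tail analysis of Proposition~\ref{prop:previous_pt_to_pt}\ref{enu:prop:previous_pt_to_pt_2} by Baik-Deift-McLaughlin-Miller-Zhou~\cite{Baik-Deift-McLaughlin-Miller-Zhou01}.
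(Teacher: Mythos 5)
Your proposal is correct in outline and genuinely different from the paper's route at two points. The paper obtains the Fredholm determinant by writing $\Prob(G(M,N)\le n)=(1-q)^{MN}D_n(\phi)$ as a Toeplitz determinant and then applying the Geronimo--Case--Borodin--Okounkov formula, producing a factorized kernel $K_n(i,j)=\sum_k U(i,k)V(k,j)$ on $\ell^2\{n,n+1,\dots\}$; you go through the Meixner ensemble and its Christoffel--Darboux kernel directly. More importantly, the paper expands $1-\det(1-K_n)$ in the Fredholm series and bounds every term using a two-variable pointwise estimate $|K_n(i,j)|\lesssim e^{-c'(x^{3/2}+y^{3/2})}$ on the kernel, whereas you exploit that the Meixner kernel is a rank-$N$ projection --- hence positive with spectrum in $[0,1]$ --- to invoke $1-\det(I-A)\le\Tr(A)$ and bound only the diagonal. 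That is cleaner at the final step; the paper's route avoids any appeal to positivity but, in compensation, actually proves the stronger tail $e^{-cs^{3/2}}$ valid up to $s\le\delta N^{2/3}$, from which the lemma's $e^{-cx}$ for $x\le\delta N^{1/3}$ follows by $s^{3/2}\ge \sqrt{M}\,s$ once $s\ge M$. You correctly observe that the weaker target creates slack in the steepest-descent analysis, so a single fixed (near-critical) contour suffices throughout the moderate deviation window without retracking the migrating saddle.

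Two small cautions worth recording. First, the identity you state should be $\Prob(\G(\gamma N,N)\le t)=\det(I-K_N)_{\ell^2(\{t+N,t+N+1,\dots\})}$ (the hole probability above the shifted threshold), not over an interval $I_t\subset\intZ_{\ge0}$ depending on $t$ in a more general sense; with the correct semi-infinite domain your $\Tr$ bound reads $\sum_{y>t+N}K_N(y,y)$, which is exactly what you want. Second, when you sum the diagonal over $y$, the discrete spacing means the sum has $\sim N^{1/3}$ points per unit $\xi$, so the pointwise estimate you need is $K_N(y,y)\le C\,b_0(\gamma)^{-1}N^{-1/3}e^{-c(x+\xi)}$ --- i.e.\ the standard $N^{-1/3}$ Airy-edge scaling of the kernel must be carried along so that the sum, and not merely the integrand, is $O(e^{-cx})$. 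Both are bookkeeping issues rather than genuine gaps, but they must appear explicitly in a complete write-up.
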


Let us define a few constants which will be used throughout this paper:
\begin{equation}\label{eq:defn_nought}
\anought = a_0(1) = \frac{2\sqrt{q}}{1 - \sqrt{q}},\quad
\bnought = b_0(1) = \frac{q^{1/6} (1 + \sqrt{q})^{1/3}}{1 - \sqrt{q}}, \quad
\cnought = \frac{(1 + \sqrt{q})^{2/3}}{q^{1/6}},\qquad
\dnought = \frac{(1 + \sqrt{q})^{1/3}}{2q^{1/3}},
\end{equation}
as well as
\begin{equation}\label{eq:defn_noughtstar}
\anought^* = a^*_0(1) = \frac{2}{1 - \sqrt{q}} = \anought + 2,\qquad \dnought^*=\frac{q^{1/6}(1+\sqrt{q})^{1/3}}{2}.
\end{equation}

Define the limit shape curve (see Figure \ref{fig:L_and_L_check})
\begin{equation} \label{eq:defn_Lcheck}
  \begin{split}
    \Leqck := {}& \left\{ (x, y) \in (0, \infty) \times (0, \infty) \,\Big\vert\, y a_0 \left( \frac{x}{y} \right) = \anought \right\} \\
    = {}& \left\{ \big(r(\theta)\cos\theta, r(\theta)\sin\theta\big) \,\bigg\vert\, \text{$\theta \in (0, \frac{\pi}{2})$ and $r(\theta) = \frac{2(1 + \sqrt{q})}{(\cos\theta + \sin\theta) \sqrt{q} + 2\sqrt{\cos\theta \sin\theta}}$} \right\},
  \end{split}
\end{equation}
and
\begin{equation} \label{eq:defn_L}
  \Leq := \big\{ (x, y) \mid (1 - y, 1 - x) \in \Leqck \big\}.
\end{equation}
Note that
\begin{equation} \label{eq:c_3_lower_bound}
  \frac{\anought}{\lim_{x \to 0} a_0(x)} = 2 + 2q^{-1/2} \quad \text{and} \quad 1 - \frac{\anought}{\lim_{x \to 0} a_0(x)} = -1 - 2q^{-1/2},
\end{equation}
so $\Leqck$ ($\Leq$ \resp) is between $(2 + 2q^{-1/2}, 0)$ and $(0, 2 + 2q^{-1/2})$ ($(-1 - 2q^{-1/2}, 1)$ and $(1, -1 - 2q^{-1/2})$ \resp). Then by Proposition \ref{prop:previous_pt_to_pt}\ref{enu:prop:previous_pt_to_pt_1} we have that if $(x, y) \in \Leqck$, then with high probability, $\G([xN], [yN]) = \anought N + o(N)$, or equivalently, if $(x, y) \in \Leq$, then $G_{(N, N)}([xN], [yN]) = \anought N + o(N)$. In other words, these limit shapes reflect the law of large numbers under scaling by $N$ for those locations whose last passage time divided by $N$ is asymptotically $\anought$.
\begin{figure}[ht]
  \begin{minipage}[t]{0.45\linewidth}
    \centering
    \includegraphics{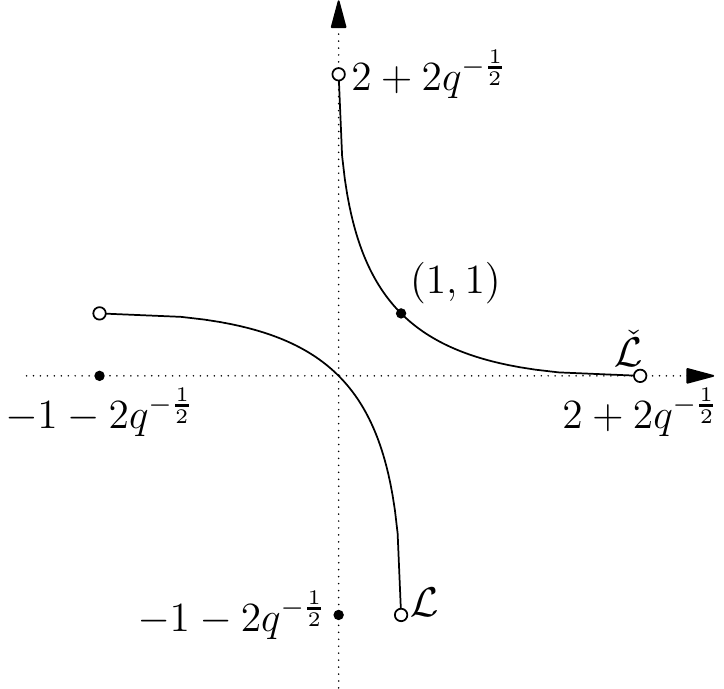}
    \caption{The shapes of $\Leq$ and $\Leqck$.}
    \label{fig:L_and_L_check}
  \end{minipage}
  \hspace{\stretch{1}}
  \begin{minipage}[t]{0.45\linewidth}
    \centering
    \includegraphics{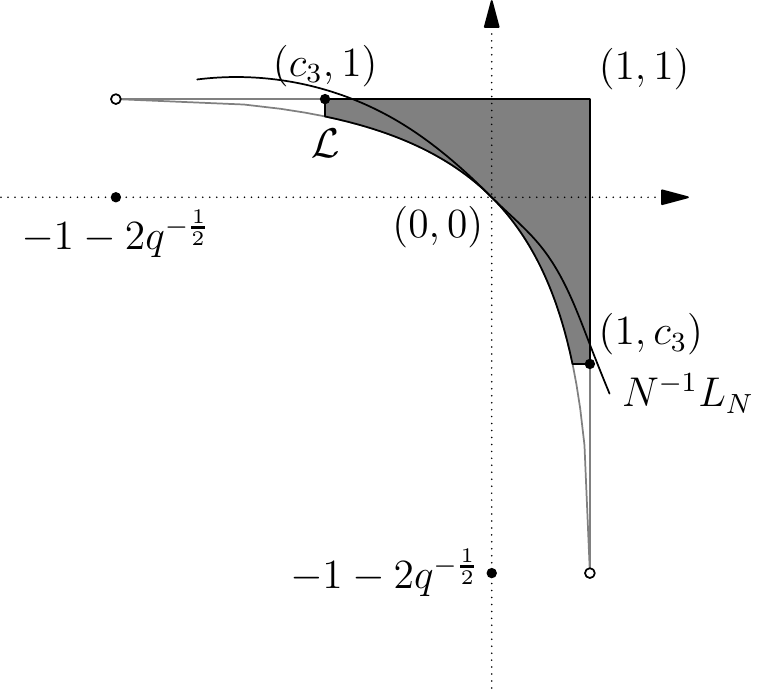}
    \caption{Regions $D$ (shaded) and $\tilde{D}$ ($D$ together with the two corners enclosed by gray lines) and an example of $N^{-1} L_N$.}
    \label{fig:L_N}
  \end{minipage}
\end{figure}


The following result shows how the Airy process $\AiryA(s)$ (see Section \ref{Sec.distributions}) arises in describing the spatial fluctuations of point-to-curve LPP. We define the saw-tooth curve $L^0$ that is approximately a anti-diagonal straight line
\begin{equation} \label{eq:defn_l^0}
  L^0 = \big\{ (-l^0(s) + s, -l^0(s) - s) \mid s \in \realR \big\}, \quad \text{where} \quad  l^0(s) =
  \begin{cases}
    k - s  & \text{if $s \in [k, k + \frac{1}{2}]$}, \\
    s - k - 1 & \text{if $s \in [k + \frac{1}{2}, k + 1]$}.
  \end{cases}
\end{equation}

\begin{prop}[Johansson \cite{Johansson03}] \label{prop:Johansson_weak}
  Define the stochastic process
  \begin{equation} \label{eq:H_N_def}
    H_N(s) \colonequals \frac{1}{\bnought N^{1/3}} \left( \G \left( N + l^0(\cnought N^{2/3} s) + s\cnought N^{2/3} , N + l^0(\cnought N^{2/3} s) - s\cnought N^{2/3} \right) - \anought N \right),
  \end{equation}
  where $\anought,\bnought,\cnought$ are defined in \eqref{eq:defn_nought}.
  Then on any interval $[-M, M]$, we have the weak convergence (as measures on $\mathcal{C}([-M,M],\realR)$) as $N \to \infty$ of
  \begin{equation}
    H_N(s) \Rightarrow \AiryA(s) - s^2.
  \end{equation}
\end{prop}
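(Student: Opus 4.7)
The plan has three parts: (i) reduce the sawtooth endpoint curve to a straight anti-diagonal line, (ii) prove convergence of the finite-dimensional distributions via the determinantal structure of geometric LPP, and (iii) establish tightness in $\mathcal{C}([-M, M], \realR)$.

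For the reduction, observe that $|l^0(\cdot)| \leq 1/2$, so the endpoint appearing in $H_N(s)$ differs from the straight anti-diagonal endpoint $(N + s\cnought N^{2/3},\, N - s\cnought N^{2/3})$ by a common shift of $O(1)$ in each coordinate. By monotonicity of LPP and the linear interpolation \eqref{eq:non_integer_indices}, this shift changes $\G$ by at most a sum of a few geometric weights along a path of bounded length, hence $O(1)$ in probability at each fixed $s$. A union bound over the $O(N^{2/3})$ relevant lattice values of $s\cnought N^{2/3}$ yields a worst-case discrepancy of $O(\log N)$, which is $o(N^{1/3})$. It therefore suffices to prove the claim with the straight-line variant
$$
\tilde{H}_N(s) \colonequals \frac{1}{\bnought N^{1/3}}\Big(\G\big(N + s\cnought N^{2/3},\, N - s\cnought N^{2/3}\big) - \anought N\Big).
$$

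For the finite-dimensional convergence $(\tilde{H}_N(s_1), \dotsc, \tilde{H}_N(s_k)) \Rightarrow (\AiryA(s_1) - s_1^2, \dotsc, \AiryA(s_k) - s_k^2)$, the approach is to use the determinantal representation of the joint law of point-to-point LPP times along the anti-diagonal, which arises (via RSK and the Eynard--Mehta theorem) as a Fredholm determinant of a space-time extended Meixner-type kernel. Steepest-descent asymptotics on the double-contour integral representation, with saddle at $z = -\sqrt{q}$ and phase perturbations induced by the offsets $\pm s_i \cnought N^{2/3}$, send this kernel (after conjugation and rescaling by $\bnought N^{1/3}$) to the extended Airy kernel; the centering by $\anought N$ absorbs the macroscopic contribution of the saddle, while the quadratic Taylor correction of the phase about the saddle produces the $-s^2$ parabola. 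This is precisely the analysis carried out by Johansson in \cite{Johansson03}.

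The main obstacle is tightness. The goal is to establish an increment estimate
$$
\E\big[|\tilde{H}_N(s) - \tilde{H}_N(t)|^p\big] \leq C\, |s - t|^{1 + \alpha}
$$
uniformly in $N$ and in $s, t \in [-M, M]$ for some $p \geq 2$ and $\alpha > 0$, so that Kolmogorov's criterion applies. Such bounds can be obtained either from direct moment estimates on anti-diagonal differences of $\G$ via the extended-kernel convergence, combined with the moderate-deviation tails from Proposition \ref{prop:previous_pt_to_pt}\ref{enu:prop:previous_pt_to_pt_2} and Lemma \ref{lem:weaker_technical} to rule out rare large excursions, or by realizing the anti-diagonal profile of $\G$ as the top curve of a multi-layer line ensemble whose Gibbs resampling property (cf.\ Section \ref{sec:Gibbs} and Theorem \ref{lem:PNG_max}) enforces Brownian-like uniform regularity on compact intervals. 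Combined with the finite-dimensional convergence, this tightness yields the claimed weak convergence in $\mathcal{C}([-M, M], \realR)$.
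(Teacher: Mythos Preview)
The paper does not give its own proof of this proposition; it is quoted as a result of Johansson \cite{Johansson03} and used as an input (indeed, the tightness of $H_N$ is invoked later in Section~\ref{sec:uniform_slow_decor} by citing \cite[Lemma~5.3]{Johansson03}). So there is nothing in the paper to compare your argument against.

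That said, your outline is a faithful sketch of Johansson's actual proof: finite-dimensional convergence comes from steepest descent on the extended kernel arising from the determinantal structure, and tightness is the real work. Johansson obtains tightness from kernel estimates rather than from a Gibbs property; your second suggested route (via the multi-layer PNG Gibbs property) is a valid modern alternative but is anachronistic relative to \cite{Johansson03}, and in any case the Gibbs machinery in Section~\ref{sec:Gibbs} of the present paper is set up for a different purpose (controlling maxima over mesoscopic windows) and would need to be repackaged to yield a modulus-of-continuity estimate.

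One small point: your ``reduction'' step is essentially content-free. The sawtooth $l^0$ is there precisely so that the endpoint is always a lattice point; your straight-line process $\tilde{H}_N$ already invokes the interpolation convention \eqref{eq:non_integer_indices}, and the two differ by at most the value of $\G$ at two adjacent lattice points, which is controlled by a single geometric weight. There is no need for a union bound over $O(N^{2/3})$ sites or an $O(\log N)$ argument; the discrepancy is $O(1)$ pointwise and vanishes after dividing by $N^{1/3}$ directly.
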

The definition and some properties of the Airy process are provided in Section \ref{Sec.distributions}. This functional limit theorem for the fluctuations of all $G_{(N, N)}(-l^0(s) + s, -l^0(s) - s)$ with $s = \bigO(N^{2/3})$, together with a tightness argument for large $s$, yields
\begin{prop}[Johansson \cite{Johansson03}] \label{prop:Johansson_max_Ai}
  As $N \to \infty$, the point-to-curve last passage time from $(N, N)$ to $L^0$ satisfies
  \begin{equation}
    \lim_{N \to \infty} \Prob \left( \frac{G_{(N, N)}(L^0) - \anought N}{\bnought N^{1/3}} \leq x \right) = \Prob \left( \max_{s \in \realR} (\AiryA(s) - s^2) \leq x \right).
  \end{equation}
\end{prop}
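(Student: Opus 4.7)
The plan is to combine the functional convergence on compact intervals from Proposition \ref{prop:Johansson_weak} with a tightness estimate that rules out contributions from large $|s|$. Since $L^0$ is a saw-tooth lattice path with $O(1)$ lattice vertices per unit of $s$, the supremum in \eqref{eq:defn_ptL} reduces (up to the linear interpolation of \eqref{eq:non_integer_indices}, which only produces $O(1)$ adjustments between neighboring vertices) to $\max_{s} H_N(s)$, so the task is to show
$$
\max_{s \in \realR} H_N(s) \,\Rightarrow\, \max_{s \in \realR} (\AiryA(s) - s^2).
$$

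For any fixed $M > 0$, the continuous mapping theorem applied to the continuous functional $f \mapsto \max_{[-M, M]} f$ on $\mathcal{C}([-M,M],\realR)$ yields, directly from Proposition \ref{prop:Johansson_weak},
$$
\max_{s \in [-M, M]} H_N(s) \,\Rightarrow\, \max_{s \in [-M, M]} (\AiryA(s) - s^2).
$$
To upgrade this to convergence of the global maxima, I would show that for every $\epsilon>0$ there exists $M$ such that both $\Prob\big(\max_{|s| > M} H_N(s) \geq -M^2/2\big) \leq \epsilon$ uniformly in large $N$ and the analogous statement for $\AiryA(s) - s^2$ hold. The limit side follows from standard tail estimates on $\AiryA$ (e.g.\ $\AiryA(s) = O(\sqrt{|s| \log |s|})$ a.s.), which is dwarfed by $-s^2$ for large $|s|$. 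Since $H_N(0) \Rightarrow F_{\GUE}$ by Proposition \ref{prop:previous_pt_to_pt}\ref{enu:prop:previous_pt_to_pt_3}, with probability tending to one as $M \to \infty$ the global maxima of $H_N$ and $\AiryA - s^2$ coincide with their maxima over $[-M,M]$, and passing to the limit $M \to \infty$ produces the claimed weak convergence.

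The finite-$N$ tail estimate splits into two regimes. In the moderate range $M \leq |s| \leq \delta N^{1/3}$, Taylor expanding $y \, a_0(x/y)$ at the endpoint $(N + s\cnought N^{2/3} + O(1), N - s\cnought N^{2/3} + O(1))$ around $\gamma = 1$ gives an LLN value of $\anought N - s^2 \bnought N^{1/3} + O(s^3 N^{-1/3})$, so the event $H_N(s) \geq -s^2/2$ amounts to an upward deviation of $\G$ by at least $\tfrac{1}{2} s^2 \bnought N^{1/3}$ from its mean; Lemma \ref{lem:weaker_technical} (with a shifted $\gamma$ and effective $N$, both close to their unperturbed values) then yields $\Prob\big(H_N(s) \geq -s^2/2\big) \leq e^{-c s^2}$, which is summable over integer $s$ with $|s| > M$ and gives an $O(e^{-cM^2})$ bound. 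For $|s| > \delta N^{1/3}$, the endpoint sits in a region whose LLN value is bounded below $\anought N$ by a positive constant multiple of $N$, and a cruder large-deviation estimate (applying Lemma \ref{lem:weaker_technical} at a genuinely different $\gamma$, or a direct geometric-weight tail bound along a polynomial number of candidate paths) handles this range with stretched exponential probability. The main obstacle is the uniform-in-$s$ upper tail estimate across $M \leq |s| \leq \delta N^{1/3}$: one must carefully track how both $a_0(\gamma)$ and the effective scaling parameter vary along $L^0$ in order to apply Lemma \ref{lem:weaker_technical} with constants that are uniform in $s$.
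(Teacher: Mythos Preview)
Your overall strategy---functional convergence on $[-M,M]$ plus a tail estimate ruling out the maximizer escaping to $|s|>M$---is exactly the one the paper sketches (in one line, citing Johansson) and carries out in full generality in the proof of Theorem~\ref{thm:main}. The gap is in your implementation of the finite-$N$ tail estimate.

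You write that the pointwise bound $\Prob(H_N(s)\geq -s^2/2)\leq e^{-cs^2}$ is ``summable over integer $s$ with $|s|>M$.'' But the maximum of $H_N$ over $|s|>M$ is \emph{not} a maximum over integer $s$: the lattice points on $L^0$ correspond to $s$ on a grid of spacing $\sim(\cnought N^{2/3})^{-1}$, so each unit interval in $s$ contains $\sim N^{2/3}$ sites. A union bound over all of them yields $N^{2/3}e^{-cM^2}$ for the first unit interval alone, which does not vanish as $N\to\infty$ with $M$ fixed. (Even with the stronger $e^{-cx^{3/2}}$ bound proved in the appendix this becomes $N^{2/3}e^{-c'M^3}$, which still fails.) The paper makes this point explicitly at the start of the mesoscopic estimate in Section~\ref{sec:meso_window}: ``the simple approach in the macroscopic estimate fails in this case, since summing up all the point-to-point LPP \dots\ gives a too large upper bound.''

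The fix the paper uses is Lemma~\ref{lem:PNG_max}, a consequence of the Gibbs property of the multi-layer PNG line ensemble: it bounds the probability that the maximum of $\G$ over an entire interval of $s$-values exceeds a linear threshold by a constant multiple of two \emph{point-to-point} tail probabilities (at the right edge of the interval and at a point further right). One then chops $[M,N^{c_2}]$ into geometrically growing blocks $I(k)$ and applies this lemma block by block; the resulting sum over $k$ is finite and small for large $M$. Your identification of the ``main obstacle'' as uniformity of constants in Lemma~\ref{lem:weaker_technical} misses the real issue, which is the entropy from the $\sim N^{2/3}$ lattice points per unit of $s$.
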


\subsection{Main result on fluctuations in point-to-curve LPP} \label{subsec_main_result_LPP}

Our main result, Theorem \ref{thm:main}, provides a similar variational characterization as Johansson's result (Proposition \ref{prop:Johansson_max_Ai}) for point-to-curve LPP with a general class of the down-right lattice paths. Before stating our theorem, we specify the class of down-right lattice paths which we will consider.

We will consider the point-to-curve LPP where the point is $(N,N)$ (or more generally $(N + [\sigma \cnought N^{2/3}], N - [\sigma \cnought N^{2/3}])$ for some $\sigma$) and the curve is a down-right lattice path $L_N$. As suggested by the subscript $N$, we will allow the curve to vary with $N$. However, to have a meaningful result, $L_N$ must satisfy two main properties. The first part of the hypothesis we impose is that under the scaling in which a window (which we call the central part) around $(0,0)$, of size $\bigO(N^{2/3})$ in the anti-diagonal direction and $\bigO(N^{1/3})$ in the diagonal direction, becomes of unit order, $L_N$ should converge to a function which we will denote by $\ell(\cdot)$. In fact, we will start with $\ell(\cdot)$ specified and define $L_N$ based on it. We will allow some variation from $\ell(s)$ which is denoted by $l_N(s)$, but will assume that in the window $|l_N(s)|$ is bounded by a sequence $m_N$ which goes to zero. The second part of the hypothesis ensures that outside the central part, $L_N$ should be sufficiently bounded away from the limit-shape $\Leq$. The purpose of this is to ensure that the maximizing path localizes in the central part. In fact, in order to ensure this localization we also assume that $\ell(s)$ does not grow any faster than $c_1s^2$ for some $c_1<1$. This is because the limit shape of $\Leq$ defined in \eqref{eq:defn_L} looks (with the scaling parameters we are using) like $s^2$ in the vicinity of the origin. (We also assume this for $|\ell(s)|$ for technical reason, see Remark \ref{rmk:lower_bound_ell}.)

We name the below hypothesis $\mathrm{Hyp}\big(C,c_1,c_2,c_3,a_{\infty},b_{\infty},\{m_N\}\big)$ owing to its dependence on certain parameters and functions. Owing to a coupling between LPP and TASEP, we also make use of a slightly modified hypotheses which we name $\mathrm{Hyp}^{*}\big(C,c_1,c_2,c_3,a_{\infty},b_{\infty},\{m_N\}\big)$ and describe near the end of the following definition.

\begin{definition}\label{thmhypo}
Consider constants
\begin{equation}
C>0,\quad c_1\in (0,1),\quad c_2\in (0,1/3),\quad c_3\in (-1 - 2q^{-1/2}, 0),\quad a_{\infty} \in \{ -\infty \} \cup \realR, \quad b_{\infty} \in \{ +\infty \} \cup \realR,
\end{equation}
and a sequence $\{m_N\}_{N\geq 1}\subset \realR_{+}$ converging to zero as $N$ goes to infinity. From \eqref{eq:defn_Lcheck}, \eqref{eq:defn_L} and Figure \ref{fig:L_and_L_check} it is clear that the horizontal line $y = 1$, the vertical line $x = 1$ and the curve $\Leq$ enclose a region, which we denote by $\tilde{D}$. Given $c_3$, define the region $D$ as the main part of $\tilde{D}$ with the two sharp corners cut off (Figure \ref{fig:L_N}):
\begin{equation} \label{eq:defn_D_D_tilde}
  D = \tilde{D} \setminus \{ (x, y) \mid \text{$x < c_3$ or $y < c_3$} \}.
\end{equation}
The lower bound on $c_3$ of  $-1 - 2q^{-1/2}$ is shown in \eqref{eq:c_3_lower_bound} and Figure \ref{fig:L_N} to correspond with the corner of $D$. Given $c_2$, for a down-right lattice path define its central part as
\begin{equation} \label{eq:defn_L^ct}
  L^{\ct} = \left\{ (x, y) \in L \,\big\vert \,\lvert x - y \rvert < 2 \cnought N^{2/3 + c_2}\right\}.
\end{equation}

We say that a continuous function $\ell:\realR\to \realR$ and a sequence of down-right lattice paths $L_N$ satisfy $\mathrm{Hyp}\big(C,c_1,c_2,c_3,a_{\infty},b_{\infty},\{m_N\}\big)$ if the following properties hold:
\begin{enumerate}
\item The function $\ell(s)$ satisfies the bound for $s\in \realR$ that
\begin{equation} \label{eq:upper_bound_l}
  |\ell(s)| < C + c_1 s^2.
\end{equation}
\item There is a sequence of intervals $I_N = (a_N, b_N) \subset (-N^{c_2},N^{c_2})$ converging to $(a_{\infty},b_{\infty})$ such that
    \begin{equation} \label{eq:L^ct}
      L^{\ct}_N = \left\{ \left( s\cnought N^{2/3} - (\ell(s) + l_N(s)) \dnought N^{1/3}, -s\cnought N^{2/3} - (\ell(s) + l_N(s)) \dnought N^{1/3} \right)\, \bigg\vert\, s \in I_N \right\},
    \end{equation}
    where $l_N(s): I_N \to \realR$ is some continuous function with $\max_{s \in I_N}\  \lvert l_N(s) \rvert \leq m_N$, and $\lim_{N \to \infty} m_N = 0$,
    and $\dnought$ is defined in \eqref{eq:defn_nought}.
\item The non-central part of $L_N$ satisfies
    \begin{equation} \label{eq:region_D}
      \left\{ (x, y) \,\big\vert\, (Nx, Ny) \in L_N \setminus L^{\ct}_N \right\} \cap \big((-\infty, 1] \times (-\infty, 1]\big) \subseteq D,
    \end{equation}
    as depicted in Figure \ref{fig:L_N}, and
    \begin{equation} \label{eq:L_N_outside}
      \dist \bigg( \big(L_N \setminus L^{\ct}_N\big) \cap \big((-\infty, N] \times (-\infty, N]\big) , \Big\{ (x, y) \,\big\vert\, \big( \tfrac{x}{N}, \tfrac{y}{N} \big) \in \Leq \Big\} \bigg) > N^{1/3 + 2c_2},
    \end{equation}
    where the distance is Euclidean. We have no requirement of $L_N$ outside of the region $(-\infty, N] \times (-\infty, N]$, because
\begin{equation} \label{eq:L_N_intersect_sector}
  G_{(N, N)}(L_N) = G_{(N, N)}\big(L_N \cap (-\infty, N] \times (-\infty, N]\big).
\end{equation}
\end{enumerate}

Let us also define a second (quite similar) hypothesis. Assume $C,c_1,c_2,a_{\infty},b_{\infty},\{m_N\}$ are as above and replace $c_3\in (-1 - 2q^{-1/2},0)$ by $c_3^*\in (-1 - 2q^{1/2},0)$.
We say that $\ell$ and the sequence $L_N^*$ satisfy $\mathrm{Hyp}^*\big(C,c_1,c_2,c_3^*,a_{\infty},b_{\infty},\{m_N\}\big)$ if they satisfy the above properties, with $\anought, \dnought, \Leq, D$ replaced by $\anought^*, \dnought^*, \Leq^*, D^*$. The first two starred terms $\anought^*, \dnought^*$ are given in Definition \eqref{eq:defn_noughtstar}, whereas $\Leq^*$ is define in \eqref{eq:Leqstar} and $D^*$ is defined as follows. The horizontal line $y = 1$, the vertical line $x = 1$ and the curve $\Leq^*$ enclose a region $\tilde{D}^*$. Given $c_3^*$, define the region
\begin{equation}
  D^* = \tilde{D}^* \setminus \{ (x, y) \mid \text{$x < c_3^*$ or $y < c_3^*$} \}.
\end{equation}
\end{definition}

We are interested in the limiting fluctuations of $G_{(N, N)}(L_N)$. The result which we now state describes that in terms of a variational problem involing the function $\ell(\cdot)$ related to $L_N$ through the above hypothesis. We actually state our result with an extra parameter $\sigma\in \realR$ which corresponds to shifting the point $(N,N)$ along an anti-diagonal line.
\begin{thm} \label{thm:main}
Fix constants $C, c_1, c_2, c_3, a_{\infty}, b_{\infty}$, a sequence $\{m_N\}_{N\geq 1}$ as in Definition \ref{thmhypo}, and $\sigma \in \realR$. Then for all $\epsilon>0$ there exists $N_0$ such that for all continuous functions $\ell:\realR\to \realR$ and sequences of down-right lattice paths $L_N$ satisfying $\mathrm{Hyp}\big(C,c_1,c_2,c_3,a_{\infty},b_{\infty},\{m_N\}\big)$, and for all $N>N_0$ and $x\in \realR$,
  \begin{equation} \label{ea:ineq_main}
    \left\lvert \Prob \bigg( \frac{G_{(N + [\sigma \cnought N^{2/3}], N - [\sigma \cnought N^{2/3}])}(L_N) - \anought N}{\bnought N^{1/3}} \leq x \bigg) - \Prob \bigg( \max_{s \in (a_{\infty}, b_{\infty})} \left( \AiryA(s) - (s - \sigma)^2 + \ell(s) \right) \leq x \bigg) \right\rvert
    < \epsilon.
  \end{equation}
\end{thm}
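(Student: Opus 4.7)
The strategy is to reduce $G_{(N+[\sigma\cnought N^{2/3}], N-[\sigma\cnought N^{2/3}])}(L_N)$ to a discrete maximum of point-to-point last passage times over the lattice points of $L_N \cap ((-\infty,N]\times(-\infty,N])$ (via \eqref{eq:defn_ptL} and \eqref{eq:L_N_intersect_sector}), decompose $L_N$ into its central part $L_N^{\ct}$ and its complement, and treat the two parts separately. The central part produces the Airy-minus-parabola limit of Proposition \ref{prop:Johansson_weak}, tilted deterministically by $\ell$; the non-central part is shown to be negligible via a moderate-deviation union bound. A compact cutoff is then removed using the tail estimate from the Gibbs property of the multi-layer PNG line ensemble supplied by Theorem \ref{lem:PNG_max}.

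For the central part, parametrize $L_N^{\ct}$ by $s \in I_N$ through \eqref{eq:L^ct} and denote the corresponding lattice point by $p_N(s)$. The arithmetic identity $\anought \dnought = \bnought$, verified directly from \eqref{eq:defn_nought}, shows that the diagonal shift $(\ell(s)+l_N(s))\dnought N^{1/3}$ of $p_N(s)$ contributes exactly $\bnought(\ell(s)+l_N(s))N^{1/3}$ to the mean $\anought \cdot (\text{diagonal length})$ between the shifted endpoint and $p_N(s)$. Comparing the point-to-point time to $p_N(s)$ with its counterpart to the sawtooth $L^0$ at offset parameter $s-\sigma$ via Theorem \ref{thm:uniform_slow_decorr} (uniform slow decorrelation), these agree, after subtracting this mean-shift and dividing by $\bnought N^{1/3}$, up to $o(1)$ uniformly in $s$ on compact subsets of $I_N$. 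Proposition \ref{prop:Johansson_weak} together with the stationarity and reflection symmetry of $\AiryA$, plus the continuous mapping theorem, then gives for each fixed $M$
\[
\max_{s \in I_N \cap [-M,M]} \frac{G_{(N+[\sigma\cnought N^{2/3}],N-[\sigma\cnought N^{2/3}])}(p_N(s)) - \anought N}{\bnought N^{1/3}} \Rightarrow \max_{s \in (a_{\infty},b_{\infty}) \cap [-M,M]} \big(\AiryA(s) - (s-\sigma)^2 + \ell(s)\big),
\]
with the $l_N$ contribution vanishing because $m_N \to 0$.

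For the non-central part, \eqref{eq:L_N_outside} together with \eqref{eq:region_D} place every lattice point $(x,y) \in L_N \setminus L_N^{\ct}$ inside the sector $(-\infty,N]\times(-\infty,N]$ on the inner side of $\Leq$ at Euclidean distance $\geq N^{1/3+2c_2}$ from $N\cdot\Leq$. Since perpendicular displacement by $r$ changes $\E[G_{(N,N)}(\,\cdot\,)]$ by $\Theta(r)$ near $\Leq$, this gives $\E[G_{(N,N)}(x,y)] \leq \anought N - c\,N^{1/3+2c_2}$ with $c>0$ depending only on $c_3$. Proposition \ref{prop:previous_pt_to_pt}\ref{enu:prop:previous_pt_to_pt_2}, applied after absorbing the endpoint shift $\sigma\cnought N^{2/3}$ into a change of aspect ratio $\gamma$, then yields
\[
\Prob\big(G_{(N+[\sigma\cnought N^{2/3}],N-[\sigma\cnought N^{2/3}])}(x,y) \geq \anought N + x\bnought N^{1/3}\big) \leq \exp(-c'N^{6c_2})
\]
for every fixed $x \in \realR$ and all large $N$. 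A union bound over the $O(N)$ lattice points of $L_N \setminus L_N^{\ct}$ leaves the tail still stretched-exponentially small, so the non-central part contributes $o(1)$ to the probability on the left of \eqref{ea:ineq_main}.

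To remove the cutoff $M$, Theorem \ref{lem:PNG_max} provides a uniform-in-$N$ tail estimate for $\max_{|s|\geq M}(H_N(s) + c_1 s^2)$ that vanishes as $M \to \infty$, which combined with the hypothesis $|\ell(s)| < C + c_1 s^2$ with $c_1<1$ localizes the prelimit argmax in $[-M,M]$ with probability tending to $1$. The same parabolic comparison in the limit, together with the continuity of $\AiryA$, ensures that the supremum of $\AiryA(s) - (s-\sigma)^2 + \ell(s)$ on $(a_{\infty},b_{\infty})$ is almost surely attained. The main obstacle is uniformity: the threshold $N_0$ must depend on $\ell$ only through $C, c_1$ and on $L_N$ only through $c_2, c_3, \{m_N\}$. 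The Gibbs-based tail bound of Theorem \ref{lem:PNG_max} is designed precisely for this purpose, since it is insensitive to the particular profile $\ell$; once it is in hand, combining the compact-$M$ Airy limit with the non-central moderate-deviation bound and the $M \to \infty$ localization produces \eqref{ea:ineq_main}.
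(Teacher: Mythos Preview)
Your overall decomposition into microscopic, non-central (macroscopic), and cutoff-removal (mesoscopic) contributions matches the paper's architecture, and your treatment of the central part via uniform slow decorrelation plus Proposition~\ref{prop:Johansson_weak} is correct (including the identity $\anought\dnought=\bnought$). There are, however, two genuine gaps.

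First, for the non-central part you invoke Proposition~\ref{prop:previous_pt_to_pt}\ref{enu:prop:previous_pt_to_pt_2}, but that is a \emph{lower}-tail bound, $\Prob(\G(\gamma N,N)\le a_0(\gamma)N - xN^{1/3})\le e^{-cx^3}$. What you need is an \emph{upper}-tail bound on $G_{(N,N)}(x,y)$, i.e.\ Lemma~\ref{lem:weaker_technical}. Your displayed exponent $\exp(-c'N^{6c_2})$ is precisely what the cube in the lower-tail estimate would produce and is a tell that the wrong inequality is being applied; with the correct lemma the bound is $\exp(-cN^{2c_2})$, which is still more than enough to beat the polynomial union bound.

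Second, and more seriously, your cutoff-removal step is too compressed to stand as written. Lemma~\ref{lem:PNG_max} does not by itself deliver ``a uniform-in-$N$ tail estimate for $\max_{|s|\ge M}(H_N(s)+c_1 s^2)$''; it is a three-point comparison inequality for the PNG top line, and to extract a decaying-in-$M$ bound one must (as the paper does) partition $[M,N^{c_2}]$ into geometrically growing blocks $I(k)$, choose for each block two auxiliary points so that the collinearity hypothesis of Lemma~\ref{lem:PNG_max} is met, and then feed in both the upper-tail bound of Lemma~\ref{lem:weaker_technical} and the lower-tail bound of Proposition~\ref{prop:previous_pt_to_pt}\ref{enu:prop:previous_pt_to_pt_2} for those auxiliary point-to-point times to obtain a summable estimate $\sum_k \Prob(k)$. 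The choice of block ratio $c_1''\in(1,2/(1+c_1'))$ is what makes the parabolic comparison with $\ell(s)<c_1's^2$ work, and this is where the hypothesis $c_1<1$ enters nontrivially. Without spelling out this dyadic application of the Gibbs lemma, the mesoscopic localization is asserted rather than proved.
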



\begin{rmk}
First, note that $\max_{s \in (a_{\infty}, b_{\infty})} \left(\AiryA(s) - (s - \sigma)^2 + \ell(s)\right)$ is a well defined random variable, see Corollary \ref{cor:well_defined_ness}. Second, observe that Theorem \ref{thm:main} (as well as the subsequently stated results of Theorems \ref{thm:main_TASEP} and \ref{thm:inhomogeneous}) are stated in terms of a deterministic down-right lattice path / initial condition / boundary data. Here in the statement of the theorem, and later in the proof, we show that the lower bound of convergence rate is independent of the particular form of $\ell(s)$ and $L_N$ so long as they satisfy the hypothesis. (The bound of rate, however, does depend on the parameters of the hypothesis.) This independence will be used later to deal with the case of random $\ell(s)$, say, distributed as the path of random walk. Let us briefly see how this works. Assume that $\ell(s)$ is random and that for all $\e>0$ there exist constants $C\in \realR$ and $c_1\in (0,1)$ such that with probability at least $1-\e$, $|\ell(s)|<C+c_1 s^2$ for all $s$. Then Theorem \ref{thm:main} holds for such a random $\ell(s)$. Instead of coupling all initial data $L_N$ to a single (possibly random) $\ell(s)$ it is also possible to consider $L_N$ which satisfy all of the conditions of Hypothesis \ref{thmhypo}, except that \eqref{eq:L^ct} is replaced by
    \begin{equation}
      L^{\ct}_N = \left\{ \left( s\cnought N^{2/3} - (\ell_N(s) + l_N(s)) \dnought N^{1/3}, -s\cnought N^{2/3} - (\ell_N(s) + l_N(s)) \dnought N^{1/3} \right)\, \bigg\vert\, s \in I_N \right\},
    \end{equation}
where $\ell_N(s)$ converges as a spatial process to some (possibly random) $\ell(s)$ satisfying the aforementioned bounds.
\end{rmk}

\begin{rmk} \label{rmk:lower_bound_ell}
  It is essential that $\ell(s)$ is bounded above quadratically for the right-hand side of \eqref{ea:ineq_main} to make sense, but the lower bound of $\ell(s)$ is unimportant there. Actually we assume the lower bound of $\ell(s)$ in \eqref{eq:upper_bound_l} only for technical reason, in \eg\ \eqref{eq:micro_estimate1}.
\end{rmk}

Theorem \ref{thm:main} is proved in Section \ref{sec:proof_thm:main}. There are two main ingredient in the proof. The first ingredient is to show that the end of the longest path localizes to those $(x,y)$ in the vicinity of $(0, 0)$ so that $x + y = \bigO(N^{1/3})$ and $x - y = \bigO(N^{2/3})$. This is achieved using moderate deviation bounds along with certain regularity results in Theorem \ref{lem:PNG_max} which follow from the Gibbs property associated with this model. Having localized our consideration, the second ingredient is to show that the theorem holds when $L_N$ is restricted to a region around $(0,0)$ of length $\bigO(N^{2/3})$. This is achieved by the uniform slow decorrelation property of the LPP model, see Theorem \ref{thm:uniform_slow_decorr}.

\subsection{TASEP with general initial data}\label{subsec:TASEP}

For the analysis of the TASEP model, we introduce a slightly different LPP model where the \iid\ random variables $w^*(i, j)$ associated to each site are geometrically distributed on $\intZ_{>0}$
\begin{equation} \label{eq:dist_star_def}
  w^*(i, j) \sim w(i, j) + 1, \quad \text{such that} \quad \Prob(w^*(i,j)=k)=(1-q)q^{k-1}, k=1,2,\dotsc.
\end{equation}
We similarly define the point-to-point LPP $G^*_{(x',y')}(x,y)$, point-to-curve LPP $G^*_{(x',y')}(L)$, and the reversed LPP $\G^*_{(x', y')}(x, y)$ by \eqref{eq:non_integer_indices}, \eqref{eq:defn_ptL} and \eqref{eq:defn_inverse} with the weights changed from $w(i, j)$ to $w^*(i, j)$. They have simple relations to the LPPs $G^*_{(x',y')}(x,y)$, $G^*_{(x',y')}(L)$ and $\G^*_{(x', y')}(x, y)$ defined there, for example, if one of $(x,y)$ and $(x', y')$ is a lattice point,
\begin{equation}
  G^*_{(x', y')}(x, y) = G_{(x', y')}(x, y) + x' + y' - x - y + 1.
\end{equation}

The TASEP model considered in our paper is that with discrete time and parallel updating dynamics \cite{Borodin-Ferrari-Sasamoto08}, and is defined as follows. Let infinitely many particles be initially at time $t = 0$ placed on the integer lattice $\intZ$ such that no lattice site is occupied by more than one particle, and there are infinitely many particles to the left of $0$. 
At each integer time, the particles decide whether to jump to the right neighboring site simultaneously. For any particle $x$ at time $t$, if its right neighboring site $x(t) + 1$ is occupied then it does not move and $x(t + 1) = x(t)$; otherwise it jumps to the right neighboring site ($x(t + 1) = x(t) + 1$) with  probability $1-q$, or does not move ($x(t + 1) = x(t)$) with probability $q$.
\begin{figure}[ht]
  \centering
  \includegraphics{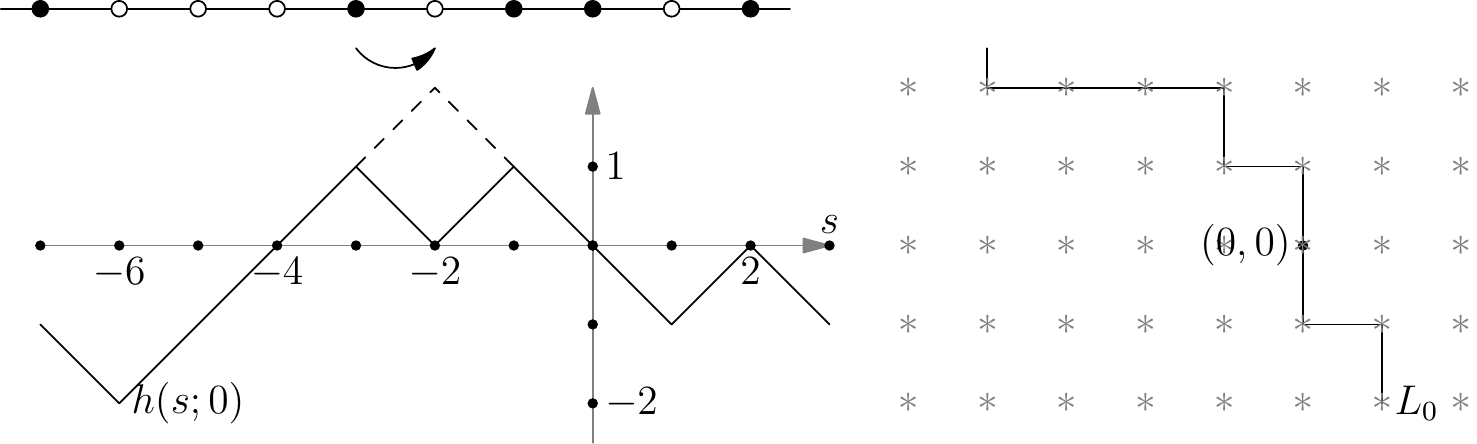}
  \caption{The height function $h(s; t)$ at the initial time $t = 0$. If at time $t = 1$ one particle jumps from $-3$ to $-2$, then $h(s; 1)$ is changed into the dashed shape. The polygonal chain $L$ representing the initial state of the model is shown on the right.}
  \label{fig:TASEP_and_h}
\end{figure}

At any time $t \geq 0$, we represent the positions of the particles by the height function $h(\cdot; t): \realR \to \realR$. We let $h(0; t) = 2N_t$ where $N_t$ is the number of particles that have jumped from site $-1$ to site $0$ during the time interval $[0, t)$. For any integer $k$, we define $h(k; t)$ inductively from $h(0; t)$ by $h(k + 1; t) - h(k; t) = \pm 1$ where the sign is positive (negative \resp) if the site $k$ is vacant (occupied \resp) by a particle at time $t$. At last, for non-integer $s$, we define $h(s; t)$ by the linear interpolation between $h([s]; t)$ and $h([s] + 1; t)$. See Figure \ref{fig:TASEP_and_h} for an example. Also noting that $h(s; t) = h(s; [t])$ for all $t \in \realR_+$, we have that $h(s; t)$ is determined by the values of $h(k; n)$ where $k, n \in \intZ$. Another observation is that the value of $h(k; t)$ is an integer that has the same parity of $k$. In the introduction we provided an alternative description for the dynamics of TASEP acting on the height function by changing $\vee$ to $\wedge$ according to geometrically distributed waiting times.

To analyze the dynamics of the TASEP model, or equivalently, the dynamics of the height function $h(s; t)$, we introduce the polygonal chain
\begin{equation} \label{eq:L_by_h}
  L = \left\{ \left( \frac{s}{2} + \frac{1}{2}h(s; 0), -\frac{s}{2} +\frac{1}{2} h(s; 0) \right) \bigg\vert\, s \in [K_1, K_2] \right\},
\end{equation}
to represent the initial configuration of the model, as shown in Figure \ref{fig:TASEP_and_h}, where  $K_1$ is the position of the leftmost unoccupied site at $t = 0$ if it exists, or $-\infty$ otherwise, and $K_2$ is one plus the position of the rightmost occupied site at $t = 0$ if it exists, or $+\infty$ otherwise.

As described in the introduction, the TASEP model can be coupled to the LPP model with weights $w^*(i, j)$ defined in \eqref{eq:dist_star_def} (see also \cite{Johansson00, Corwin-Ferrari-Peche12} for example). The relation between the distribution of $h(j; t)$ and the LPP is given by
\begin{equation} \label{eq:relation_TASEP_LPP}
  \Prob\big(h(j; t) > k\big) = \Prob \Big( G^*_{(\frac{k + j}{2}, \frac{k - j}{2})}(L) \leq t \Big),
\end{equation}
for any $j,k \in \intZ$ with the same parity.
Here $L$ is the polygonal chain defined in \eqref{eq:L_by_h}. This coupling follows by defining the TASEP height function at time $t$ as the rotated envelop of all points which has last passage time less than or equal to $t$. The weights correspond with the probabilities of particle movement.

\subsection{Main result on TASEP with general initial data}

Now we consider the TASEP model with general initial condition. Since the TASEP model is mapped to the LPP model with weight function given in \eqref{eq:dist_star_def}, the result for the TASEP is analogous to that of the LPP model stated in Section \ref{subsec_main_result_LPP}. Below we set up the notations for the LPP model with weight \eqref{eq:dist_star_def}, give technical conditions in terms of LPP, and then present the result in terms of the TASEP model.

Analogous to Proposition \ref{prop:previous_pt_to_pt}\ref{enu:prop:previous_pt_to_pt_1}, we have
\begin{equation}
  \lim_{N \to \infty} \frac{1}{N} \G^*(\gamma N, N) = a^*_0(\gamma) \quad \text{almost surely,}\quad \text{where} \quad a^*_0(\gamma) = a_0(\gamma) +\gamma+1 = \frac{\gamma+1 + 2\sqrt{\gamma q} }{1 - q}.
\end{equation}
Likewise analogous to $\Leqck$ and $\Leq$ defined in \eqref{eq:defn_Lcheck} and \eqref{eq:defn_L} (and recalling $\anought^* = a^*_0(1)$ from (\ref{eq:defn_noughtstar})), we define
\begin{equation}
  \begin{split}
    \Leqck^* := {}& \left\{ (x, y) \in (0, \infty) \times (0, \infty) \, \big\vert\,  y a^*_0 \left( \frac{x}{y} \right) = \anought^* \right\} \\
    = {}& \bigg\{ (r(\theta)\cos\theta, r(\theta)\sin\theta)\, \big\vert\, \text{$\theta \in (0, \frac{\pi}{2})$ and $r(\theta) = \frac{2(1 + \sqrt{q})}{(\cos\theta + \sin\theta) + 2\sqrt{\cos\theta \sin\theta q}}$} \bigg\},
  \end{split}
\end{equation}
and
\begin{equation}\label{eq:Leqstar}
  \Leq^* := \{ (x, y) \mid (1 - y, 1 - x) \in \Leqck^* \}.
\end{equation}
By Theorem \ref{prop:previous_pt_to_pt}\ref{enu:prop:previous_pt_to_pt_1}, \ref{enu:prop:previous_pt_to_pt_2}, we have that if $(x, y) \in \Leqck^*$, $\G^*(xN, yN) = \anought^*N + o(N)$, and equivalently if $(x, y) \in \Leq^*$, $G^*_{(N, N)}(xN, yN) = \anought^*N + o(N)$, with high probability.

\begin{thm} \label{thm:main_TASEP}
Fix constants $C, c_1, c_2, c^*_3, a_{\infty}, b_{\infty}$, a sequence $\{m_N\}_{N\geq 1}$ as in Definition \ref{thmhypo}, and $\sigma \in \realR$. We will consider down-right lattice paths $L^*_N$ defined by the initial condition of a TASEP model, that is, for each index $N$, we consider the TASEP model represented by a height function $h_N(s; t)$, and then let $L^*_N$ be the polygonal chain $L$ that is defined by $h_N(s; 0)$ in \eqref{eq:L_by_h}. Then for all $\epsilon>0$ there exists $N_0$ such that for all continuous functions $\ell:\realR\to \realR$ and sequences of down-right lattice paths $L^*_N$ satisfying $\mathrm{Hyp}^*\big(C,c_1,c_2,c^*_3,a_{\infty},b_{\infty},\{m_N\}\big)$, and for all $N>N_0$ and $x\in \realR$,
\begin{equation}
    \left\lvert \Prob \bigg( \frac{h_N(2\sigma \cnought N^{\frac{2}{3}}; \anought^* N) - 2N}{2\dnought^* N^{\frac{1}{3}}} > -x \bigg)  - \Prob \bigg( \max_{s \in (a_\infty, b_\infty)} \left( \AiryA(s) - (s - \sigma)^2 + \ell(s) \right) < x \bigg) \right\rvert < \epsilon.
\end{equation}
  \end{thm}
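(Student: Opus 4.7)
The plan is to deduce Theorem \ref{thm:main_TASEP} from a direct analog of Theorem \ref{thm:main} applied to the modified LPP $G^*$, using the TASEP--LPP duality \eqref{eq:relation_TASEP_LPP} together with a diagonal shift argument. The algebraic bridge is the identity $\anought^*\dnought^* = \bnought$, which follows by direct inspection from \eqref{eq:defn_nought} and \eqref{eq:defn_noughtstar}; this is exactly the factor that converts the TASEP height-function scale $2\dnought^* N^{1/3}$ into the LPP scale $\bnought N^{1/3}$ across a diagonal shift.

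First I will use \eqref{eq:relation_TASEP_LPP}, together with the linear interpolation convention \eqref{eq:non_integer_indices} to absorb integer rounding and parity, to identify the event on the left-hand side of \eqref{ea:ineq_main} with $\{G^*_Q(L^*_N) \leq \anought^* N\}$, where $Q = \big(N + \sigma\cnought N^{2/3} - \dnought^* N^{1/3} x,\, N - \sigma\cnought N^{2/3} - \dnought^* N^{1/3} x\big)$. The point $Q$ is the diagonal shift of the reference endpoint $P := (N + [\sigma\cnought N^{2/3}], N - [\sigma\cnought N^{2/3}])$ by $-\dnought^* N^{1/3} x\,(1,1)$, up to $\bigO(1)$ rounding. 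Next I will invoke the direct analog of Theorem \ref{thm:main} for $G^*$: uniformly in $\ell, L^*_N$ satisfying $\mathrm{Hyp}^*$,
\[
\Prob\bigg(\frac{G^*_P(L^*_N) - \anought^* N}{\bnought N^{1/3}} \leq x'\bigg) \longrightarrow \Prob\bigg(\max_{s \in (a_\infty,b_\infty)}\big(\AiryA(s) - (s-\sigma)^2 + \ell(s)\big) \leq x'\bigg).
\]
This analog is obtained by adapting the proof of Theorem \ref{thm:main} verbatim, simply replacing $\anought, \dnought, \Leq, D$ by their starred counterparts; the two key inputs (uniform slow decorrelation from Theorem \ref{thm:uniform_slow_decorr} and Gibbs regularity from Theorem \ref{lem:PNG_max}) transfer to $G^*$ without change, since $G^*$ has the same fluctuations as $G$ and differs only through the deterministic weight shift $w^* = w + 1$.

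The final step is to bridge the shift from $Q$ to $P$ at the level of the LPP itself. Path concatenation gives $G^*_P(L^*_N) \geq G^*_Q(L^*_N) + G^*(Q\to P)$, and the LPP across the $\bigO(N^{1/3})\times\bigO(N^{1/3})$ mini-box $Q\to P$ concentrates at $\anought^* \dnought^* N^{1/3} x = \bnought N^{1/3} x$ with fluctuations of order $N^{1/9}$, tail-controlled by Proposition \ref{prop:previous_pt_to_pt}\ref{enu:prop:previous_pt_to_pt_2} and Lemma \ref{lem:weaker_technical}. The matching reverse inequality requires uniform slow decorrelation (Theorem \ref{thm:uniform_slow_decorr}), which forces the geodesic from $L^*_N$ to $P$ to stay transversally within $o(N^{2/3})$ of the diagonal ray through $P$ and hence pass within a negligible neighborhood of $Q$. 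Together these yield $G^*_Q(L^*_N) = G^*_P(L^*_N) - \bnought N^{1/3} x + o_P(N^{1/3})$, so $\{G^*_Q(L^*_N) \leq \anought^* N\}$ is equivalent up to a vanishing error to $\{(G^*_P(L^*_N) - \anought^* N)/(\bnought N^{1/3}) \leq x\}$, whose asymptotic probability is the right-hand side of \eqref{ea:ineq_main}. The main obstacle is making this shift relation quantitatively uniform in $\ell$ and $L^*_N$ satisfying $\mathrm{Hyp}^*$ and in $x \in \realR$: for bounded $|x|$ this is precisely what the uniform form of Theorem \ref{thm:uniform_slow_decorr} supplies, and for large $|x|$ one argues separately that both sides of \eqref{ea:ineq_main} are $\bigO(\epsilon)$-close to $0$ or $1$ via the tail bounds already cited.
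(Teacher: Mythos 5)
Your overall architecture matches the paper's terse proof in Section~3.2: convert the TASEP event to an $L^*_N$-to-point LPP event via \eqref{eq:relation_TASEP_LPP}, then appeal to Theorem~\ref{thm:main} (through the deterministic $G/G^*$ relation) to compute the limit of $\Prob\big(G^*_Q(L^*_N)\leq[\anought^*N]\big)$ with $Q=(N+\sigma\cnought N^{2/3}-\dnought^* N^{1/3}x,\,N-\sigma\cnought N^{2/3}-\dnought^* N^{1/3}x)$. The identity $\anought^*\dnought^*=\bnought$ you single out is indeed the exact bookkeeping needed.

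Where you diverge is in how you handle the diagonal shift $Q=P-\dnought^*N^{1/3}x\,(1,1)$. You propose a sandwich: a superadditivity lower bound $G^*_P(L^*_N)\geq G^*_Q(L^*_N)+G^*(Q\to P)$ plus concentration of the mini-box LPP, and for the matching upper bound you invoke ``uniform slow decorrelation forces the geodesic from $L^*_N$ to $P$ to stay transversally within $o(N^{2/3})$ of the diagonal ray through $P$.'' That last step is not a valid use of Theorem~\ref{thm:uniform_slow_decorr}: the theorem controls the difference of LPP \emph{values} $\tilde H_N(s)-H_N(s)$ uniformly in $s$; it says nothing about where the geodesic lives. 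Geodesic transversal localization is a genuinely different (and harder) statement, and nothing in the paper's toolkit supplies it. One can repair your argument by applying uniform slow decorrelation to the reversed point-to-curve LPP from $L^*_N$ to $P$ versus to $Q$ directly and taking maxima, but as written the step is a gap.

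There is a much shorter route, and it is what ``readily follows from the relation between $G$ and $G^*$ as well as Theorem~\ref{thm:main}'' is pointing at: use translational invariance of the i.i.d.\ weight field. Translating by the (nearly-integral) diagonal vector $\dnought^*N^{1/3}x\,(1,1)$ sends $Q$ to $P$ and sends $L^*_N$ to a curve that still satisfies $\mathrm{Hyp}^*$ with the same constants, but with $\ell$ replaced by $\ell-x$ (since in the central parameterization $u+v=-2(\ell+l_N)\dnought^*N^{1/3}$). Applying the $G^*$ analog of Theorem~\ref{thm:main} to the translated curve at threshold $0$ yields $\Prob\big(\max_s(\AiryA(s)-(s-\sigma)^2+\ell(s)-x)\leq 0\big)=\Prob\big(\max_s(\AiryA(s)-(s-\sigma)^2+\ell(s))\leq x\big)$, which is the desired right-hand side. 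No concentration estimate for a mini-box LPP and no geodesic argument is needed; the $O(1)$ rounding in both the shift and $[\anought^*N]$ is absorbed by the continuity of the limit distribution (local Brownian absolute continuity, Proposition~\ref{prop:Airy_property}\ref{enu:prop:Airy_property:a}), exactly as in the microscopic estimate of Section~\ref{sec:micro_window}.
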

This result is proved in Section \ref{sec:proof_thm:TASEP} as a straight-forward consequence of Theorem \ref{thm:main}.

\begin{rmk}
Though the above result is stated for TASEP, the hypothesis is for the associated last passage percolation model. Let us unfold what this means in terms of the TASEP height function initial data. The height function initial data is given by the function $s\mapsto h_N(s;0)$. The first part of the hypothesis regards the limiting behavior of the central part of this function. In other words, the function $s\mapsto N^{-1/3} h_N(N^{2/3} s;0)$ should have a limit (related to the function $\ell(s)$ which is assumed not to grow too quickly with $|s|$). Outside of a large window around the origin, this scaled height function initial data should not go to $-\infty$ too quickly (or else it may influence the later time behavior around the origin). The second part of the hypothesis describes a growth condition which ensures this.
\end{rmk}

Below we list several typical initial conditions of TASEP, and their initial height functions. We characterize the initial height function $h(s; 0)$ only at integer-valued $s$.  Note that all the initial conditions are $N$-independent. Theorem \ref{thm:main_TASEP} covers more general, $N$-dependent initial conditions, for example, periodic initial conditions with period $\bigO(N^{2/3})$.
\begin{itemize}
\item (Step initial condition)
  Initially all negative sites are occupied and all non-negative sites are empty, \ie,
  \begin{equation}
    h^{\step}(s; 0) = \lvert s \rvert.
  \end{equation}
\item (Flat initial condition)
  Initially all even sites are occupied and all odd sites are empty, \ie,
  \begin{equation} \label{eq:initial_flat}
    h^{\flatinit}(s; 0) =
    \begin{cases}
      0 & \text{if $s = 0, \pm 2, \pm 4, \dotsc$}, \\
      -1 & \text{if $s = \pm 1, \pm 3, \dotsc$}, \\
    \end{cases}
  \end{equation}

\item (Brownian/Bernoulli initial condition)
  Initially all sites are independently occupied with probability $\frac{1}{2}$ and empty with probability $1/2$, \ie,
  \begin{equation} \label{eq:initial_Bernoulli}
    h^{\Bernoulli}(s; 0) =
    \begin{cases}
      0 & \text{if $s = 0$}. \\
      \sum^{s - 1}_{i = 0} w_i & \text{if $s = 1, 2, \dotsc$}, \\
      -\sum^{-1}_{i = -s} w_i & \text{if $s = -1, -2, \dotsc$}, \\
    \end{cases}
  \end{equation}
  and $w_i$ are \iid\ Bernoulli random variables such that $\Prob(w_i = 1) = 1/2$ and $\Prob(w_i = -1) = 1/2$.

\item (Wedge-flat initial condition)
  Initially all negative sites and all even sites are occupied, but all positive odd sites are empty, \ie,
  \begin{equation} \label{eq:initial_Wedge_flat}
    h^{\wedgeflat}(s; 0) =
    \begin{cases}
      h^{\flatinit}(s; 0) & \text{if $s \geq 0$}, \\
      h^{\step}(s; 0) & \text{if $s < 0$}.
    \end{cases}
  \end{equation}

\item (Wedge-Bernoulli initial condition)
  Initially all negative sites are occupied, and all non-negative sites are independently occupied with probability $1/2$ and empty with probability $1/2$, \ie,
  \begin{equation} \label{eq:initial_Wedge_Bernoulli}
    h^{\wedgeBernoulli}(s; 0) =
    \begin{cases}
      h^{\Bernoulli}(s; 0) & \text{if $s \geq 0$}, \\
      h^{\step}(s; 0) & \text{if $s < 0$}.
    \end{cases}
  \end{equation}

\item (Flat-Bernoulli initial condition)
  Initially all even negative sites are occupied, all odd negative sites are empty, and all non-negative sites are independently occupied with  probability $1/2$ and empty with probability $1/2$, \ie,
  \begin{equation} \label{eq:initial_flat_Bernoulli}
    h^{\flatBernoulli}(s; 0) =
    \begin{cases}
      h^{\Bernoulli}(s; 0) & \text{if $s \geq 0$}, \\
      h^{\flatinit}(s; 0) & \text{if $s < 0$}.
    \end{cases}
  \end{equation}
\end{itemize}

As consequences of Theorem \ref{thm:main_TASEP} we can prove variational formulas for one-point distributions of TASEP started from initial data as in \eqref{eq:initial_flat}, \eqref{eq:initial_Bernoulli} \eqref{eq:initial_Wedge_flat}, \eqref{eq:initial_Wedge_Bernoulli} and \eqref{eq:initial_flat_Bernoulli}. To state the results in a uniform way, we denote the two-sided Brownian motion $\B(s)$ by
  \begin{equation} \label{eq:defn_two_sided_BM}
    \B(s) =
    \begin{cases}
      \B_+(s) & \text{if $s \geq 0$}, \\
      \B_-(-s) & \text{if $s \leq 0$},
    \end{cases}
  \end{equation}
  where $\B_+(s)$ and $\B_-(s)$ are independent standard Brownian motions starting at $0$.
\begin{cor} \label{thm:Bernoulli_initial_condition}
  Let $\sigma$ be a real constant and $h(s; t)$ be the height function of the TASEP.
  \begin{enumerate}[label=(\alph*)]
  \item
    With the flat initial condition \eqref{eq:initial_flat},
    \begin{equation}
      \lim_{N \to \infty} \Prob \left( \frac{h^{\flatinit}(2\sigma \cnought N^{2/3}; \anought^* N) - 2N}{2\dnought^* N^{1/3}} > -x \right) = \Prob \left( \max_{s \in \realR} \left( \AiryA(s) - s^2 \right) < x \right).
    \end{equation}
  \item \label{enu:thm:Bernoulli_initial_condition:a}
    With the Bernoulli initial condition \eqref{eq:initial_Bernoulli}, 
    \begin{equation} \label{eq:Bernoulli_init_result}
      \lim_{N \to \infty} \Prob \left( \frac{h^{\Bernoulli}(2\sigma \cnought N^{2/3}; \anought^* N) - 2N}{2\dnought^* N^{1/3}} > -x \right) = \Prob \left( \max_{s \in \realR} \left( \AiryA(s) - (s - \sigma)^2 + \sqrt{2} q^{-1/4} \B(s) \right) < x \right).
    \end{equation}
  \item \label{enu:thm:Bernoulli_initial_condition:b}
    With the Wedge-flat initial condition \eqref{eq:initial_Wedge_flat},
    \begin{equation} \label{eq:wedge_flat_init_result}
      \lim_{N \to \infty} \Prob \left( \frac{h^{\wedgeflat}(2\sigma \cnought N^{2/3}; \anought^* N) - 2N}{2\dnought^* N^{1/3}} > -x \right) = \Prob \left( \max_{s \le \sigma} \left( \AiryA(s) - s^2 \right) < x \right).
    \end{equation}
  \item \label{enu:thm:Bernoulli_initial_condition:c}
    With the Wedge-Bernoulli initial condition \eqref{eq:initial_Wedge_Bernoulli},
    \begin{multline} \label{eq:wedge_Bernoulli_init_result}
      \lim_{N \to \infty} \Prob \left( \frac{h^{\wedgeBernoulli}(2\sigma \cnought N^{2/3}; \anought^* N) - 2N}{2\dnought^* N^{1/3}} > -x \right) \\
      = \Prob \left( \max_{s \ge 0} \left( \AiryA(s) - (s-\sigma)^2 +\sqrt{2} q^{-1/4} \B(s) \right) < x \right).
    \end{multline}
  \item \label{enu:thm:Bernoulli_initial_condition:d}
    With the Flat-Bernoulli condition \eqref{eq:initial_flat_Bernoulli},
    \begin{multline} \label{eq:flat_Bernoulli_init_result}
      \lim_{N \to \infty} \Prob \left( \frac{h^{\flatBernoulli}(2\sigma \cnought N^{2/3}; \anought^* N) - 2N}{2\dnought^* N^{1/3}} > -x \right) = \\
      \Prob \left( \max_{s \in \realR} \left( \AiryA(s) - (s-\sigma)^2 +\sqrt{2} q^{-1/4} \chi_{s\ge 0} \B(s) \right) < x \right).
    \end{multline}
  \end{enumerate}
\end{cor}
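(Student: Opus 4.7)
The plan is to reduce each case to a direct application of Theorem \ref{thm:main_TASEP} (and the random-$\ell$ extension described in the remark following Theorem \ref{thm:main}), by identifying, for each initial condition, the limiting profile $\ell(s)$, the range $(a_\infty, b_\infty)$, and verifying hypothesis $\mathrm{Hyp}^*$ for the associated polygonal chain $L_N^*$ defined from $h_N(\,\cdot\,;0)$ via \eqref{eq:L_by_h}.

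For part (a), the flat initial height function is uniformly bounded ($h^{\flatinit}(s;0)\in\{-1,0\}$), so $L_N^*$ lies $O(1)$ from the anti-diagonal. Taking $\ell\equiv 0$, $a_\infty=-\infty$, $b_\infty=+\infty$, hypothesis $\mathrm{Hyp}^*$ is trivial. Theorem \ref{thm:main_TASEP} yields $\Prob(\max_{s\in\realR}(\AiryA(s)-(s-\sigma)^2)<x)$, which equals the claimed right-hand side by stationarity of $\AiryA$.

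For the Bernoulli-type cases (b), (d), (e), set $\ell_N(s)=-h^{\Bernoulli}(2s\cnought N^{2/3};0)/(2\dnought^* N^{1/3})$ on the side(s) of the origin where the initial data is Bernoulli. A direct computation from \eqref{eq:defn_nought} and \eqref{eq:defn_noughtstar} gives $\sqrt{2\cnought}/(2\dnought^*)=\sqrt{2}\,q^{-1/4}$, so Donsker's invariance principle yields convergence of $\ell_N$ on bounded intervals to $\sqrt{2}\,q^{-1/4}\,\B(s)$ (the sign absorbed by reflection symmetry of Brownian motion). The quadratic upper bound $|\B(s)|<C+c_1 s^2$ holds almost surely by the law of iterated logarithm (with $c_1$ as small as we wish), and an analogous moderate-deviation estimate for the simple random walk gives the uniform-in-$N$ bound on $\ell_N$ needed by the random-$\ell$ extension of Theorem \ref{thm:main_TASEP}. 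Applying the theorem with $(a_\infty,b_\infty)=(-\infty,+\infty)$, $(0,+\infty)$, $(-\infty,+\infty)$ respectively (and with $\ell(s)$ set to $0$ on the flat/wedge side in parts (e), (d)) produces \eqref{eq:Bernoulli_init_result}, \eqref{eq:wedge_Bernoulli_init_result}, and \eqref{eq:flat_Bernoulli_init_result}.

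For the wedge portions arising in parts (c) and (d), the polygonal chain from $h^{\step}$ runs along the positive $y$-axis of the LPP grid, at macroscopic distance from the limit shape $\Leq^*$, so the non-central requirements \eqref{eq:region_D}--\eqref{eq:L_N_outside} are satisfied and the central part has $a_\infty=0$. For (c) this yields $\Prob(\max_{s\geq 0}(\AiryA(s)-(s-\sigma)^2)<x)$, and the change of variables $s\mapsto\sigma-s$ combined with the stationarity and reflection symmetry $\AiryA(-\cdot)\stackrel{d}{=}\AiryA(\cdot)$ of the Airy process converts this to the stated \eqref{eq:wedge_flat_init_result}. The main technical obstacle is establishing the uniform-in-$N$ quadratic upper bound for the scaled random walk $\ell_N$ in the Bernoulli cases, sharp enough to invoke the random-$\ell$ extension of Theorem \ref{thm:main_TASEP}: this combines Donsker's theorem on bounded intervals with classical moderate-deviation estimates for simple random walks to control tails simultaneously for all large $N$ with probability at least $1-\epsilon$.
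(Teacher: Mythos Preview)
Your proposal is correct and follows essentially the same route as the paper: reduce each case to Theorem \ref{thm:main_TASEP} (with the random-$\ell$ extension) by verifying $\mathrm{Hyp}^*$, using random-walk parabolic bounds for the Bernoulli pieces and Donsker's theorem to identify the limit, with the same computation $\sqrt{2\cnought}/(2\dnought^*)=\sqrt{2}\,q^{-1/4}$. The paper only writes out part (e) in detail and declares the others analogous; your explicit change of variables $s\mapsto\sigma-s$ together with stationarity and reflection symmetry of $\AiryA$ for part (c) is exactly what is implicitly needed there.
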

\begin{rmk}
  The result for the flat initial condition \eqref{eq:initial_flat} is obtained in \cite{Johansson03} and is given, in an equivalent form, in Proposition \ref{prop:Johansson_max_Ai} in the case that $\sigma = 0$. Since the flat initial condition is translational invariant, the result holds for general $\sigma$. The step initial condition is singular in the sense that $K_1 = K_2 = 0$ in \eqref{eq:L_by_h} and hence $a_N = b_N = 0$ and the interval $(a_{\infty}, b_{\infty})$ degenerates a point $\{ 0 \}$. The result, which is stated in Proposition \ref{prop:previous_pt_to_pt}\ref{enu:prop:previous_pt_to_pt_3}, actually is used in the proof of Theorem \ref{thm:main_TASEP} (and Theorem \ref{thm:main}), so we do not list it as a corollary.
\end{rmk}

\begin{rmk}
For discrete time parallel update TASEP, Bernoulli initial data is {\it not} stationary in time. The stationary measure is not a product measure. This explains why besides in the limiting case when $q=1$, there is no known (simple) formula in the literature of the right-hand side of (\ref{eq:Bernoulli_init_result}).
\end{rmk}

Comparing the results in Corollary \ref{thm:Bernoulli_initial_condition} with the asymptotics of $h(s; t)$ obtained in continuous TASEP models (see \cite{Baik-Ferrari-Peche10}, \cite{Borodin-Ferrari-Sasamoto08a}, \cite{Baik-Ben_Arous-Peche05}, \cite{Borodin-Ferrari-Sasamoto09} for details) that corresponds to the $q \to 1_-$ limit of the discrete TASEP model considered in this paper, one expects the following results:
\begin{align}
  \Prob \left( \max_{t \in \realR} \left( \AiryA(s) - (s - \sigma)^2 \right) < x \right) = {}& \Prob(2^{1/3} \AiryA_1(2^{-2/3} \sigma) < x), \label{eq:flat_limit} \\
  \Prob \left( \max_{t \in \realR} \left( \AiryA(s) - (s - \sigma)^2 + \sqrt{2}\B(s) \right) < x \right) = {}& \Prob(\AiryA_{\stat}(\sigma) < x), \label{eq:Bernoulli_limit} \\
  \Prob \left( \max_{s \ge 0} \left( \AiryA(s) - (s -  \sigma)^2 \right) < x \right) = {}& \Prob(\AiryA_{2 \to 1}(\sigma) < x + \sigma^2 \chi_{\sigma < 0}), \label{eq:step_flat_limit} \\
  \Prob \left( \max_{s \ge 0} \left( \AiryA(s) - (s-\sigma)^2 +\sqrt{2} \B(s) \right) < x \right) = {}& \Prob(\AiryA_{\BM \to 2}(-\sigma) < x + \sigma^2), \label{eq:step_Bernoulli_limit} \\
  \Prob \left( \max_{s \in \realR} \left( \AiryA(s) - (s-\sigma)^2 +\sqrt{2} \chi_{s\ge 0} \B(s) \right) < x \right) = {}& \Prob(\AiryA_{2 \to 1, 1, 0}(-\sigma) < x + \sigma^2 \chi_{\sigma > 0}). \label{eq:flat_Bernoulli_limit}
\end{align}
(It may be possible to prove these continuous analogs in a similar manner as done here, but we do not pursue this presently, see Remark \ref{rmk:continuous_limit}.)

Below are explanations of notations:
\begin{itemize}
\item
  In \eqref{eq:flat_limit}, $\AiryA_1$ stands for the Airy process with flat initial data, defined in \cite{Sasamoto05} and \cite[Formulas (1.4) and (1.5)]{Borodin-Ferrari-Prahofer-Sasamoto07}. The $\AiryA_1$ process is stationary, and its $1$-dimensional distribution is \cite{Ferrari-Spohn05}
  \begin{equation}
    \Prob(\AiryA_1(\sigma) < x) = F_{\GOE}(2x),
  \end{equation}
  where $F_{\GOE}$ is the GOE Tracy-Widom  distribution \cite{Tracy-Widom96}.
\item
  In \eqref{eq:Bernoulli_limit}, $\AiryA_{\stat}$ stands for the Airy process with stationary initial data, defined in \cite{Baik-Ferrari-Peche10}, and we follow the notation in \cite[Section 1.11]{Quastel12} and \cite[Section 1.2]{Quastel-Remenik13}. The $1$-dimensional distribution of $\AiryA_{\stat}(\sigma)$ appears also in literature as (see \cite[Remark 1.3]{Baik-Ferrari-Peche10} and \cite[Appendix A]{Ferrari-Spohn06})
\begin{equation}
  \Prob(\AiryA_{\stat}(\sigma) < x) = F_{\sigma}(x) = H \left( x + \sigma^2; \frac{\sigma}{2}, -\frac{\sigma}{2} \right),
\end{equation}
where $F_{\sigma}(x)$ is defined in \cite[Formula (1.20)]{Ferrari-Spohn06} and $H(x;w_+,w_-)$ is defined in \cite[Definition 3]{Baik-Rains00}.
\item
  In \eqref{eq:step_flat_limit}, the transition process $\AiryA_{2 \to 1}$ interpolating the $\AiryA_2$ and $\AiryA_1$ processes is introduced in \cite[Definition 2.1]{Borodin-Ferrari-Sasamoto08a} (see also \cite[Formula (1.7)]{Quastel-Remenik13a}, where the notation for the right-hand side of \eqref{eq:step_flat_limit} is $G_{\sigma}^{2\to 1}(x+\sigma^2\chi_{\sigma<0})$).
\item
  In \eqref{eq:step_Bernoulli_limit}, the transition process $\AiryA_{\BM \to 2}$ interpolating the Brownian motion and $\AiryA_2$ process is introduced in \cite[Formula (3.6)]{Imamura-Sasamoto04}, see also \cite[Definition 2.13]{Corwin-Ferrari-Peche10}. The $1$-dimensional distribution of $\AiryA_{\BM \to 2}(\sigma)$ was conjectured in \cite{Prahofer-Spohn02a} and proved in \cite{Ben_Arous-Corwin11} to be
\begin{equation}
  \Prob(\AiryA_{\BM \to 2}(\sigma) < x) = F_1(x; \sigma),
\end{equation}
where the distribution function $F_1$ is introduced in \cite[Definition 1.3]{Baik-Ben_Arous-Peche05}.
\item
  In formula \eqref{eq:flat_Bernoulli_limit}, The transition process $\AiryA_{2 \to 1, 1, 0}$ interpolating the Brownian motion and the $\AiryA_1$ process is introduced in \cite[Definition 18]{Borodin-Ferrari-Sasamoto09}. It is defined from the TASEP with one slow particle, and it is related to the TASEP with flat-Bernoulli initial condition via Burke's theorem, as explained in \cite{Borodin-Ferrari-Sasamoto09}.
\end{itemize}

Among formulas \eqref{eq:flat_limit}, \eqref{eq:Bernoulli_limit}, \eqref{eq:step_flat_limit}, \eqref{eq:step_Bernoulli_limit} and \eqref{eq:flat_Bernoulli_limit}, \eqref{eq:flat_limit} is proved in \cite{Johansson03}, and then proved in a direct way in \cite{Corwin-Quastel-Remenik11}. Formula \eqref{eq:step_flat_limit} is proved in \cite{Quastel-Remenik13a}. Formulas \eqref{eq:Bernoulli_limit}, \eqref{eq:step_Bernoulli_limit} and \eqref{eq:flat_Bernoulli_limit} are conjectured in \cite[Section 1.4]{Quastel-Remenik13}. Note that in \cite[Section 1.4]{Quastel-Remenik13}, the notations $\AiryA_{1 \to BM}$ and $\AiryA_{2 \to BM}$ are described but not precisely defined. From the context we figure out that
\begin{equation}
  \AiryA_{2 \to \BM}(\sigma) = \AiryA_{\BM \to 2}(-\sigma) - \sigma^2 \chi_{\sigma > 0}, \quad \AiryA_{1 \to \BM}(\sigma) = \AiryA_{2 \to 1, 1, 0}(-\sigma) - \sigma^2 \chi_{\sigma > 0}.
\end{equation}
Formulas \eqref{eq:Bernoulli_limit} and \eqref{eq:step_Bernoulli_limit} are special cases of Corollary \ref{cor:special_cases_inhomogeneous} (c) with $w_+=-w_-=\frac{\sigma}{2}$ and (a) with $k=1$  in Section \ref{Sec.LPP_inhomogeneous}. And our argument in this paper is a strong support to the conjectural formula \eqref{eq:flat_Bernoulli_limit}.

\begin{rmk} \label{rmk:continuous_limit}
  The method in our study of the discrete time TASEP, if applied on the continuous time TASEP, that is, the $q \to 1_-$ limit of the discrete time one, yields the counterparts of \eqref{eq:Bernoulli_init_result}, \eqref{eq:wedge_flat_init_result}, \eqref{eq:wedge_Bernoulli_init_result} and \eqref{eq:flat_Bernoulli_init_result} with $q = 1$, and then the formulas \eqref{eq:flat_limit}, \eqref{eq:Bernoulli_limit}, \eqref{eq:step_flat_limit}, \eqref{eq:step_Bernoulli_limit} and \eqref{eq:flat_Bernoulli_limit} are derived directly. The only technical obstacle in the application of our method in the continuous time TASEP is that the counterpart of Proposition \ref{prop:Johansson_max_Ai}, where the discrete geometric distribution of $w(i, j)$ is replaced by the continuous exponential distribution is not available in literature. We remark that the counterpart of Proposition \ref{prop:Johansson_max_Ai} can also be proved by the method in \cite{Johansson03}.
\end{rmk}

\subsection{LPP with inhomogeneous parameter geometric weight distributions}
\label{Sec.LPP_inhomogeneous}
In this subsection we consider the point-to-point LPP on a $\intZ^2$ lattice where the weights on sites are in independent geometric distribution, but with nonidentical parameters. The strategy is to express the point-to-point LPP with respect to these weights by point-to-curve LPP with respect to homogeneous weight as considered in Section \ref{subsec:LPP_on_Z*Z}.

Let $L$ be the vertical path (depending on $N$ which we suppress)
\begin{equation}
  L := \{ (0, y) \mid y \in D_N \}, \quad \text{where $D_N$ is an interval on $\realR$.}
\end{equation}
We are most interested in the case that $D_N = \realR$. But the LPP $G_{(N, N)}(L)$ is not well defined in this case, since $G_{(N, N)}(0, y) \to +\infty$ almost surely as $y \to -\infty$. We consider a modified LPP  
\begin{equation}
  G^{f_N}_{(N, N)}(L) = \max_{y \in D_N}\,\left( G_{(N, N)}(0, y) - f_N(y)\right)
\end{equation}
where $f_N: D_N \to \realR$ is a function where $D_N$, the domain of $f_N$, is an interval. This modified LPP $G^{f_N}_{(N, N)}(L)$ is well defined for $D_N = \realR$ if $f_N(x) \to +\infty$ fast enough as $x \to -\infty$.

By Proposition \ref{prop:previous_pt_to_pt}, for $y = cN$ where $c$ is in a compact subset of $(-\infty, 1)$, if $f_N(y) = a_0(1 - y/N)N$, then $G_{(N, N)}(0, y) = o(N)$ with high probability. So if $f_N(y)$ is close to $\anought N = a_0(1)N$ for $y$ around $0$, and otherwise greater than $a_0(1 - y/N)N$ for all $y < N$, then $G^{f_N}_{(N, N)}(L)$ is $o(N)$ and the value of $y$ such that $G_{(N, N)}(0, y) - f_N(y)$ attains its maximum is in the vicinity of $0$ with high probability. To make the idea above precise, we state a technical hypothesis for $f_N$ analogous to the hypotheses of Definition \ref{thmhypo}.

Before stating the technical hypotheses, we consider another similar question regarding putting a function on the $\llcorner$-shaped path
\begin{equation} \label{eq:L_shaped_L_tilde}
  \tilde{L} = \big\{ (0, y) \mid y \geq 0 \} \cup \{ (x, 0) \mid x \geq 0 \big\}.
\end{equation}
The random variable $G_{(N, N)}(\tilde{L})$ is well defined and equivalent to the point-to-point last passage time $G_{(N, N)}(0, 0)$. If, for $D_N=[-N,N]$, $\tilde{f}_N:D_N \to \realR$ is a continuous function such that $\tilde{f}_N(x)$ increases suitably fast as $\lvert x \rvert$ increases, then the modified LPP
\begin{equation}
  G^{\tilde{f}_N}_{(N, N)}(\tilde{L}) = \max \left( \max_{y \geq 0} \left(G_{(N, N)}(0, y) - \tilde{f}_N(y)\right),\ \max_{x \geq 0} \left(G_{(N, N)}(x, 0) - \tilde{f}_N(-x)\right) \right)
\end{equation}
has a nontrivial limit like we observed for $G^{f_N}_{(N, N)}(L)$. The function $\tilde{f}_N$ is serving as a boundary condition for the last passage problem on the positive coordinate axes. We turn now to the hypotheses needed to state a fluctuation theorem regarding these modified LPP problems.

\begin{definition} \label{inhomogeneous_hypo}
Consider constants
\begin{equation}
C>0,\quad c_1\in (0,1),\quad c_2\in (0,1/3),\quad c_3>0,\quad a_{\infty} \in \{ -\infty \} \cup \realR, \quad b_{\infty} \in \{ +\infty \} \cup \realR,
\end{equation}
and a sequence $\{m_N\}_{N\geq 1}\subset \realR_{+}$ converging to zero as $N$ goes to infinity.

We say that a continuous function $\ell:\realR\to \realR$ and a sequence of functions $f_N:D_N\to \realR$ satisfy $\mathrm{Hyp}^{\vert}\big(C,c_1,c_2,c_3,a_{\infty},b_{\infty},\{m_N\}\big)$ if the following properties hold:
\begin{enumerate}
\item The function $\ell(s)$ satisfies the bound for $s\in \realR$ that
\begin{equation}
  \ell(s) < C + c_1 s^2.
\end{equation}
\item There is a sequence of intervals $I_N = (a_N, b_N) \subset (-N^{c_2},N^{c_2})$ converging to $(a_{\infty},b_{\infty})$ such that
     \begin{equation} \label{eq:f_N_char}
      f_N(2s \cnought N^{2/3}) = \anought N - s\anought\cnought N^{2/3} - (\ell(s) + l_N(s)) \dnought N^{1/3},
    \end{equation}
    where $l_N(s): I_N \to \realR$ is some continuous function with $\max_{s \in I_N}\  \lvert l_N(s) \rvert \leq m_N$ and $\lim_{N \to \infty} m_N = 0$,
    and $\anought, \dnought$ are defined in \eqref{eq:defn_nought}.
\item  For all $y \in D_N$ such that $yN^{1/3}/(2\cnought)\in (-\infty, N^{1/3}/(2\cnought)]\setminus I_N$, $f_N(yN)$ satisfies the inequality 
    \begin{equation} \label{eq:outer_f_N^max}
      \frac{f_N(yN)}{N} >
        \max \left( \anought - \frac{\anought y}{2} - c_1 \dnought \left( \frac{y}{2\cnought} \right)^2,\ a_0(1 - y) + c_3 \lvert y \rvert \right).  
    \end{equation}
\end{enumerate}

Let us also define a second hypothesis. Assume $C,c_1,c_2,c_3,a_{\infty},b_{\infty},\{m_N\}$ are as above and that $D_N=[-N,N]$. We say that $\ell$ and the sequence $\tilde{f}_N:D_N\to \realR$ satisfy $\mathrm{Hyp}^{\llcorner}\big(C,c_1,c_2,c_3,a_{\infty},b_{\infty},\{m_N\}\big)$ if the above conditions hold with (\ref{eq:f_N_char}) replaced by
\begin{equation}
      \tilde{f}_N(2s \cnought N^{2/3}) = \anought N - \lvert s\rvert\anought\cnought N^{2/3} - (\ell(s) + l_N(s)) \dnought N^{1/3},
\end{equation}
and (\ref{eq:outer_f_N^max}) replaced by
\begin{equation}
      \frac{\tilde{f}_N(yN)}{N} > \max \left( \anought N - \frac{\anought \lvert y \rvert}{2} - c_1 \dnought \left( \frac{y}{2\cnought} \right)^2,\ a_0(1 - \lvert y \rvert) + c_3 \lvert y \rvert \right).
\end{equation}
\end{definition}

\begin{thm} \label{thm:inhomogeneous}
Fix constants $C, c_1, c_2, c_3, a_{\infty}, b_{\infty}$, a sequence $\{m_N\}_{N\geq 1}$ as in Definition \ref{inhomogeneous_hypo}. Then for all $\epsilon>0$ there exists $N_0$ such that for all continuous functions $\ell:\realR\to \realR$ and sequences of functions $f_N:D_N\to \realR$ satisfying $\mathrm{Hyp}^{\vert}\big(C,c_1,c_2,c_3,a_{\infty},b_{\infty},\{m_N\}\big)$, and for all $N>N_0$ and $x\in \realR$,
 \begin{equation}
    \left\lvert \Prob \left( \frac{G^{f_N}_{(N, N)}(L)}{\bnought N^{\frac{1}{3}}} < x \right) - \Prob \left( \max_{s \in (a_\infty,b_\infty)} \left( \AiryA(s) - s^2 + \ell(s) \right) < x \right) \right\rvert < \epsilon.
  \end{equation}
With $D_N=[-N,N]$, the sequence $f_N$ replaced by $\tilde{f}_N$, the hypothesis $\mathrm{Hyp}^{\vert}\big(C,c_1,c_2,c_3,a_{\infty},b_{\infty},\{m_N\}\big)$ replaced by $\mathrm{Hyp}^{\llcorner}\big(C,c_1,c_2,c_3,a_{\infty},b_{\infty},\{m_N\}\big)$, and the LPP problem $G^{f_N}_{(N, N)}(L)$ replaced by $G^{\tilde{f}_N}_{(N, N)}(\tilde{L})$ the above limiting result likewise holds true.
\end{thm}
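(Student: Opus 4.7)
The plan is to reduce Theorem \ref{thm:inhomogeneous} to Theorem \ref{thm:main}. Given a sequence $f_N$ (resp.\ $\tilde{f}_N$) satisfying $\mathrm{Hyp}^{\vert}$ (resp.\ $\mathrm{Hyp}^{\llcorner}$), I will construct an auxiliary sequence of down-right lattice paths $L_N^{\dagger}$ satisfying $\mathrm{Hyp}$ from Definition \ref{thmhypo} with the same profile $\ell$, and show that $G^{f_N}_{(N,N)}(L) - \anought N$ is well-approximated by $G_{(N,N)}(L_N^{\dagger}) - \anought N$ with high probability. The central observation driving this reduction is that the linear-in-$s$ term $-s\anought\cnought N^{2/3}$ appearing in the hypothesis is precisely the leading deterministic drift of $G_{(N,N)}(0, 2s\cnought N^{2/3})$: Taylor expansion of the law-of-large-numbers profile from Proposition \ref{prop:previous_pt_to_pt}\ref{enu:prop:previous_pt_to_pt_1}, combined with the identities $a_0'(1) = \anought/2$ and $2\cnought^2 a_0''(1) = -\bnought$, yields
\begin{equation*}
a_0\bigl(N/(N-y)\bigr)(N-y) = \anought N - s\anought\cnought N^{2/3} - \bnought s^2 N^{1/3} + O(1) \quad \text{for } y = 2s\cnought N^{2/3}.
\end{equation*}
The drift subtracted by $f_N$ thus exactly cancels the corresponding term in the deterministic part of $G_{(N,N)}(0, y)$, leaving a quadratic deficit $-\bnought s^2 N^{1/3}$ which is precisely the parabola that appears in the target variational problem.

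By the uniform slow decorrelation of Theorem \ref{thm:uniform_slow_decorr}, one can translate the starting point $(0, 2s\cnought N^{2/3})$ along the characteristic direction to a point on a near-antidiagonal curve without altering the $N^{1/3}$-scale fluctuations to leading order. This is the mechanism by which $L_N^{\dagger}$ is built: for each $s \in I_N$, shift $(0, 2s\cnought N^{2/3})$ to a lattice point whose $G^{\text{det}}$ value matches $G^{\text{det}}(0, 2s\cnought N^{2/3}) - f_N(2s\cnought N^{2/3}) + \anought N$. The resulting $L_N^{\dagger}$ can be arranged to be a down-right lattice path whose central part has the parametrization required by $\mathrm{Hyp}$ with profile $\ell$, so that applying Theorem \ref{thm:main} yields the desired Airy variational limit after dividing by $\bnought N^{1/3}$.

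For the localization away from the central part, the growth condition \eqref{eq:outer_f_N^max} is designed so that the two terms in the maximum control distinct regimes. The parabolic bound $c_1\dnought(y/(2\cnought))^2$ dominates the quadratic centering $\bnought s^2 N^{1/3}$ and rules out mesoscopic $y$ just outside $2\cnought I_N N^{2/3}$, while the LLN-type bound $a_0(1-y) + c_3|y|$ provides an $O(N)$ margin above the mean of $G_{(N,N)}(0, y)$ when $y$ is macroscopically away from the origin. Combined with the moderate deviation estimates of Proposition \ref{prop:previous_pt_to_pt}\ref{enu:prop:previous_pt_to_pt_2} and Lemma \ref{lem:weaker_technical}, these bounds force the maximum defining $G^{f_N}_{(N,N)}(L)$ to concentrate, with probability tending to one, on the central range, which is compatible with the localization step already used in the proof of Theorem \ref{thm:main}. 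The $\llcorner$-shaped case reduces to the vertical case by splitting the supremum across the two arms and exploiting the symmetry of the LPP weights under $(x, y) \mapsto (y, x)$; the factor $|s|$ in the drift term of $\mathrm{Hyp}^{\llcorner}$ is exactly what is needed to splice the two halves $s \leq 0$ and $s \geq 0$ into a single maximum over $s \in (a_\infty, b_\infty)$.

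The main technical obstacle is carrying out the construction of $L_N^{\dagger}$ with sufficient care to verify the hypothesis of Theorem \ref{thm:main} uniformly in $\ell$ satisfying $\mathrm{Hyp}^{\vert}$. One must realize $L_N^{\dagger}$ as an honest down-right lattice path (involving some rounding), arrange that it remains at distance at least $N^{1/3 + 2c_2}$ from the limit shape $\Leq$ outside its central part (which requires translating the growth condition \eqref{eq:outer_f_N^max} on $f_N$ into a geometric separation condition on $L_N^{\dagger}$), and verify that the modified constants in the hypothesis depend only on $C, c_1, c_2, c_3$ and not on $\ell$ itself. The quantitative uniformity of slow decorrelation in Theorem \ref{thm:uniform_slow_decorr} is what powers this translation, and is what ensures that the $\dnought N^{1/3}$ offset scale of the cost function correctly converts into the offset scale built into $\mathrm{Hyp}$ for $L_N^{\dagger}$.
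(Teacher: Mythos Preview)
Your route---reduce to Theorem \ref{thm:main} by building an auxiliary down-right path $L_N^{\dagger}$---is not what the paper does. The paper repeats the micro/meso/macro decomposition directly for $G^{f_N}_{(N,N)}(L)$. For the micro part it does not construct any intermediate curve: it simply observes that the vertical segment $\{(0,2s\cnought N^{2/3}):|s|\le M\}$ already fits into the framework of $\tilde H_N$ in \eqref{eq:defn_of_tilde_H} with $\alpha=2/3$ and $\ell_N(s)=\cnought s$, so Theorem \ref{thm:uniform_slow_decorr} and Proposition \ref{prop:Johansson_weak} give the Airy limit at once. For the meso part the paper introduces a new device: the superadditivity inequality
\[
G_{(N,N)}(0,s)\ \le\ G_{(N,N)}(x',y')\;-\;G_{(0,s)}(x',y')
\]
with $(x',y')$ on an antidiagonal below the origin, and then bounds the first term by the Gibbs-based Lemma \ref{lem:PNG_max} (exactly as in Section \ref{sec:meso_window}) and the second term pointwise by the lower-tail estimate Proposition \ref{prop:previous_pt_to_pt}\ref{enu:prop:previous_pt_to_pt_2}.

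Your proposal has a genuine gap at the meso step. You write that the growth condition together with Proposition \ref{prop:previous_pt_to_pt}\ref{enu:prop:previous_pt_to_pt_2} and Lemma \ref{lem:weaker_technical} ``force the maximum \dots\ to concentrate on the central range,'' but a union of pointwise upper-tail bounds cannot do this. For $y=2s\cnought N^{2/3}$ with $s$ just above the fixed cutoff $M$, the excess of $f_N(y)$ over the mean of $G_{(N,N)}(0,y)$ is only of order $s^2 N^{1/3}$, so Lemma \ref{lem:weaker_technical} gives a bound $e^{-c s^2}$ per point; summing over the $O(N^{2/3})$ integer values of $y$ with $s\in[M,M+1]$ already produces a term $\sim N^{2/3}e^{-cM^2}$, which does not vanish as $N\to\infty$ for fixed $M$. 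This is exactly why the proof of Theorem \ref{thm:main} (and hence of Theorem \ref{thm:inhomogeneous}) needs Lemma \ref{lem:PNG_max} rather than a union bound, and the paper's superadditivity trick is what makes Lemma \ref{lem:PNG_max} applicable to the vertical line. Your reduction to $L_N^{\dagger}$ does not sidestep the issue: Theorem \ref{thm:uniform_slow_decorr} is uniform only on a fixed interval $[-M,M]$, so it cannot carry the meso estimate for $L_N^{\dagger}$ back to the meso range of the $f_N$ problem; you still owe a separate argument there, and the proposal does not supply one.
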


As applications of Theorem \ref{thm:inhomogeneous} (or adaption of its, see Remark \ref{rmk:not_real_corollaries}), we have the following results for point-to-point LPP with inhomogeneous parameter geometric weight distributions. The weight parameters we will consider differ from the homogeneous ones considered in Section \ref{subsec:LPP_on_Z*Z} in only finitely many columns and/or rows. 
So we use the same notation $\G(N, N)$ which is defined in \eqref{eq:defn_inverse} and \eqref{eq:LPP_defn}, but the weights on some of the lattice points are defined differently. To state the following corollaries, we denote by $\AiryA^{(1)}$ and $\AiryA^{(2)}$  two  independent Airy processes that are the $\AiryA$ described in Section \ref{Sec.distributions}, and denote by $\B_1, \dotsc, \B_k$ independent two-sided Brownian motions that are the $\B$ defined in \eqref{eq:defn_two_sided_BM}.
\begin{cor} \label{cor:corrollaries_of_inhomogeneous_LPP}
  In the $\intZ^2$ lattice we consider the point-to-point LPP $\G(N, N)$, and denote
  \begin{equation} \label{eq:defn_tilde_G}
    \tilde{G}_N = \frac{\G(N, N) - \anought N}{\bnought N^{1/3}},
  \end{equation}
  where $\anought$ and $\bnought$ are defined in \eqref{eq:defn_nought}.
  \begin{enumerate}[label=(\alph*)]
  \item \label{thm:spiked_border}
  Suppose the weights $w(i, j)$ are independent and geometrically distributed with parameter $\alpha_{i, j}$ such that $\alpha_{i, j} = 1 - q$ if $i \notin \{ 0, 1, \dotsc, k - 1 \}$ and
  \begin{equation} \label{eq:weight_k_borders}
    \alpha_{i - 1, j} = 1 - \sqrt{q} \left( 1 - \frac{2w_i}{\dnought N^{1/3}} \right) \quad \text{if} \quad i = 1, \dotsc, k,
  \end{equation}
  where $k \in \intZ_+$ and $w_1, \dotsc, w_k \in \realR$ are constants. Then
  \begin{multline} \label{eq:spiked_border}
    \lim_{N \to \infty} \Prob (\tilde{G}_N \leq x) = \\
    \Prob \left( \max_{0 = s_0 \leq s_1 \leq \dotsb \leq s_k} \left( \AiryA(s_k) + \sqrt{2} \sum^k_{i = 1} (\B_i(s_i) - \B_i(s_{i - 1})) - 4\sum^k_{i = 1} w_i(s_i - s_{i - 1}) - s^2_k \right) \leq x \right).
  \end{multline}
\item \label{thm:double_spiked_border}
  Suppose the weight $w(0, 0)$ is fixed to be $0$, the weights $w(i, j)$ are independent and geometrically distributed with parameter $\alpha_{i, j}$ if $i, j$ are not both $0$, such that $\alpha_{i, j} = 1 - q$ if $i, j$ are both nonzero, and
  \begin{equation}
    \alpha_{i, j} =
    \begin{cases}
      1 - \sqrt{q} \left( 1 - \frac{2w_+}{\dnought N^{1/3}} \right) & \text{if $i \geq 1$ and $j = 0$}, \\
      1 - \sqrt{q} \left( 1 - \frac{2w_-}{\dnought N^{1/3}} \right) & \text{if $i = 0$ and $j \geq 1$}.
    \end{cases}
  \end{equation}
  where $w_+, w_- \in \realR$ are constants. Then
  \begin{equation} \label{eq:double_spiked_border}
    \lim_{N \to \infty} \Prob (\tilde{G}_N \leq x) =  \Prob \left( \max_{s \in \realR} \left( \AiryA(s) + \sqrt{2}\B(s) + 4(w_+ 1_{s < 0} - w_- 1_{s > 0})s - s^2 \right) \leq x \right).
  \end{equation}
\item \label{thm:inner_spiked_border}
  Suppose the weight $w(i, j)$ are independent and geometrically distributed with parameter $\alpha_{i, j}$ such that $\alpha_{i, j} = 1 - q$ if $j \leq [\alpha N]$ or $j > [\alpha N] + k$, and
  \begin{equation}
    \alpha_{i, j} = 1 - \sqrt{q} \left( 1 - \frac{2w_{j - [\alpha N]}}{\dnought N^{1/3}} \right) \quad \text{if} \quad j = [\alpha N] + 1, \dotsc, [\alpha N] + k,
  \end{equation}
  where $\alpha \in (0, 1)$, $k \in \intZ_+$ and $w_1, \dotsc, w_k \in \realR$ are constants. Then
  \begin{multline} \label{eq:inner_spiked_border}
    \lim_{N \to \infty} \Prob (\tilde{G}_N \leq x) = \Prob \left( \max_{s_0 \leq s_1 \leq \dotsb \leq s_k} \left( \alpha^{1/3} \AiryA^{(1)}(\alpha^{-2/3} s_0) + \sqrt{2} \sum^k_{i = 1} (\B_i(s_i) - \B_i(s_{i - 1})) \right. \right. \\
    + \left. \left. ( 1 - \alpha)^{1/3} \AiryA^{(2)}((1 - \beta)^{-2/3} s_k)
        \vphantom{\max_{s_0 \leq s_1 \leq \dotsb \leq s_k}\sum^k_{i = 1}} - 4\sum^k_{i = 1} w_i(s_i - s_{i - 1}) - \frac{s^2_0}{\alpha} - \frac{s^2_k}{1 - \alpha} \right) \leq x \right).
  \end{multline}
  \end{enumerate}
\end{cor}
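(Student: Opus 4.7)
The plan is to reduce each of the three parts to Theorem \ref{thm:inhomogeneous} (or its $\llcorner$-shape variant) by decomposing the inhomogeneous LPP into a ``modified-weight'' part that becomes a Brownian-plus-drift boundary datum, and a ``homogeneous-weight'' part that becomes the Airy process via the point-to-curve fluctuation theorem. The guiding identity in all three parts takes the form
\[
\G^{\text{inhomog}}(N, N) \;=\; \max_{y}\big(F_N(y) + G^{\text{hom}}_{(N,N)}(L, y)\big),
\]
where $L$ is a vertical (resp.\ $\llcorner$-shaped) path separating the modified from the homogeneous region and $F_N(y)$ is the (random) last passage time in the modified region up to height $y$ on $L$. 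This random boundary $F_N$ will play the role of $-f_N$ in the hypothesis of Theorem \ref{thm:inhomogeneous}.

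For part (a), take $L = \{(k, y)\}$ and decompose the optimal path by the row $t_k$ at which it enters the first homogeneous column. Then $F_N(t_k)$ equals the modified LPP from $(0,0)$ to $(k-1, t_k)$, and under the scaling $t_k = 2s\cnought N^{2/3}$ the invariance principle applied to the iid geometric weights in each of the $k$ perturbed columns gives convergence of $[F_N(2s\cnought N^{2/3}) - s\anought\cnought N^{2/3}]/(\bnought N^{1/3})$ as a spatial process in $s$ to
\[
\ell(s) \;:=\; \max_{0 = s_0 \le s_1 \le \cdots \le s_{k-1} \le s}\; \sum_{i=1}^k\Big(\sqrt{2}(\B_i(s_i) - \B_i(s_{i-1})) - 4 w_i(s_i - s_{i-1})\Big).
\]
The coefficient $\sqrt{2}$ and the drift come from direct Taylor expansion of the mean $(1-\alpha)/\alpha$ and variance $(1-\alpha)/\alpha^2$ of a near-critical geometric weight, together with the identities on $\cnought, \bnought, \dnought$ that follow from the definitions in \eqref{eq:defn_nought}. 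Since $\ell(s)$ admits a quadratic upper envelope with probability arbitrarily close to $1$ (by standard Brownian tail estimates), the random-$\ell$ extension of Theorem \ref{thm:inhomogeneous} discussed in the remark after Theorem \ref{thm:main} applies, and the outer maximum over $s = s_k$ combines with the internal maximum in the definition of $\ell$ to produce \eqref{eq:spiked_border}.

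Part (b) proceeds analogously but uses the $\mathrm{Hyp}^{\llcorner}$ version of Theorem \ref{thm:inhomogeneous} on the path $\tilde{L}$ defined in \eqref{eq:L_shaped_L_tilde}. One now decomposes by whether the optimal path enters the homogeneous bulk from the positive $x$-axis or the positive $y$-axis, and the invariance principle on each axis separately produces the two-sided Brownian motion $\sqrt{2}\B(s)$ of \eqref{eq:defn_two_sided_BM} together with the signed-drift term $4(w_+ 1_{s<0} - w_- 1_{s>0})s$. For part (c), condition on the columns $s_0 \le s_1 \le \cdots \le s_k$ where the path crosses the $k$ modified rows: this splits the LPP into three pieces that are independent (as they use disjoint regions of iid weights), namely a homogeneous LPP in a sub-rectangle of aspect ratio roughly $\alpha$ yielding $\alpha^{1/3}\AiryA^{(1)}(\alpha^{-2/3} s_0) - s_0^2/\alpha$ via a rescaling of Proposition \ref{prop:Johansson_weak}, a sum of modified weights giving Brownian increments with drifts exactly as in part (a), and a homogeneous LPP in the complementary sub-rectangle yielding the analogous $\AiryA^{(2)}$ contribution.

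The main obstacle in all three parts is the joint localization and joint convergence: one must verify that the maximizing breakpoints $s_i$ stay in an $O(1)$ window of the scaled coordinate with overwhelming probability, and that the several processes appearing in the variational formula converge jointly to their claimed limits. Localization follows from the moderate deviation bounds of Proposition \ref{prop:previous_pt_to_pt}\ref{enu:prop:previous_pt_to_pt_2} and Lemma \ref{lem:weaker_technical}, combined with the quadratic growth control inherent to the hypotheses of Definition \ref{inhomogeneous_hypo} (which is also what makes the right-hand sides of \eqref{eq:spiked_border}--\eqref{eq:inner_spiked_border} well-defined). Joint convergence of the homogeneous pieces follows from Proposition \ref{prop:Johansson_weak} and the uniform slow decorrelation stated in Theorem \ref{thm:uniform_slow_decorr}, while the modified and homogeneous parts are independent by construction.
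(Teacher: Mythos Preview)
Your proposal is correct and follows essentially the same approach as the paper: parts (a) and (b) are reduced to the $\mathrm{Hyp}^{\vert}$ and $\mathrm{Hyp}^{\llcorner}$ cases of Theorem \ref{thm:inhomogeneous} respectively by peeling off the modified columns/rows as a random boundary datum $f_N$ whose rescaled fluctuation converges (via the invariance principle for near-critical geometric sums) to the Brownian-with-drift functional, while part (c) is handled by the same decomposition into two independent homogeneous blocks plus the modified strip, with the Airy limits coming from Proposition \ref{prop:Johansson_weak} combined with Theorem \ref{thm:uniform_slow_decorr}. The only point worth flagging is that, as the paper notes in Remark \ref{rmk:not_real_corollaries}, part (c) is not a black-box application of Theorem \ref{thm:inhomogeneous} but requires re-running its localization argument directly on the two homogeneous blocks; your sketch already reflects this, but the tail control outside the window $[-M,M]$ for \emph{both} Airy pieces simultaneously (the analogue of the paper's bounds on $\Prob_1,\dotsc,\Prob_5$) deserves to be spelled out rather than subsumed under ``localization follows from moderate deviations.''
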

\begin{rmk} \label{rmk:not_real_corollaries}
  Parts \ref{thm:spiked_border} and \ref{thm:double_spiked_border} of Corollary \ref{cor:corrollaries_of_inhomogeneous_LPP} are direct consequences of the first and second parts of Theorem \ref{thm:inhomogeneous}. Part \ref{thm:inner_spiked_border} does not follow this theorem in a straightforward way, although the proof of the theorem can be adapted to prove Part \ref{thm:inner_spiked_border}.
\end{rmk}

The limits on the left-hand sides of \eqref{eq:spiked_border}, \eqref{eq:double_spiked_border} and \eqref{eq:inner_spiked_border} have been analyzed previously in \cite{Baik-Rains00}, \cite{Baik-Ben_Arous-Peche05} and \cite{Baik06}, and the results were given in other forms by Fredholm determinants. Utilizing these earlier results we arrive at the following expressions for these statistics.
\begin{cor} \label{cor:special_cases_inhomogeneous}
  For all $x \in \realR$,
  \begin{enumerate}[label=(\alph*)]
  \item
    for all parameters $w_1, \dotsc, w_k \in \realR$,
    \begin{multline}
      \Prob \left( \max_{0 = s_0 \leq s_1 \leq \dotsb \leq s_k} \left(\AiryA(s_k) + \sqrt{2} \sum^k_{i = 1} (\B_i(s_i) - \B_i(s_{i - 1})) - 4\sum^k_{i = 1} w_i(s_i - s_{i - 1}) - s^2_k\right) \leq x \right) = \\
      F^{\spiked}_k(x; 2w_1, \dotsc, 2w_k),
    \end{multline}
    \item
    for all parameters $\alpha \in (0, 1)$ and $w_1, \dotsc, w_k \in \realR$,
    \begin{multline}
      \Prob \left( \max_{s_0 \leq s_1 \leq \dotsb \leq s_k} \left( \alpha^{1/3} \AiryA^{(1)}(\alpha^{-2/3} s_0) + \sqrt{2} \sum^k_{i = 1} \left(\B_i(s_i) - \B_i(s_{i - 1})\right) + ( 1 - \alpha)^{1/3} \A^{(2)}((1 - \beta)^{-2/3} s_k) \right. \right. \\
      \left. \left. \vphantom{\max_{s_0 \leq s_1 \leq \dotsb \leq s_k}\sum^k_{i = 1}} - 4\sum^k_{i = 1} w_i(s_i - s_{i - 1}) - \frac{s^2_0}{\alpha} - \frac{s_k^2}{1 - \alpha} \right) \leq x \right) = F^{\spiked}_k(x; 2w_1, \dotsc, 2w_k), \\
    \end{multline}
    \item
    for all parameters $w_+, w_- \in \realR$,
    \begin{equation}
      \Prob \left( \max_{s \in \realR} \left( \AiryA(s) + \sqrt{2}\B(s) + 4(w_+ 1_{s < 0} - w_- 1_{s > 0})s - s^2 \right) \leq x \right) = H(x; w_+, w_-),
    \end{equation}
  \end{enumerate}
  where $F^{\spiked}_k(x; w_1, \dotsc, w_n)$ is the distribution introduced in \cite[Formula (54)]{Baik-Ben_Arous-Peche05} and \cite[Corollary 1.3]{Baik06}, and $H(x; w_+, w_-)$ is the distribution function introduced in \cite{Baik-Rains00}.
\end{cor}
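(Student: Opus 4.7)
}
The plan is to identify both sides of each equality as the limit of the same inhomogeneous last passage percolation problem, computed along two different routes: once via the variational limit supplied by Corollary \ref{cor:corrollaries_of_inhomogeneous_LPP}, and once via the Fredholm-determinant limits established in \cite{Baik-Rains00, Baik-Ben_Arous-Peche05, Baik06}. Since distributional limits are unique, the two expressions must coincide. Concretely, for each of parts (a), (b), (c) I would locate the exact LPP model whose rescaled last passage time $\tilde{G}_N$ produces the variational expression on the left, and check that the parameter identifications match the conventions used in the cited works.

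For part (a), I would start from the $k$-column spiked border LPP of Corollary \ref{cor:corrollaries_of_inhomogeneous_LPP}\ref{thm:spiked_border}, whose parameters $\alpha_{i-1,j}=1-\sqrt{q}(1-2w_i/(\dnought N^{1/3}))$ implement the BBP-style perturbation in the scaling window of size $N^{1/3}$. The variational expression on the left of \eqref{eq:spiked_border} is then $\lim_N \tilde{G}_N$. On the other hand, by the main theorems of \cite{Baik-Ben_Arous-Peche05} and \cite{Baik06} (adapted to the geometric weights setup, which is standard), the same limit equals $F^{\spiked}_k(x;2w_1,\dots,2w_k)$. Matching the normalizations (the factor of $2$ in $2w_i$ reflects the conversion between the $\sqrt{q}\mapsto \sqrt{q}(1-2w_i/(\dnought N^{1/3}))$ convention and the perturbation parameters of BBP) yields the claim.

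For part (c), the same strategy applies to the two-axis spiked boundary of Corollary \ref{cor:corrollaries_of_inhomogeneous_LPP}\ref{thm:double_spiked_border}, whose $N\to\infty$ limit is by construction the left-hand side of the stated identity. This LPP is the geometric analogue of the stationary-boundary problem introduced by Baik and Rains: the parameters $w_+$ on the horizontal axis and $w_-$ on the vertical axis play the role of the two boundary parameters in \cite{Baik-Rains00}. The Baik--Rains theorem computes the one-point limit of this model to be $H(x;w_+,w_-)$, and the equality of both sides follows.

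For part (b), I would apply Corollary \ref{cor:corrollaries_of_inhomogeneous_LPP}\ref{thm:inner_spiked_border}, where the spike is placed at height $[\alpha N]$. The left-hand side is then the $N\to\infty$ limit of $\tilde{G}_N$ for this inner-perturbed LPP. This limit, for the same inner-spike LPP, is computed in \cite{Baik-Ben_Arous-Peche05} (see also \cite{Baik06}) to be $F^{\spiked}_k(x;2w_1,\dots,2w_k)$; the parameter $\alpha$ controls the position of the perturbation but drops out of the one-point limit, consistent with the variational formula in which $\alpha$ appears only through the internal decomposition into two independent Airy processes glued by Brownian increments. I expect the main obstacle to be purely bookkeeping: reconciling the normalization conventions (the $\bnought,\dnought$ factors here versus the scalings in the cited references, and the $w_i$ vs $2w_i$ convention) and verifying that the geometric-weight versions of \cite{Baik-Ben_Arous-Peche05, Baik06} are stated in the literature in the form we need, with no genuinely new analysis required once the identifications are set up.
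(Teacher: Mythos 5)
Your proposal matches the paper's own (terse) argument exactly: the corollary is deduced by identifying each variational quantity with the limit of $\tilde{G}_N$ for the corresponding inhomogeneous LPP from Corollary~\ref{cor:corrollaries_of_inhomogeneous_LPP}, and then equating that limit with the Fredholm-determinant / Painlev\'e expressions $F^{\spiked}_k$ and $H$ established for the same LPP models in \cite{Baik-Ben_Arous-Peche05}, \cite{Baik06}, and \cite{Baik-Rains00}. You also correctly pair part (a) with Corollary~\ref{cor:corrollaries_of_inhomogeneous_LPP}\ref{thm:spiked_border}, part (b) with \ref{thm:inner_spiked_border}, and part (c) with \ref{thm:double_spiked_border}, and flag the $w_i \mapsto 2w_i$ normalization as the only real bookkeeping issue, which is consistent with the paper.
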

\subsection{The Airy process}\label{Sec.distributions}

The Airy process $\AiryA(\cdot)$ \cite{Prahofer-Spohn02} (sometimes also denoted as $\AiryA_2(\cdot)$ and called the Airy$_2$ process, in contrast to the Airy$_1$ process $\AiryA_1$ considered in \eqref{eq:flat_limit}) is an important process appearing in the Kardar-Parisi-Zhang universality class, see for example \cite{Corwin11}. Its properties have been intensively studied, see for example \cite{Johansson03}, \cite{Corwin-Hammond11}, \cite{Quastel-Remenik13}.

The Airy process $\AiryA(\cdot)$ is defined through its finite-dimensional distributions which are given by a Fredholm determinant formula. For $x_0,\ldots,x_n\in\mathbb{R}$ and $t_0<\ldots<t_n$ in $\mathbb{R}$,
\begin{equation}\label{eq:detform}
\mathbb{P}\big(\AiryA(t_0)\le x_0,\ldots,\AiryA(t_n)\le x_n\big ) =
\det(I-\mathrm{f}^{1/2}K_{\mathrm{ext}}\mathrm{f}^{1/2})_{L^2(\{t_0,\ldots,t_n\}\times\mathbb{R})},
\end{equation}
where we have counting measure on $\{t_0,\ldots,t_n\}$ and Lebesgue measure on $\mathbb{R}$, $\mathrm f$ is defined on
$\{t_0,\ldots,t_n\}\times\mathbb{R}$ by $\mathrm{f}(t_j,x)=\mathbf{1}_{x\in(x_j,\infty)}$,
and the {\it extended Airy kernel} \cite{Prahofer-Spohn02} is defined by
$$K_\mathrm{ext}(t,\xi;t',\xi')=
\begin{cases}
\int_0^\infty d\lambda\,e^{-\lambda(t-t')}\Ai(\xi+\lambda)\Ai(\xi'+\lambda), &\text{if $t\ge t'$}\\
-\int_{-\infty}^0 d\lambda\,e^{-\lambda(t-t')}\Ai(\xi+\lambda)\Ai(\xi'+\lambda),  &\text{if $t<t'$},
\end{cases}$$
where $\Ai(\cdot)$ is the Airy function. It is readily seen that the Airy process is stationary. The one point distribution of $\AiryA$ is the $F_{\GUE}$ distribution (i.e., the GUE Tracy-Widom  distribution \cite{Tracy-Widom96}).

Since our main results appear as variational problems involving the Airy process, it is important to know that these problems are well-posed with finite answers. It was proved in \cite[Theorem 4.3]{Prahofer-Spohn02} and \cite[Theorem 1.2]{Johansson03} that there exists a measure on $\mathcal{C}(\realR,\realR)$ (continuous functions from $\realR\to \realR$ endowed with the topology of uniform convergence on compact subsets) whose finite dimensional distributions coincide with those of the Airy process (i.e., there exists a continuous version of the Airy process). Further properties of the Airy process were demonstrated in \cite{Corwin-Hammond11}. We summarize those properties which we will appeal to. Part \ref{enu:prop:Airy_property:a} of Proposition \ref{prop:Airy_property} is a special case of \cite[Proposition 4.1]{Corwin-Hammond11}, (our $\AiryA(t)$ is their $\mathcal{A}_1(t)$), while Part \ref{enu:prop:Airy_property:b} is a generalization of \cite[Proposition 4.4]{Corwin-Hammond11} where the parameter $c$ is taken as $1$, and the proof can be used for our generalized case with little modification.
\begin{prop} \label{prop:Airy_property}
  \begin{enumerate}[label=(\alph*)]
  \item (Local Brownian absolute continuity) \label{enu:prop:Airy_property:a}
    For any $s, t \in \realR$, $t > 0$, the measure on functions from $[0, t] \to \realR$ given by $\AiryA( \cdot + s) - \AiryA(s)$ is absolutely continuous with respect to Brownian motion of diffusion parameter $2$.
  \item  \label{enu:prop:Airy_property:b}
    For all positive constants $\alpha$ and $c$ such that $\alpha < c$, there exists $\epsilon > 0$ and $C(\alpha, c) > 0$ such that for all $t \geq C(\alpha, c) > 0$ and $x \geq -\alpha t^2$,
    \begin{equation}
      \Prob(\sup_{s \notin [-t, t]}  (\AiryA(s) - cs^2) > x) \leq e^{-\epsilon(ct^2 + x)^{3/2}}.
    \end{equation}
  \end{enumerate}
\end{prop}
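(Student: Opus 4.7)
The plan is to handle the two parts separately, since part \ref{enu:prop:Airy_property:a} is essentially a direct specialization while part \ref{enu:prop:Airy_property:b} requires modifying an existing proof. For part \ref{enu:prop:Airy_property:a}, I would simply invoke [Corwin--Hammond 2011, Proposition 4.1] applied to the top curve of the Airy line ensemble; the stated diffusion parameter $2$ is the one attached to $\AiryA$ under its standard normalization, so no further work is needed.

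For part \ref{enu:prop:Airy_property:b}, I would mimic the scheme used by Corwin--Hammond to prove their Proposition 4.4 (the $c=1$ case) and adjust the bookkeeping to carry general $c > \alpha > 0$. First, using the reflection symmetry $\AiryA(s) \dequal \AiryA(-s)$ built into the extended Airy kernel, it suffices to control
\[
\Prob\!\left(\sup_{s \geq t}\big(\AiryA(s) - cs^2\big) > x\right).
\]
Next I would dyadically decompose $[t,\infty) = \bigcup_{k \geq 0} J_k$ with $J_k = [2^k t,\, 2^{k+1} t]$. On $J_k$ one has $cs^2 \geq c\, 4^k t^2$, so the event in question forces $\sup_{s \in J_k}\AiryA(s) > c\, 4^k t^2 + x$. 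The hypothesis $\alpha < c$ together with $x \geq -\alpha t^2$ guarantees $c\, 4^k t^2 + x \geq (c-\alpha) t^2 > 0$ for all $k \geq 0$, which is what makes the eventual exponential estimate meaningful.

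The key ingredient is to upgrade the one-point Tracy--Widom right-tail bound $\Prob(\AiryA(s) > y) \leq C e^{-\frac{4}{3} y^{3/2}}$ (for $y$ large) into a supremum bound over $J_k$. Part \ref{enu:prop:Airy_property:a}, or more quantitatively the Brownian Gibbs property of the Airy line ensemble used in the original Corwin--Hammond argument, allows one to introduce a modulus of continuity and to conclude an estimate of the form
\[
\Prob\!\left(\sup_{s \in J_k}\AiryA(s) > y\right) \leq C_0\big(1 + \lvert J_k \rvert\big)\, e^{-c_0 y^{3/2}}
\]
for some fixed $c_0 \in (0,\tfrac{4}{3})$ uniformly in $k$. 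Retaining a uniform exponent $c_0$ through this upgrade is the delicate point and, I expect, the main obstacle; any $k$-dependence in $c_0$ would obstruct the summation below.

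Applying the interval estimate with $y = c\, 4^k t^2 + x$ and summing over $k \geq 0$, the $k=0$ term dominates since $(c\, 4^k t^2 + x)^{3/2}$ grows super-geometrically in $k$ while the prefactor $1 + 2^{k+1} t$ grows only geometrically. Adjusting the exponent slightly to absorb the geometric series and the prefactors, one obtains the claimed bound $e^{-\epsilon(ct^2 + x)^{3/2}}$ with $\epsilon = \epsilon(\alpha, c) > 0$. The requirement $t \geq C(\alpha, c)$ enters both to put the threshold $c\, 4^k t^2 + x$ into the asymptotic regime where the one-point Tracy--Widom tail bound is applicable and to absorb polynomial prefactors into the exponential.
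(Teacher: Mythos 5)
Your proposal matches the paper's approach: the paper proves neither part from scratch, but cites Corwin--Hammond Proposition~4.1 for part~(a) (noting $\AiryA$ is their top line $\mathcal{A}_1$) and states that part~(b) follows from Corwin--Hammond Proposition~4.4 (the $c=1$ case) "with little modification," which is precisely what your sketch carries out. Your dyadic decomposition is a harmless cosmetic variant of the unit-interval covering in Corwin--Hammond, and the uniformity of the interval-supremum exponent $c_0$ that you flag as the delicate point is supplied there by the stationarity of the Airy process together with their Lemma~4.3, so the argument closes as you expect.
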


One direct consequence of Proposition \ref{prop:Airy_property} is the well-definedness of the limit distributions in Theorems \ref{thm:main}, \ref{thm:main_TASEP}, and \ref{thm:inhomogeneous}.
\begin{cor} \label{cor:well_defined_ness}
  Let $\ell: \realR \to \realR$ be a continuous function that satisfies \eqref{eq:upper_bound_l} and $(a_{\infty}, b_{\infty})$ be an interval such that $-\infty \leq a_{\infty} < b_{\infty} \leq +\infty$. Then $\max_{s \in (a_{\infty}, b_{\infty})} \left(\AiryA(s) - (s - \sigma)^2 + \ell(s)\right)$ is a well defined random variable.
\end{cor}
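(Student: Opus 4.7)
To show the random variable is well-defined, I need to verify two things: (i) measurability, and (ii) almost sure finiteness of the supremum. For measurability, I would use the existence of a continuous version of $\AiryA$ (cited in the paragraph preceding Proposition \ref{prop:Airy_property}) together with continuity of $\ell$ and of $s \mapsto (s-\sigma)^2$. This makes $s \mapsto \AiryA(s) - (s-\sigma)^2 + \ell(s)$ continuous on $(a_{\infty}, b_{\infty})$, and hence its supremum over this interval coincides with its supremum over $(a_{\infty}, b_{\infty}) \cap \mathbb{Q}$, yielding measurability.

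For finiteness, I split into cases. If the interval $(a_{\infty}, b_{\infty})$ is bounded, continuity on its closure gives that the supremum is finite (in fact attained) almost surely, so there is nothing to prove. The main case is therefore $b_{\infty} = +\infty$ (the case $a_{\infty} = -\infty$ being analogous). Using the bound $|\ell(s)| < C + c_1 s^2$ from \eqref{eq:upper_bound_l} with $c_1 \in (0,1)$, expand
\begin{equation*}
\AiryA(s) - (s-\sigma)^2 + \ell(s) \leq \AiryA(s) - (1 - c_1)s^2 + 2\sigma s + (C - \sigma^2).
\end{equation*}
Choose $c' = (1 - c_1)/2 > 0$. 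Since $2\sigma s \leq c' s^2 + \sigma^2/c'$ for all $s$, there is a constant $C' = C'(C, c_1, \sigma)$ such that
\begin{equation*}
\AiryA(s) - (s-\sigma)^2 + \ell(s) \leq \AiryA(s) - c' s^2 + C' \quad \text{for all } s \in \realR.
\end{equation*}
Pick any $\alpha \in (0, c')$ and apply Proposition \ref{prop:Airy_property}\ref{enu:prop:Airy_property:b} with this $\alpha$ and $c = c'$: for $t \geq C(\alpha, c')$, taking $x = 0$,
\begin{equation*}
\Prob\Bigl(\sup_{s \notin [-t, t]} \bigl(\AiryA(s) - c' s^2\bigr) > 0\Bigr) \leq e^{-\epsilon (c' t^2)^{3/2}},
\end{equation*}
which tends to $0$ as $t \to \infty$. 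Borel-Cantelli (applied along $t_n = n$) then yields $\sup_{|s| > T}(\AiryA(s) - c' s^2) < \infty$ almost surely for $T$ sufficiently large (random). Combined with the continuity of $\AiryA(\cdot) - (\cdot - \sigma)^2 + \ell(\cdot)$ on the compact set $[-T, T] \cap \overline{(a_{\infty}, b_{\infty})}$, the full supremum over $(a_{\infty}, b_{\infty})$ is almost surely finite.

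I expect no serious obstacle here: the key ingredient is that the parabolic decay $-(s-\sigma)^2$ dominates the quadratic growth $c_1 s^2$ of $\ell$, leaving a strictly negative residual quadratic whose coefficient can be compared against the parameter $c$ in the tail estimate of Proposition \ref{prop:Airy_property}\ref{enu:prop:Airy_property:b}. The minor technical point to watch is that one must choose $\alpha$ strictly less than $c'$ before invoking the proposition, but with $c_1 < 1$ this is never tight.
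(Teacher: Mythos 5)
Your proof is correct and uses exactly what the paper intends: the corollary is stated as ``a direct consequence of Proposition \ref{prop:Airy_property},'' and you invoke part \ref{enu:prop:Airy_property:b} to control the tails after absorbing the quadratic growth of $\ell$ into the concave parabola $-(s-\sigma)^2$, leaving a strictly negative quadratic $-c's^2$ with $c' = (1-c_1)/2 > 0$. One minor simplification: the Borel--Cantelli step along $t_n = n$ is unnecessary, since for a single fixed $t \geq C(\alpha, c')$, letting $x \to \infty$ in Proposition \ref{prop:Airy_property}\ref{enu:prop:Airy_property:b} already gives $\Prob\bigl(\sup_{|s|>t}(\AiryA(s) - c's^2) = +\infty\bigr) = 0$, and continuity of $\AiryA$ on the compact set $[-t,t]$ finishes the argument; the choice $\alpha \in (0,c')$ and the requirement $x \geq -\alpha t^2$ pose no issue as you note.
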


The definition of the Airy process given by \eqref{eq:detform} is not well adapted to studying variational problems (as it only deals with finite dimensional distributions). Let us note that \cite[Theorem 2]{Corwin-Quastel-Remenik11} provides a concise Fredholm determinant formula for $\Prob(\AiryA(s) \leq g(s) \text{ for }s\in [a,b])$, for any interval $[a,b]$ and any $g\in H^1([a,b])$ (i.e. both $g$ and its derivative are in $L^2([a,b])$). As we do not utilize this formula, we do not restate it here.

\subsection{Main technical tools}
The main technical tools in this paper are results stemming from the uniform slow decorrelation property that allows us to generalize Proposition \ref{prop:Johansson_weak} by Johansson, and the Gibbs property of a multilayer line ensemble extension of the LPP model. As this Gibbs property will require some explanation, we delay a discussion of it until Section \ref{sec:Gibbs}.

Recall the stochastic process $H_N(s)$ defined in \eqref{eq:H_N_def}. We define more generally
\begin{equation} \label{eq:defn_of_tilde_H}
  \tilde{H}_N(s) = \frac{1}{\bnought N^{1/3}} \bigg( \G \left( N + \ell_N(s) N^{\alpha} + s \cnought N^{2/3}, N + \ell_N(s) N^{\alpha} - s \cnought N^{2/3} \right) - \anought (N + \ell_N(s) N^{\alpha}) \bigg),
\end{equation}
where $\alpha \in [0, 1)$ is a parameter and $\ell_N(s)$ is a sequence of continuous functions such that the curve $L = (\ell_N(s)N^{\alpha} + s \cnought N^{2/3}, \ell_N(s)N^{\alpha} - s \cnought N^{2/3})$, ($s \in \realR$) is a down-right lattice path.

If  $\alpha = 0$ and $\ell_N(s) = l^0(s(\cnought N^{2/3})$ where $l^0(s)$ is defined in  \eqref{eq:defn_l^0}, then $\tilde{H}(s)$ is equal to $H_N(s)$ (up to an overall additive difference of $\ell_N(s)/(\bnought N^{1/3})$) defined in \eqref{eq:H_N_def}.

\begin{thm} \label{thm:uniform_slow_decorr}
  Let $\tilde{H}_N(s)$ be defined in \eqref{eq:defn_of_tilde_H} with $\alpha \in (0, 1)$ and $\ell_N(t)$ continuous on $[-M, M]$ and $\max_{s \in [-M, M]}\, \lvert \ell_N(s) \rvert < C$  for all large enough $N$. Then $\tilde{H}_N(s) - H_N(s)$ converges in probability to $0$ in $\mathcal{C}([-M, M],\realR)$, that is, given $\epsilon, \delta > 0$, there is an integer $N_0$ that depends only on $M$, $\alpha$ and $C$ such that
  \begin{equation}
  \label{eq:uniform_slow_decorrelation}
    \Prob \left( \max_{s \in [-M, M]} \lvert H_N(s) - \tilde{H}_N(s) \rvert \geq \delta \right) < \epsilon
  \end{equation}
  if $N > N_0$.
\end{thm}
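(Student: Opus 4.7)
The plan is to establish a pointwise slow-decorrelation estimate with explicit tail bounds at each $s\in[-M,M]$, and then upgrade to the uniform statement via a discretization and a comparison of nearby LPP times.

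\textbf{Reduction to slow decorrelation of LPP.} Absorbing the negligible $\bigO(N^{-1/3})$ contribution from the bounded sawtooth $l^0$, I write
\begin{equation*}
\tilde{H}_N(s) - H_N(s) = \frac{1}{\bnought N^{1/3}}\Bigl(\G(\tilde{\mathbf{p}}_N(s)) - \G(\mathbf{p}_N(s)) - \anought \ell_N(s) N^{\alpha}\Bigr) + o(1),
\end{equation*}
where $\mathbf{p}_N(s) \approx (N + s\cnought N^{2/3},\,N-s\cnought N^{2/3})$ is the endpoint appearing in $H_N$ and $\tilde{\mathbf{p}}_N(s) = \mathbf{p}_N(s) + (\ell_N(s)N^{\alpha},\ell_N(s)N^{\alpha})$ is the endpoint appearing in $\tilde{H}_N$; these differ by $\bigO(N^{\alpha})$ purely in the characteristic (diagonal) direction. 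For fixed $s$, superadditivity gives $\G(\tilde{\mathbf{p}}_N(s)) \ge \G(\mathbf{p}_N(s)) + G_{\mathbf{p}_N(s)\to \tilde{\mathbf{p}}_N(s)}$, and the auxiliary point-to-point LPP on the right sits on scale $N^{\alpha}$; by Proposition \ref{prop:previous_pt_to_pt} and Lemma \ref{lem:weaker_technical} it concentrates at $\anought \ell_N(s) N^{\alpha}$ with exponential tails at fluctuation scale $N^{\alpha/3}$, which is $o(N^{1/3})$ since $\alpha<1$. For the matching upper bound I decompose $\G(\tilde{\mathbf{p}}_N(s)) = \max_{\mathbf{q}}\bigl[\G(\mathbf{q}) + G_{\mathbf{q}\to \tilde{\mathbf{p}}_N(s)}\bigr]$ over lattice points $\mathbf{q}$ on the anti-diagonal through $\mathbf{p}_N(s)$, and use the two-sided moderate-deviation bounds (Proposition \ref{prop:previous_pt_to_pt}\ref{enu:prop:previous_pt_to_pt_2} and Lemma \ref{lem:weaker_technical}) together with Proposition \ref{prop:Johansson_weak} (applied to $\mathbf{q}\mapsto \G(\mathbf{q})$ on the anti-diagonal, which fluctuates like Airy minus a parabola) to show the maximum localizes within $\bigO(N^{2/3})$ of $\mathbf{p}_N(s)$ with exponentially small failure probability. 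This yields $\G(\tilde{\mathbf{p}}_N(s)) \le \G(\mathbf{p}_N(s)) + \anought \ell_N(s)N^{\alpha} + \bigO(N^{\alpha/3})$ with the same exponential tail strength.

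\textbf{From pointwise to uniform.} Because $\|\ell_N\|_\infty \le C$, the tail bounds in the pointwise estimate are uniform in $\ell_N$, so a union bound over a grid of $\bigO(N^{\beta})$ evenly spaced points in $[-M,M]$ (any small $\beta>0$) controls $R_N := H_N - \tilde H_N$ at grid points. To interpolate between grid points, I compare $R_N(s)$ to $R_N(s_i)$ for the nearest grid point $s_i$: the difference is an affine combination of $\G$ at four endpoints, each pair separated by $\bigO(\eta \cnought N^{2/3})$ transversally and $\bigO(N^{\alpha})$ diagonally, and the same superadditivity-plus-moderate-deviation technique bounds these four-point differences with exponential tails that vanish as $\eta = N^{-\beta} \to 0$. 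Uniformity in $\ell_N$ is preserved throughout because all invoked estimates depend on $\ell_N$ only through $\|\ell_N\|_\infty$. This yields \eqref{eq:uniform_slow_decorrelation} with $N_0 = N_0(M,\alpha,C)$ as required.

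\textbf{Main obstacle.} The hardest step is the upper bound in the pointwise slow decorrelation. The naive bound $|\G(\tilde{\mathbf{p}}_N(s)) - \G(\mathbf{p}_N(s))| = \bigO(N^{1/3})$ obtained from summing individual Tracy-Widom fluctuations is useless, so one must exploit the shared coupling through the weights and quantify that the optimal path to $\tilde{\mathbf{p}}_N(s)$ crosses the anti-diagonal through $\mathbf{p}_N(s)$ within $\bigO(N^{2/3})$ of $\mathbf{p}_N(s)$. Proving this transverse localization --- together with the fact that the fluctuation it contributes is of the smaller scale $N^{\alpha/3}$ rather than $N^{1/3}$ --- is what forces the use of both directions of the moderate-deviation estimates and is the technical heart of Section \ref{sec:uniform_slow_decor}.
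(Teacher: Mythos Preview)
Your lower bound via superadditivity is fine, and your plan for the passage from pointwise to uniform is reasonable in spirit. The problem is the upper bound in the pointwise step. You write that localizing the crossing point $\mathbf{q}$ to within $O(N^{2/3})$ of $\mathbf{p}_N(s)$ ``yields $\G(\tilde{\mathbf{p}}_N(s)) \le \G(\mathbf{p}_N(s)) + \anought \ell_N(s)N^{\alpha} + O(N^{\alpha/3})$.'' That inference is not justified. In the decomposition
\[
\G(\tilde{\mathbf{p}}_N(s)) = \max_{\mathbf{q}}\bigl[\G(\mathbf{q}) + G_{\mathbf{q}\to \tilde{\mathbf{p}}_N(s)}\bigr],
\]
the short block $G_{\mathbf{q}\to \tilde{\mathbf{p}}_N(s)}$ does fluctuate on scale $N^{\alpha/3}$, but the first term $\G(\mathbf{q})-\G(\mathbf{p}_N(s))$ is an Airy-type increment over an anti-diagonal displacement of order $N^{2/3}$, hence of order $N^{1/3}$, not $N^{\alpha/3}$. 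To salvage this route you would need a much finer localization (to scale $N^{2\alpha/3}$, where the curvature penalty $v^2/N^{\alpha}$ of the short block balances the $v^{1/2}$ modulus of the long block) together with a quantitative modulus-of-continuity estimate for $\mathbf{q}\mapsto \G(\mathbf{q})$ on that scale; neither ingredient is supplied by the tools you cite.

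The paper avoids this difficulty altogether. Instead of decomposing the geodesic to $\tilde{\mathbf p}_N(s)$, it introduces two reference processes $H_{N,\pm}(s)$ obtained by shifting the endpoint diagonally by $\pm 2C' N^{\alpha}$, so that $\tilde{\mathbf p}_N(s)$ lies \emph{between} the two reference endpoints for every $s$. Superadditivity then gives the one-sided bounds $\tilde H_N(s)\le H_{N,+}(s)+o(1)$ and $H_{N,-}(s)-o(1)\le \tilde H_N(s)$ pointwise, using only the easy direction (short diagonal LPP concentrates on scale $N^{\alpha/3}$). The remaining task, $\max_s|H_{N,\pm}(s)-H_N(s)|\to 0$, involves only the \emph{deterministic} diagonal shifts $\pm 2C' N^{\alpha}$ and is handled by (i) Johansson's tightness of $H_N$ to reduce to finitely many grid points, and (ii) the already-established \emph{pointwise} slow decorrelation of \cite{Corwin-Ferrari-Peche12} at those grid points. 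This sandwich argument never needs the two-sided pointwise estimate you are trying to prove from scratch, and in particular never needs transverse localization of the geodesic.
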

The slow decorrelation property is a common feature in many models in the KPZ universality, including the LPP model, and equivalently the TASEP model, considered in this paper. As a pointwise property, it is studied first in \cite{Ferrari08} and then comprehensively in \cite{Corwin-Ferrari-Peche12}. Let $M \to 0_+$, then we have the result that as $N \to \infty$, $N^{-1/3} (\G(N, N) - \anought N)$ is equal to $N^{-1/3} (\G(N + \ell_N(0) N^{\alpha}, N + \ell_N(0) N^{\alpha}) - \anought(N + \ell_N(0)N^{\alpha}))$ in probability. This is a special case of the slow decorrelation result obtained in \cite{Corwin-Ferrari-Peche12}, where the characteristic line is the $\pi/4$ radial line. Theorem \ref{thm:uniform_slow_decorr} generalizes the pointwise slow decorrelation to be uniform on an interval.

Theorem \ref{thm:uniform_slow_decorr} gives control of $\tilde H_N(s)$ in any fixed interval $[-M,M]$. Outside this fixed interval we need the following lemma to control the point-to-curve LPPs by point-to-point LPPs as shown in Figure \ref{fig:Gibbs_path}. The lemma is a special consequence of the Gibbs property (see Section \ref{sec:Gibbs}), but it suffices for our paper.
\begin{lem} \label{lem:PNG_max}
  Suppose $N > 0$, $K_1 < K_2 < K_3$ are integers between $-N$ and $N$, and $M_1, M_2, M_3$ are real numbers such that $(K_1, M_1), (K_2, M_2), (K_3, M_3)$ are colinear, \ie,
  \begin{equation} \label{eq:colinear}
    \frac{M_1 - M_2}{K_1 - K_2} = \frac{M_2 - M_3}{K_2 - K_3}.
  \end{equation}
  Let $c \in (0, 1)$ be a constant and let $l^0(s)$ be defined in \eqref{eq:defn_l^0}. Then
  \begin{equation} \label{eq:Gibbs_thm}
    \begin{split}
      & \Prob \left( \max_{K_1 \leq s \leq K_2 - c(K_2 - K_1)} \G(N + l^0(s) + s, N + l^0(s) - s) \geq M_0 \right) \\
      \leq {}& (2 + \epsilon_{\min(c(K_2 - K_1), K_3 - K_2)}) \Prob(\G(N + K_2, N - K_2) \geq M_2) \\
      & + \Prob(\G(N + K_3, N - K_3) \leq M_3),
    \end{split}
  \end{equation}
  where for all $t > 0$, $\epsilon_t$ is a positive constant such that $\epsilon_t \to 0$ as $t \to \infty$.
\end{lem}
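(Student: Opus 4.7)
The process $H(s):=\G(N+l^0(s)+s, N+l^0(s)-s)$ is, via an RSK-type construction, the top curve of a multi-layer PNG line ensemble which (after rescaling) satisfies a Brownian Gibbs property, to be developed in Section~\ref{sec:Gibbs}: conditionally on the values of $H$ at the endpoints of any interval $[a,b]$ and on the second line $L_2$ of the ensemble restricted to $[a,b]$, the law of $H|_{(a,b)}$ is that of a Brownian bridge conditioned to stay above $L_2$. The plan is to combine this Gibbs property with an exchange/reflection argument for free Brownian bridges (using the colinearity of $(K_i, M_i)$) and to control the non-intersection error via a monotone coupling.

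Introduce the events
\begin{equation*}
A := \Big\{\max_{s \in [K_1,\, K_2 - c(K_2-K_1)]} H(s) \geq M_1\Big\}, \quad B := \{H(K_2) \geq M_2\}, \quad C := \{H(K_3) \leq M_3\}.
\end{equation*}
Since $l^0(K_j)=0$ at integers, $H(K_j)=\G(N+K_j, N-K_j)$, matching the right-hand side of the lemma. (I read the $M_0$ in the event as $M_1$, as the colinearity hypothesis suggests.) The elementary decomposition
\begin{equation*}
\Prob(A) \leq \Prob(A\cap B) + \Prob(C) + \Prob(A \cap B^c \cap C^c) \leq \Prob(B) + \Prob(C) + \Prob(A\cap B^c \cap C^c)
\end{equation*}
reduces the lemma to the estimate $\Prob(A \cap B^c \cap C^c) \leq (1+\epsilon_t)\Prob(B)$ with $t := \min(c(K_2-K_1),\, K_3-K_2)$.

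For that estimate, apply the Gibbs resampling to $H|_{(K_1, K_3)}$: conditionally on the $\sigma$-algebra $\mathcal{F}$ generated by $H(K_1)$, $H(K_3)$ and $L_2|_{[K_1, K_3]}$, the process is a Brownian bridge conditioned to remain above $L_2$. The event $C^c$ is $\mathcal{F}$-measurable and fixes $H(K_3) > M_3$. For a \emph{free} bridge (ignoring the floor constraint), an exchange argument based on the reflection principle and the colinearity $\tfrac{M_2-M_1}{K_2-K_1} = \tfrac{M_3-M_2}{K_3-K_2}$ injects the paths contributing to $A\cap B^c$ into paths contributing to $B$, giving $\Prob_{\mathrm{free}}(A\cap B^c \mid \mathcal{F}) \leq \Prob_{\mathrm{free}}(B\mid\mathcal{F})$. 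The heuristic: if the bridge reaches $\geq M_1$ at some $s^* \leq K_2 - c(K_2-K_1)$ and lies below $M_2$ at $K_2$, then reflecting the post-hitting portion of the path across the colinear line produces a path whose value at $K_2$ is at least $M_2$; the buffer $c(K_2-K_1)$ keeps the first hitting location strictly to the left of $K_2$ so that the reflection is non-degenerate.

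The main obstacle is to control the error introduced by replacing the bridge conditioned to remain above $L_2$ by the free bridge. I would invoke the monotone coupling machinery of Corwin--Hammond \cite{Corwin-Hammond11}: the conditioned and free bridges can be coupled so that their total-variation distance vanishes as either $c(K_2-K_1)$ or $K_3-K_2$ grows, because on such scales the floor $L_2$ lies well below the top line with overwhelming probability (a consequence of the known one-point fluctuation scale $N^{1/3}$ together with the gap between consecutive lines of the ensemble). This yields the multiplicative correction $(1+\epsilon_t)$ with $\epsilon_t \to 0$ as $t\to\infty$, and combined with the earlier decomposition gives $(2+\epsilon_t)\Prob(B) + \Prob(C)$, completing the proof.
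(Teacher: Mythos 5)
Your decomposition $\Prob(A)\le\Prob(B)+\Prob(C)+\Prob(A\cap B^c\cap C^c)$ is fine, and the use of the Gibbs property together with monotone coupling is the right tool set, but the key step — bounding $\Prob(A\cap B^c\cap C^c)$ by $(1+\epsilon_t)\Prob(B)$ via total-variation closeness of the conditioned and free bridges — has a genuine gap. The monotone coupling of Corwin--Hammond gives a one-sided stochastic domination (the conditioned bridge lies above a coupled free bridge), not a total-variation bound; and the TV distance between a PNG trajectory conditioned to stay above $h_1$ and the free bridge does \emph{not} become negligible as $\min(c(K_2-K_1),K_3-K_2)\to\infty$, because consecutive lines of the multi-layer ensemble fluctuate at the same $N^{1/3}$ scale as each other, so the floor constraint on $[K_1,K_3]$ has macroscopic effect. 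Even granting TV-closeness, transferring your free-bridge inequality $\Prob_{\mathrm{free}}(A\cap B^c\mid\mathcal F)\le\Prob_{\mathrm{free}}(B\mid\mathcal F)$ to the conditioned law would only give an \emph{additive} error $\Prob_{\mathrm{cond}}(A\cap B^c)\le\Prob_{\mathrm{cond}}(B)+\epsilon_t$, not the multiplicative $(1+\epsilon_t)\Prob(B)$ you would need to recover the claimed form.

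The paper sidesteps this by using the one-sided bound in the opposite direction. It decomposes $\Prob(A\cap C^c)$ over the first hitting location $K$ of the level $M_1$ and the values $h_0(2K)=M_1'\ge M_1$, $h_0(2K_3)=M_3'\ge M_3$; conditionally, the Gibbs property makes $h_0$ on $[2K,2K_3]$ a PNG bridge above $h_1$, and monotone coupling shows its law at $2K_2$ stochastically dominates that of a \emph{free} bridge from $(2K,M_1')$ to $(2K_3,M_3')$. Because $(K,M_1')$, $(K_2,M_2')$, $(K_3,M_3')$ are collinear with $M_2'\ge M_2$, Lemma~\ref{lem:middle_pt} (median of a bridge equals the linear interpolation, up to a $\delta_t$ error) gives the lower bound $\Prob(g(2K_2)\ge M_2)\ge\tfrac12-\delta_t$, uniformly in the endpoints. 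Multiplying and summing over $(K,M_1',M_3')$ yields $\Prob(A\cap C^c)\le(\tfrac12-\delta_t)^{-1}\Prob(B)=(2+\epsilon_t)\Prob(B)$, and hence the lemma. So the engine is a stopping-time decomposition plus a ``forward positive-probability'' estimate, not the exchange/reflection argument across the colinear line you propose. The reflection principle is also delicate here because reflecting part of a path across a \emph{slanted} line does not preserve the bridge endpoints; the median argument avoids that subtlety entirely. (Your reading of $M_0$ as $M_1$ in the lemma statement is correct — that is a typo in the paper — and the paper's own proof uses $M_1$.)
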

\begin{figure}[htb]
  \centering
  \includegraphics{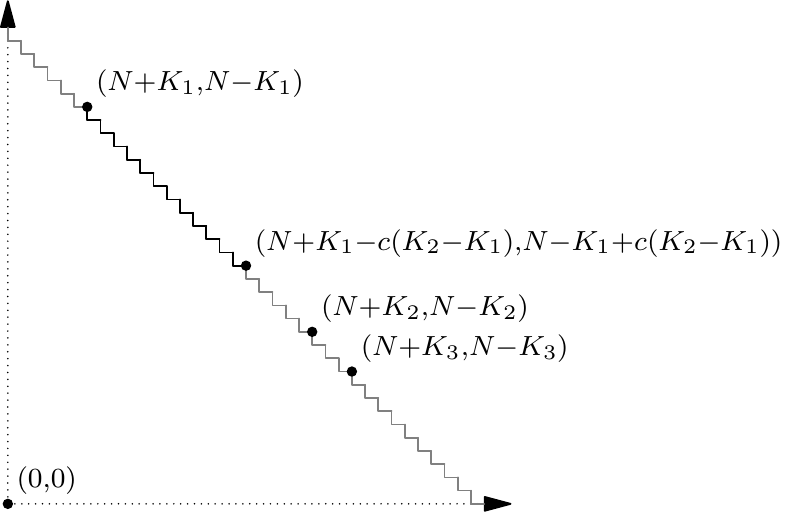}
  \caption{The points $(N + K_1, N - K_1)$, $(N + K_2, N - K_2)$ and $(N + K_3, N - K_3)$ are on the same diagonal down-right lattice path. The left-hand side of \eqref{eq:Gibbs_thm} is the LPP between the point $(0, 0)$ and the down-right lattice path between $(N + K_1, N - K_1)$ and $(N + K_1 - c(K_2 - K_1), N - K_1 + c(K_2 - K_1))$, shown in solid.}
  \label{fig:Gibbs_path}
\end{figure}

\section{Proof of Theorems \ref{thm:main}, \ref{thm:main_TASEP}, and \ref{thm:inhomogeneous}} \label{sec:proofs_of_main_theorems}

In this section, we give the detail of the proof of Theorem \ref{thm:main} in Section \ref{sec:proof_thm:main}, and show briefly that Theorem \ref{thm:main_TASEP} can be proved by the same method as  Theorem \ref{thm:main} in Section \ref{sec:proof_thm:TASEP}. The proof of Theorem \ref{thm:inhomogeneous}, (as well as the proof of Corollary \ref{cor:corrollaries_of_inhomogeneous_LPP}\ref{thm:double_spiked_border}) is by the same method with some adaptations, and we discuss them in Section \ref{sec:proof_lem:technical}.

\subsection{Proof of Theorem \ref{thm:main}} \label{sec:proof_thm:main}

By the translational invariance of the lattice, we can shift the point $(N + [\sigma c_0 N^{2/3}], N - [\sigma c_0 N^{2/3}])$ into $(N, N)$, and thus if we can prove Theorem \ref{thm:main} in the special case that $\sigma = 0$, the general case is proved by shifting the lattice. This is because the above shift applied to $L_N$ does not change the fact that it satisfies the hypotheses. Therefore, we only prove the $\sigma = 0$ case of Theorem \ref{thm:main} for notational simplicity.

Recall $\mathrm{Hyp}\big(C,c_1,c_2,c_3,a_{\infty},b_{\infty},\{m_N\}\big)$ given in Definition \ref{thmhypo}. Without loss of generality, we let the interval $I_N$ defined there be $[-N^{c_2}, N^{c_2}]$. By \eqref{eq:L_N_intersect_sector}, we only need to consider the curve $(L_N \cap (-\infty, N) \times (-\infty, N))$. We divide it into parts $L^{\micro}_N(M)$,  $L^{\meso, L}_N(M)$, $L^{\meso, R}_N(M)$, and $L^{\macro}_N$, where the first three depend on a constant $M > 0$, such that, recalling that $L^{\ct}_N$ defined in \eqref{eq:L^ct},
\begin{align}
  L^{\micro}_N(M) = {}& \{ (x, y) \in L^{\ct}_N \mid \lvert x - y \rvert \leq 2M c_0 N^{2/3} \}, \\
  L^{\meso, L}_N(M) = {}& \{ (x, y) \in L^{\ct}_N \setminus L^{\micro}_N(M) \mid x < 0 \}, \\
  L^{\meso, R}_N(M) = {}& \{ (x, y) \in L^{\ct}_N \setminus L^{\micro}_N(M) \mid x > 0 \}, \\
  L^{\macro}_N = {}& (L_N \cap (-\infty, N) \times (-\infty, N)) \setminus L^{\ct}_N.
\end{align}

In Subsection \ref{sec:micro_window}, we show that for any fixed $M > 0$ and $\epsilon > 0$,
\begin{equation} \label{eq:micro_estimate}
  \left\lvert \Prob \left( \frac{G_{(N, N)}(L^{\micro}_N(M)) - \anought N}{\dnought N^{1/3}} \leq x \right) - \Prob \left( \max_{s \in [-M, M]} \AiryA(s) - s^2 + \ell(s) \leq x \right) \right\rvert < \epsilon
\end{equation}
for all $N$ large enough, independent of the particular formula of $\ell(s)$. In Subsection \ref{sec:meso_window}, we show that for any fixed $\epsilon > 0$, there is an $M$ such that for all $N$ large enough, independent of the particular formula of $\ell(s)$,
\begin{equation} \label{eq:meso_estimate}
  \Prob \left( \frac{G_{(N, N)}(L^{\meso, *}_N(M)) - \anought N}{\dnought N^{1/3}} > x \right) < \epsilon, \quad \text{for $* = L$ or $R$},
\end{equation}
and for any fixed $\epsilon > 0$, for all $N$ large enough, independent of the particular formula of $\ell(s)$,
\begin{equation} \label{eq:macro_estimate}
  \Prob \left( \frac{G_{(N, N)}(L^{\macro}_N) - \anought N}{\dnought N^{1/3}} > x \right) < \epsilon.
\end{equation}
Thus by the three inequalities \eqref{eq:micro_estimate}, \eqref{eq:meso_estimate} and \eqref{eq:macro_estimate}, and the limit identity
\begin{equation} \label{eq:Airy_process_property_to_be_cited}
  \lim_{M \to \infty} \Prob \left( \max_{s \in [-M, M]} \left(\AiryA(s) - s^2 + \ell(s) \right)< x \right) = \Prob \left( \max_{s \in \realR}\left( \AiryA(s) - s^2 + \ell(s)\right) < x \right)
\end{equation}
that is a consequence of Proposition \ref{prop:Airy_property}\ref{enu:prop:Airy_property:b}, we prove the inequality \eqref{ea:ineq_main} of Theorem \ref{thm:main}.

\subsubsection{Microscopic estimate} \label{sec:micro_window}

In this subsection we prove that the inequality \eqref{eq:micro_estimate} holds for large enough $N$, where $M > 0$ and $\epsilon > 0$ is a constant.

%

Since $H_N(s)$ (recall from \eqref{eq:H_N_def}), as a stochastic process in $s \in [-M, M]$, converges weakly to $\AiryA(s) - s^2$ and $l_N(s)$ uniformly converges to $0$, we have that for any $\epsilon > 0$ there is a $\delta > 0$ such that for large enough $N$ independent of $\ell(s)$
\begin{align}
  \Prob \Big( \max_{s \in [-M, M]} \big(H_N(s) + (\ell(s) + l_N(s))\big) \leq x + \frac{\delta}{2} \Big) < {}& \Prob \Big( \max_{s \in [-M, M]} \big(\AiryA(s) - s^2 + \ell(s) \big)\leq x + \delta \Big) + \frac{\epsilon}{3}, \label{eq:L-P_metric:1} \\
  \Prob \Big( \max_{s \in [-M, M]}\big(\AiryA(s) - s^2 + \ell(s) \big)\leq x - \delta \Big) - \frac{\epsilon}{3} < {}& \Prob \Big( \max_{s \in [-M, M]} \big(H_N(s) + (\ell(s) + l_N(s)) \big)\leq x - \frac{\delta}{2} \Big). \label{eq:L-P_metric:2}
\end{align}
By Proposition \ref{prop:Airy_property}\ref{enu:prop:Airy_property:a}, the Airy process is locally Brownian, so if $\delta$ is small enough, then
\begin{align}
  \Prob \left( \max_{s \in [-M, M]} \left(\AiryA(s) - s^2 + \ell(s)\right) \leq x + \delta \right) - \Prob \left( \max_{s \in [-M, M]}\left( \AiryA(s) - s^2 + \ell(s)\right)\leq x \right) < {}& \frac{\epsilon}{3}, \label{eq:Airy_uniform_continuous:1} \\
  \Prob \left( \max_{s \in [-M, M]}\left( \AiryA(s) - s^2 + \ell(s)\right) \leq x \right) - \Prob \left( \max_{s \in [-M, M]}\left( \AiryA(s) - s^2 + \ell(s)\right)\leq  x - \delta \right) < {}& \frac{\epsilon}{3}. \label{eq:Airy_uniform_continuous:2}
\end{align}
The uniform slow decorrelation of LPP given in Theorem \ref{thm:uniform_slow_decorr} implies that
\begin{equation}\label{eq:micro_estimate1}
\Prob\left(\max_{s\in[-M,M]}\left\lvert \tilde H_N(s) - H_N(s)\right\rvert > \frac{\delta}{2} \right)<\frac{\epsilon}{3}
\end{equation}
for large enough $N$. Here $\tilde H_N(s)$ is defined as in \eqref{eq:defn_of_tilde_H} with $\ell_N(s)=\ell(s)+l_N(s)$ and $\alpha =1/3$. This bound relies on the fact that $\max_{s}|\ell_N(s)|$ is bounded.

The inequalities above yield that
\begin{equation} \label{eq:final_step_micro_window}
  \begin{split}
    \Prob \left( \frac{G_{(N,N)}(L^{\micro}_N(M))-\anought N}{\dnought N^{\frac{1}{3}}} \leq x \right) = {}& \Prob \left( \max_{s \in [-M, M]} \left(\tilde{H}_N(s) + (\ell(s) + l_N(s))\right) \leq x \right) \\
    < {}& \Prob \left( \max_{s \in [-M, M]}\left( H_N(s) + (\ell(s) + l_N(s))\right) \leq x + \frac{\delta}{2} \right) + \frac{\epsilon}{3} \\
    < {}& \Prob \left( \max_{s \in [-M, M]} \left(\AiryA(s) - s^2 + \ell(s)\right) \leq x + \delta \right) + \frac{2\epsilon}{3} \\
    < {}& \Prob \left( \max_{s \in [-M, M]}\left( \AiryA(s) - s^2 + \ell(s)\right) \leq x \right) + \epsilon.
  \end{split}
\end{equation}
Thus one direction of inequality \eqref{eq:micro_estimate} follows, and the proof of the other direction of \eqref{eq:micro_estimate} is similar. Finally note that the above inequalities only relied on the boundedness of $\max_s|\ell(s)|$ and not on the particular form of $\ell(s)$. This implies that the choice of $N_0$ for which the theorem holds can be made uniformly over all $\ell(s)$ satisfying the hypotheses.


\subsubsection{Macroscopic and mesoscopic estimates} \label{sec:meso_window}

\paragraph{Macroscopic estimate}

Inequality \eqref{eq:macro_estimate} is a direct consequence of Lemma \ref{lem:weaker_technical}. From \eqref{eq:region_D} it follows that for $(x, y)\in L^{\macro}_N$, $(N^{-1} x, N^{-1} y) \in D$. Also recall that $L_N$ satisfies the relation \eqref{eq:L_N_outside}. Thus, by Lemma \ref{lem:weaker_technical}, we have that for all $N$ large enough,
\begin{equation} \label{eq:ptp_macro}
  \Prob \left( \frac{G_{(N, N)}(x, y) - \anought N}{\dnought N^{1/3}} > x \right) < e^{-c N^{2 c_2}},
\end{equation}
where $c > 0$ depends on $c_3$ in \eqref{eq:L_N_outside} but not the shape of $L^{\macro}_N$. Note that there are fewer than $4(1 + q^{-1/2})^2 N^2$ lattice points (i.e. with integer coordinates) whose image under the scaling transform $(x, y) \to (N^{-1}x, N^{-1}y)$ lies in $D$. Thus we can pick $(x_i, y_i)$ on $L^{\macro}$ where $i = 1, \dotsc, [4(1 + q^{-1/2})^2 N^2]$, such that for all $(x, y) \in L^{\macro}$, $G_{(N, N)}(x, y)$ is equal to at least one $G_{(N, N)}(x_i, y_i)$. Thus
\begin{equation}
  \begin{split}
    \Prob \left( \frac{G_{(N, N)}(L^{\macro}_N) - a_0 N}{d_0 N^{1/3}} > x \right) < {}& \sum^{[4(1 + q^{-1/2})^2 N^2]}_{i = 1} \Prob \left( \frac{G_{(N, N)}(x_i, y_i) - a_0 N}{d_0 N^{1/3}} > x \right) \\
    < {}& 4(1 + q^{-1/2})^2 N^2 e^{-cN^{2c_2}},
  \end{split}
\end{equation}
and we obtain inequality \eqref{eq:macro_estimate} if $N$ is large enough.

\paragraph{Mesoscopic estimate}

By the symmetry of the lattice model, we only need to prove inequality \eqref{eq:meso_estimate} with $* = R$.

Before giving the proof, we remark that the simple approach in the macroscopic estimate fails in this case, since summing up all the point-to-point LPP between $(N, N)$ and lattice points on $L^{\meso, R}_N(M)$ gives a too large upper bound of the point-to-curve LPP $G_{(N, N)}(L^{\meso, R}_N)$. Before giving the technical proof, we explain the idea. We divide $L^{\meso, R}_N$ into segments according to the intervals $I(k)$ in \eqref{eq:defn_I(k)}. Then on each segment, we estimate the point-to-curve LPP (actually the upper bound $\Prob(k)$ defined in \eqref{eq:defn_Prob(k)}) by the point-to-point LPPs between $(0, 0)$ and the two points in \eqref{eq:first_point} and \eqref{eq:second_point}. We estimate the point-to-point LPPs by Lemma \ref{lem:weaker_technical}, and the relation between point-to-point LPPs and the point-to-curve LPP is established  by Lemma \ref{lem:PNG_max}.

Recall that $L^{\meso, R}_N(M) \subseteq L^{\ct}_N$ is defined in \eqref{eq:L^ct} by a continuous function $\ell(s) + l_N(s)$ for $s \in [M, N^{c_2}]$ , where $\ell(s)$ is bounded below by $C + c_1 s^2$ and $l_N(s)$ converges uniformly to $0$ as $N \to \infty$. By the inequality \eqref{eq:upper_bound_l}, we have that
\begin{equation} \label{eq:defin_tilde_M}
  \ell(s) < c'_1 s^2 \quad \text{for all $s \in [\tilde{M}, N^{c_2}]$, where $c'_1 \in (c_1, 1)$ and $\tilde{M} = \sqrt{C/(c'_1 - c_1)}$}.
\end{equation}
Taking
\begin{equation} \label{eq:c''_1_defn}
  c''_1 \in (1, \frac{2}{1 + c'_1}),
\end{equation}
since $x$ is a constant, it suffices to prove the inequality
\begin{equation} \label{eq:meso_intermediate}
  \Prob \left( G_{(N, N)}(L^{\meso, R}_N(M)) >  \anought N - c'_1(c''_1)^2 M^2 \dnought N^{1/3} \right) < \epsilon
\end{equation}
for all $M > \tilde{M}$ and large enough $N$.

For all $k = 0, 1, 2, \dotsc$ we denote
\begin{equation} \label{eq:defn_I(k)}
  c(k) = (c''_1)^k, \quad C_k = c'_1 (c(k)M)^2, \quad \text{and the interval} \quad I(k) = [c(k - 1)M, c(k)M],
\end{equation}
and define the down-right lattice paths
\begin{equation}
  L(k) = \{ (s \cnought N^{2/3} - l_0(s \cnought N^{2/3}) - [C_k \dnought N^{1/3}],\ -s \cnought N^{2/3} - l_0(s \cnought N^{2/3}) - [C_k \dnought N^{1/3}]) \mid s \in I(k) \}.
\end{equation}
Since on each $I(k)$, $\ell(s) < C_k$ as long as $\ell(s)$ is defined, and $c'_1 (c''_1)^2 M^2 < C_k$ for all $k$, it is clear that if we denote
\begin{equation} \label{eq:defn_Prob(k)}
  \Prob(k) = \Prob \left( (G_{(N, N)}(L(k)) \geq \anought N - C_k \dnought N^{1/3}) \right),
\end{equation}
then as $N$ is large enough,
\begin{equation} \label{eq:meso_intermediate_sum}
  \begin{split}
    & \Prob \left( G_{(N, N)}(L^{\meso, R}_N(M)) >  \anought N - c'_1 (c''_1)^2 M^2 \dnought N^{1/3} \right) \\
    \leq {}& \Prob \left( \max_{1 \leq k \leq [\log N^{c_2}/\log c_1]} (G_{(N, N)}(L(k)) \geq \anought N - C_k \dnought N^{1/3}) \right) \\
    \leq {}& \sum^{[\log N^{c_2}/\log c_1]}_{k = 1} \Prob(k).
  \end{split}
\end{equation}
To estimate $\Prob(k)$, we note that by the choice of $c''_1$ in \eqref{eq:c''_1_defn}, there exist $\delta_1, \delta_2, \delta_3, \delta_4 > 0$ such that $\delta_2 < \delta_3$ and the points
\begin{equation}
  (1, c'_1 (c''_1)^2), \quad (c''_1 + \delta_1, (1 - \delta_3) (c''_1 + \delta_1)^2), \quad (c''_1 + \delta_2, (1 + \delta_4) (c''_1 + \delta_2)^2)
\end{equation}
are collinear. Then by a simple affine transformation, the points
\begin{equation}
  \begin{gathered}
    \left( N + c(k - 1)M \cnought N^{2/3},\ \anought N - C_k \dnought N^{1/3} \right), \\
    \left( N + (c(k) + \delta_1 c(k - 1))M \cnought N^{2/3},\ \anought N - (1 - \delta_3)(1 + \delta_2/c''_1)^2 C_k \dnought N^{1/3} \right), \\
    \left( N + (c(k) + \delta_2 c(k - 1))M \cnought N^{2/3},\ \anought N - (1 + \delta_4)(1 + \delta_2/c''_1)^2 C_k \dnought N^{1/3} \right)
  \end{gathered}
\end{equation}
are collinear, as well as the three points
\begin{subequations}
  \begin{gather}
    \left( N + [c(k - 1)M \cnought N^{2/3}],\ \anought N - C_k \dnought N^{1/3} \right), \\
    \left( N + [(c(k) + \delta_1 c(k - 1))M \cnought N^{2/3}],\ \anought N - (1 - \delta_{3, N, k})(+ \delta_1/c''_2)^2 C_k \dnought N^{1/3} \right), \label{eq:first_point} \\
    \left( N + [(c(k) + \delta_2 c(k - 1))M \cnought N^{2/3}],\ \anought N - (1 + \delta_4)(c(k) + \delta_2)^2 M^2 d_0 N^{1/3} \right) \label{eq:second_point}
  \end{gather}
\end{subequations}
collinear, where $\delta_{3, N, k} \to \delta_3$ as $N \to \infty$ uniformly in $k$. We only need that for $N$ large enough
\begin{equation}
  \delta_{3, N, k} > \frac{\delta_3}{2}.
\end{equation}
Then by using the symmetry of the lattice and applying  Lemma \ref{lem:PNG_max}, we have
\begin{equation}
  \begin{split}
    \Prob(k) \leq {}& \Prob \left( \max^{[c(k)M \cnought N^{2/3} + 1]}_{s = [c(k - 1)M \cnought N^{2/3}]} \G(N + [C_k \dnought N^{1/3}] + s, N + [C_k \dnought N^{1/3}] - s) \geq \anought N - c'_1(c(k) M)^2 \dnought N^{1/3} \right) \\
    \leq {}& (2 + \epsilon_{\min(\delta_1, \delta_2 - \delta_1) \cdot c(k - 1)M \cnought N^{2/3}}) \Prob \bigg( \G \left( N + [C_k \dnought N^{1/3}] + [(c(k) + \delta_1 c(k - 1))M \cnought N^{2/3}], \right. \\
    & \left.  N + [C_k \dnought N^{1/3}] + [(c(k) + \delta_1 c(k - 1))M \cnought N^{2/3}] \right) \geq \anought N - (1 - \delta_{3, N, k}) (1 + \delta_1/c''_2)^2 C_k \dnought N^{1/3} \bigg) \\
    & + \Prob \bigg( \G \left( N + [C_k \dnought N^{1/3}] + [(c(k) + \delta_2 c(k - 1))M \cnought N^{2/3}], \right. \\
    & \left.  N + [C_k \dnought N^{1/3}] + [(c(k) + \delta_2 c(k - 1))M \cnought N^{2/3}] \right) \leq \anought N - (1 + \delta_4)(c(k) + \delta_2/c''_2)^2 C_k \dnought N^{1/3} \bigg),
  \end{split}
\end{equation}
where the term $\epsilon_{\min(\delta_1, \delta_2 - \delta_1) \cdot c(k - 1)M \cnought N^{\frac{2}{3}}}$ is defined in Lemma \ref{lem:PNG_max} and vanishes as $N \to \infty$. An application of Lemma \ref{lem:weaker_technical}, shows that
\begin{equation}
  \Prob(k) < e^{-Mk}
\end{equation}
for large enough $M$. Thus \eqref{eq:meso_intermediate} is proved by taking the sum of $\Prob(k)$ in \eqref{eq:meso_intermediate_sum}.

\subsection{Proof of Theorem \ref{thm:main_TASEP}} \label{sec:proof_thm:TASEP}

  By the relation \eqref{eq:relation_TASEP_LPP}, we have that under the assumption that $2\dnought^* N^{1/3} x$ and $2\sigma \cnought N^{2/3}$ are integers with the same parity,
  \begin{equation}
    \Prob \left( \frac{h_N(2\sigma \cnought N^{\frac{2}{3}}; \anought^* N) - 2N}{2\dnought^* N^{\frac{1}{3}}} > -x \right) = \Prob \left(  G^*_{(N + \sigma \cnought N^{\frac{2}{3}} - \dnought^* N^{\frac{1}{3}} x, N - \sigma \cnought N^{\frac{2}{3}} - \dnought^* N^{\frac{1}{3}} x)}(L^*_N) \leq [\anought^* N] \right).
  \end{equation}
   The above equation implies that the result of Theorem \ref{thm:main_TASEP} amounts to computing the $N \to \infty$ limit of
\begin{equation}
  \Prob \left(  G^*_{(N + \sigma \cnought N^{2/3} - \dnought^* N^{1/3} x, N - \sigma \cnought N^{2/3} - \dnought^* N^{1/3} x)}(L^*_N) \leq [\anought^* N] \right).
\end{equation}
This readily follows from the relation between $G$ and $G^*$ as well as Theorem \ref{thm:main}.

\subsection{Proof of Theorem \ref{thm:inhomogeneous}} \label{sec:proof_lem:technical}

For the proof of the first part of Theorem \ref{thm:inhomogeneous}, we assume $D_N = \realR$ without loss of generality. We express
\begin{equation}
  G^{f_N}_{(N, N)}(L) = \max\left( G^{f_N, \micro}_{(N, N)}(L), G^{f_N, \meso}_{(N, N)}(L), G^{f_N, \macro}_{(N, N)}(L) \right),
\end{equation}
where
\begin{equation}
  G^{f_N, *}_{(N, N)}(L) = \max_{y \in I_*} \,\left( G_{(N, N)}(0, y) - f_N(y)\right), \quad * = \micro, \meso \text{ or } \macro,
\end{equation}
and, letting $M > 0$,
\begin{equation}
  I_* =
  \begin{cases}
    [-M 2\cnought N^{2/3}, M 2\cnought N^{2/3}] & \text{for $* = \micro$}, \\
    [-2\cnought N^{2/3 + c_2}, 2\cnought N^{2/3 + c_2}] \setminus I_{\micro} & \text{for $* = \meso$}, \\
    (-\infty, N] \setminus (I_{\micro} \cup I_{\meso}) & \text{for $* = \macro$}.
  \end{cases}
\end{equation}
Similar to the proof of Theorem \ref{thm:main} in Section \ref{sec:proof_thm:main}, we show that for any fixed $M$ and $\epsilon > 0$, there exists $N_0$ such that for all $N>N_0$,
\begin{equation} \label{eq:micro_estimate_f_N}
  \left\lvert \Prob \left( \frac{G^{f_N, \micro}_{(N, N)}(L) - \anought N}{\bnought N^{1/3}} \leq x \right) - \Prob \left( \max_{s \in [-M, M]} \left(\AiryA(s) - s^2 + \ell(s) \right)\leq x \right) \right\rvert < \epsilon.
\end{equation}
Here, and in what follows, the choice of $N_0$ can be seen to only depend on the parameters $C,c_1,c_2,c_3,a_{\infty},b_{\infty}$ in $\mathrm{Hyp}^{\vert}\big(C,c_1,c_2,c_3,a_{\infty},b_{\infty},\{m_N\}\big)$ and not on the particular form of $f_N$. Then we show that for any fixed $\epsilon > 0$, for all $N>N_0$,
\begin{equation} \label{eq:meso_estimate_f_N}
  \Prob \left( \frac{G^{f_N, \meso}_{(N, N)}(L) - \anought N}{\bnought N^{1/3}} > x \right) < \epsilon,
\end{equation}
and at last show that for any fixed $\epsilon > 0$, for all $N>N_0$,
\begin{equation} \label{eq:macro_estimate_f_N}
  \Prob \left( \frac{G^{f_N, \macro}_{(N, N)}(L) - \anought N}{\bnought N^{1/3}} > x \right) < \epsilon.
\end{equation}
Thus we have proved Theorem \ref{thm:inhomogeneous}.

We turn now to prove \eqref{eq:micro_estimate_f_N}. Recall the relation between $f_N$ and $\ell(s) + l_N(s)$ defined in \eqref{eq:f_N_char} of $\mathrm{Hyp}^{\vert}\big(C,c_1,c_2,c_3,a_{\infty},b_{\infty},\{m_N\}\big)$. If follows that
\begin{equation}
  \Prob \bigg( \frac{G^{f_N, \micro}_{(N, N)}(L) - \anought N}{\bnought N^{1/3}} \leq x \bigg) = \Prob \Big( \max_{s \in [-M, M]} \big(\tilde{H}_N(s) + \ell(s) + l_N(s)\big) \leq x \Big),
\end{equation}
where $\tilde{H}_N(s)$ is defined in \eqref{eq:defn_of_tilde_H}, with $\alpha = 2/3$ and $\ell_N(s) = \cnought s$. Note that here $\ell_N(s)$ defines the shape of the straight line $L$, so it is a linear function. Unlike $\ell_N(s)$ in \eqref{eq:micro_estimate1}, where $\ell_N(s) = \ell(s) + l_N(s)$, here $\ell(s) + l_N(s)$ defines $f_N$ but not the shape of $L$ or the function $\ell_N(s)$. Then using the convergence results in Theorem \ref{thm:uniform_slow_decorr} and Proposition \ref{prop:Johansson_weak}, we arrive at \eqref{eq:micro_estimate_f_N} by an argument similar to that of Section \ref{sec:micro_window}.

To prove \eqref{eq:meso_estimate_f_N}, we use a simple inequality that for any lattice points $(x_0, y_0)$, $(x, y)$ and $(x', y')$ such that $x_0 \geq x \geq x'$ and $y_0 \geq y \geq y'$, we have
\begin{equation}
  G_{(x_0, y_0)}(x, y) \leq G_{(x_0, y_0)}(x', y') - G_{(x, y)}(x', y').
\end{equation}
Now we take $(x_0, y_0) = (N, N)$, $(x, y) = (0, s)$ where the integer $s \in I_{\meso}$ and corresponding to $(x, y)$, with the same $s$,
\begin{equation}
  (x', y') =
  \begin{cases}
    (-[M \cnought N^{2/3}] - \frac{s}{2},\ -[M \cnought N^{2/3}] + \frac{s}{2}) & \text{if $s$ is even}, \\
    (-[M \cnought N^{2/3}] - \frac{s - 1}{2},\ -[M \cnought N^{2/3}] + \frac{s + 1}{2}) & \text{if $s$ is odd}.
  \end{cases}
\end{equation}
It is easy to see that if we prove that if $M$ is large enough, then for all large enough $N$,
\begin{equation} \label{eq:Gibbs_for_f_N}
  \Prob \left( \max_{s \in \intZ \text{ and } s \in I_{\meso}} G_{(N, N)}(x', y') \geq \anought N + \anought M \cnought N^{\frac{2}{3}} + x \bnought N^{1/3} \right) < \frac{\epsilon}{2},
\end{equation}
and uniformly for all $s \in \intZ \cap I_{\meso}$, if $N$ is large enough,
\begin{equation} \label{eq:ptp_f_N}
  \Prob \left( G_{(x, y)}(x', y') \leq \anought M \cnought N^{2/3} - f_N \left( \frac{s}{2\cnought N^{2/3}} \right) \right) < \frac{\epsilon}{2} \frac{1}{4\cnought N^{2/3 + c_2}},
\end{equation}
then \eqref{eq:meso_estimate_f_N} is proved.

The inequality \eqref{eq:Gibbs_for_f_N} is analogous to \eqref{eq:meso_intermediate} and can be proved by the arguments used in Section \ref{sec:meso_window}. The inequality \eqref{eq:ptp_f_N} is a direct consequence of Proposition \ref{prop:previous_pt_to_pt}\ref{enu:prop:previous_pt_to_pt_2}. Then the proof of Theorem \ref{thm:inhomogeneous} is complete.

To prove \eqref{eq:macro_estimate_f_N}, we estimate the probability that the point-to-point LPP $\Prob(G_{(N, N)}(0, s) - f(N) > \bnought N^{1/3} x)$ by Lemma \ref{lem:weaker_technical} for all $s \in \intZ \cap I_{\macro}$, and then sum up all these probabilities as an upper bound of the left-hand side of \eqref{eq:macro_estimate_f_N}. The argument is similar to the proof of \eqref{eq:macro_estimate} and the detail is omitted.

The proof of the second part of Theorem \ref{thm:inhomogeneous} is similar. We divide the $\llcorner$-shaped path $\tilde{L}$ defined in \eqref{eq:L_shaped_L_tilde} into the ``micro'', ``meso'' and ``macro'' parts according to the distance to the corner $(0, 0)$, and use the three methods to estimate the point-to-curve LPP between $(N, N)$ to them, as above. We omit the details.

\section{Proofs of Corollaries \ref{thm:Bernoulli_initial_condition} and \ref{cor:corrollaries_of_inhomogeneous_LPP}} \label{sec:Proofs_of_corollaries}

\subsection{Proof of Corollary \ref{cor:corrollaries_of_inhomogeneous_LPP}\ref{thm:spiked_border} and \ref{thm:double_spiked_border}} \label{subsec:proof_spiked_border}

Parts \ref{thm:spiked_border} and \ref{thm:double_spiked_border} of Corollary \ref{cor:corrollaries_of_inhomogeneous_LPP} are direct consequences of the first and second parts of Theorem \ref{thm:inhomogeneous}, respectively. We only give detail of the proof of part \ref{thm:spiked_border}, since that of part \ref{thm:double_spiked_border} is similar.

Define the random function $f(x)$ on the domain $D = [0, \infty)$ by
\begin{equation}
  f(x) = -\G(k - 1, x)
\end{equation}
where the weight on the lattice is assumed to be inhomogeneous and the weights $w(i, j)$ with $i = 1, \dotsc, k$ are specified by \eqref{eq:weight_k_borders}. Then the point-to-point LPP $\G(N, N)$ is expressed as
\begin{equation}
  \G(N, N) = \max_{x \in [0, N]} G_{(N, N)}(k, x) - f(x).
\end{equation}
We see that the $\G(N, N)$ on the lattice with inhomogeneous weights has the same distribution as $G^{f}_{(N - k, N)}(L)$, where the notation is the same as in Theorem \ref{thm:inhomogeneous}. As $N \to \infty$, we have ($\tilde{G}$ is defined in \eqref{eq:defn_tilde_G})
\begin{equation} \label{eq:approx_by_square}
  \lim_{N \to \infty} \Prob (\tilde{G}_N \leq x) = \lim_{N \to \infty} \Prob \left( \frac{G^f_{(N, N)}(L) - \anought N}{\bnought N^{1/3}} \leq x \right).
\end{equation}
Although the random function $f(x)$ is not in the form of $f_N(x)$ in \eqref{eq:f_N_char}, the difference is only a constant term. We write for any $N$
\begin{equation}
  f(2s\cnought N^{\frac{2}{3}}) = -s\anought\cnought N^{\frac{2}{3}} - \ell_N(s)\dnought N^{\frac{1}{3}}.
\end{equation}
For any $\epsilon > 0$, by choosing the constant $C$ properly, the inequality
\begin{equation} \label{eq:estimate_f_growth}
  \ell_N(s) < C + \frac{1}{2} s^2
\end{equation}
is satisfied with probability at least $1 - \epsilon$. This is because $k$ is fixed and $\ell_N(s)$ behaves like the maximum of a $k$-particle Dyson Brownian motion which can easily be bounded by quadratic growth in time. So by Theorem \ref{thm:inhomogeneous}, given any $\epsilon > 0$, for large enough $N$
\begin{equation} \label{eq:cor_by_thm}
  \left\lvert \Prob \left( \frac{G^f_{(N, N)}(L) - \anought N}{\bnought N^{1/3}} \leq x \right) - \Prob \left( \max_{s \in (0, \infty)} \left( \AiryA(s) - s^2 + \ell_N(s) \right) \leq x \right) \right\rvert < \epsilon.
\end{equation}
 Furthermore, it is not hard to see that the random function $\ell_N(s)$ converges weakly to
\begin{equation}
\max_{0=s_0\leq s_1\leq\cdots\leq s_k=s}  \sqrt{2} \sum^k_{i = 1} \left(\B_i(s_i) - \B_i(s_{i - 1})\right) - 4\sum^k_{i = 1} w_i(s_i - s_{i - 1}) - s^2_k
\end{equation}
on any compact interval. At last, the weak convergence of $l^{(N)}(s)$, together with the estimate \eqref{eq:estimate_f_growth} and Proposition \ref{prop:Airy_property}\ref{enu:prop:Airy_property:b}, implies that
\begin{multline} \label{eq:final_step_BM}
  \lim_{N \to \infty} \Prob \left( \max_{s \in (0, \infty)} \left( \AiryA(s) - s^2 + \ell_N(s) \right) \leq x \right) = \\
  \Prob \left( \max_{0 = s_0 \leq s_1 \leq \dotsb \leq s_k \leq M} \left(\AiryA(s_k) + \sqrt{2} \sum^k_{i = 1} \left(\B_i(s_i) - \B_i(s_{i - 1})\right) - 4\sum^k_{i = 1} w_i(s_i - s_{i - 1}) - s^2_k\right) \leq x \right).
\end{multline}
Combining \eqref{eq:approx_by_square}, \eqref{eq:cor_by_thm} and \eqref{eq:final_step_BM}, we prove Corollary \ref{cor:corrollaries_of_inhomogeneous_LPP}\ref{thm:spiked_border}.

\subsection{Proof of Corollary \ref{cor:corrollaries_of_inhomogeneous_LPP}\ref{thm:inner_spiked_border}}

Let the weights $w(i, j)$ be defined as in Corollary \ref{cor:corrollaries_of_inhomogeneous_LPP}\ref{thm:inner_spiked_border}. Define the stochastic processes $B_{1, N}, \dotsc, B_{k, N}$ as
\begin{equation}
  B_{i, N}(s) =
  \begin{cases}
    \frac{1}{\bnought N^{1/3}} \left( G_{([\alpha N] + i, [\alpha N])}([\alpha N] + i, \alpha N + 2\cnought N^{\frac{2}{3}} s) - \anought\cnought N^{2/3} s \right) & \text{if $s \geq 0$}, \\
    \frac{1}{\bnought N^{1/3}} \left( -\G_{([\alpha N] + i, [\alpha N])}([\alpha N] + i, \alpha N + 2\cnought N^{\frac{2}{3}} s) - \anought\cnought N^{2/3} s \right) & \text{if $s < 0$}.
  \end{cases}
\end{equation}
Then we have the weak convergence
\begin{equation} \label{eq:weak*_middle_collumns}
  B_{i, N}(s) \Rightarrow \sqrt{2}\B_i(s) + 4w_i s
\end{equation}
on any compact interval as $N \to \infty$, where $\B_1(s), \dotsc, \B_k(s)$ are independent two-sided Brownian motions.

Next define the stochastic processes
\begin{align}
  A^{(1)}_N(s) = {}& \frac{1}{\bnought N^{1/3}} \left( \G([\alpha N], [\alpha N] - 2\cnought N^{2/3} s) - \anought(\alpha N - \cnought N^{2/3} s) \right), \\
  A^{(2)}_N(s) = {}& \frac{1}{\bnought N^{1/3}} \left( G_{(N, N)}([\alpha N] + k + 1, [\alpha N] + 2\cnought N^{2/3} s) - \anought(\alpha N - \cnought N^{2/3} s) \right),  \label{eq:weak*_converg_A^(2)_N}
\end{align}
By Theorem \ref{thm:uniform_slow_decorr} and Proposition \ref{prop:Johansson_weak}, we have the weak convergence that on any interval $[-M, M]$ as $N \to \infty$
\begin{equation} \label{eq:two_Airy_convergence_weak*}
  A^{(1)}_N(s) \Rightarrow \alpha^{1/3} \AiryA^{(1)}(\alpha^{-2/3} s) - \frac{s^2}{\alpha}, \quad A^{(2)}_N(s) \Rightarrow (1 - \alpha)^{1/3} \AiryA^{(2)}((1 - \alpha)^{-2/3} s) - \frac{s^2}{1 - \alpha},
\end{equation}
where $\AiryA^{(1)}(s)$ and $\AiryA^{(2)}(s)$ are two independent Airy processes.

We denote the three regions of $\realR^{k + 1}$
\begin{equation}
  \begin{split}
    R_1(M) = {}& \{ (s_0, s_1, \dotsc, s_k) \mid -M \leq s_0 \leq s_1 \leq \dotsb \leq s_k \leq M \}, \\
    R_2(M) = {}& \{ (s_0, s_1, \dotsc, s_k) \mid   s_0 \leq s_1 \leq \dotsb \leq s_k \leq M \text{ and } s_0 < -M \}, \\
    R_2(M) = {}& \{ (s_0, s_1, \dotsc, s_k) \mid  -M \leq s_0 \leq s_1 \leq \dotsb \leq s_k \text{ and } s_k > M \},
  \end{split}
\end{equation}
and write
\begin{equation}
  \frac{\G(N, N) - \anought N}{\bnought N^{1/3}} = \max \left( G^{(1)}_N(M), G^{(2)}_N(M), G^{(3)}_N(M) \right),
\end{equation}
where for $i = 1, 2, 3$,
\begin{equation}
  \begin{split}
    G^{(i)}_N(M) = {}& \frac{1}{\bnought N^{1/3}} \max_{(s_0, \dotsc, s_k) \in R_i(M)} \left( \vphantom{\sum^k_{i = 1}} \G([\alpha N], [\alpha N] + [2\cnought N^{2/3} s_0]) \right. \\
    & + \sum^k_{i = 1} \G_{([\alpha N] + i, [2\cnought N^{2/3} s_{i - 1}])}([\alpha N] + i, [2\cnought N^{2/3} s_i]) \\
    & \left. \vphantom{\sum^k_{i = 1}} + G_{(N, N)}([\alpha N] + k + 1, [2\cnought N^{2/3} s_k]) - \anought N \right) \\
    = {}& \max_{(s_0, \dotsc, s_k) \in R_i(M)} \left( A^{(1)}_N\left(\frac{ [2\cnought N^{2/3} s_0]}{2\cnought N^{2/3}}\right) + A^{(2)}_N\left(\frac{ [2\cnought N^{2/3} s_k]}{2\cnought N^{2/3}} \right) \right. \\
      & + \left. \sum^k_{i = 1} \left(\tilde{B}_{i, N}\left(\frac{ [2\cnought N^{2/3} s_i]}{2\cnought N^{2/3}}\right) - \tilde{B}_{i, N}\left(\frac{ [2\cnought N^{2/3} s_{i-1}]-1}{2\cnought N^{2/3}}\right)\right) \right).
  \end{split}
\end{equation}

It is a direct consequence of the convergence results \eqref{eq:weak*_middle_collumns} and \eqref{eq:two_Airy_convergence_weak*} that for any $M > 0$
\begin{multline} \label{eq:estimate_G^(6)}
  \lim_{N \to \infty} \Prob \left( G^{(1)}_N(M) \leq x \right) =
  \Prob \left( \max_{-M \leq s_0 \leq s_1 \leq \dotsb \leq s_k \leq M}\left (\alpha^{1/3} \AiryA^{(1)}(\alpha^{-2/3} s_0) + \sqrt{2} \sum^k_{i = 1} \left(\B_i(s_i) - \B_i(s_{i - 1})\right) \right.\right. \\
  +\left. ( 1 - \alpha)^{1/3} \AiryA^{(2)}((1 - \beta)^{-2/3} s_k)
  \left. \vphantom{\max_{s_0 \leq s_1 \leq \dotsb \leq s_k}\sum^k_{i = 1}} - 4\sum^k_{i = 1} w_i(s_i - s_{i - 1}) - \frac{s^2_0}{\alpha} - \frac{s^2_k}{1 - \alpha}\right) \leq x \right).
\end{multline}

To estimate $G^{(2)}_N(M)$, we recall the hypothesis $\mathrm{Hyp}^{\vert}\big(C,c_1,c_2,c_3,a_{\infty},b_{\infty},\{m_N\}\big)$ in Definition \ref{inhomogeneous_hypo} for Theorem \ref{thm:inhomogeneous}, and define analogously the function (cf.~\eqref{eq:outer_f_N^max} with $c_1 = 1/2$ and $c_3 = 1/100$)
\begin{equation}
  L_N(x) = N \max \left( \anought - \frac{\anought x/N}{2} - \frac{1}{2} \dnought \left( \frac{x/N}{2\cnought} \right)^2,\ a_0 \left( 1 - \frac{x}{N} \right) + \frac{1}{100} \left\lvert \frac{x}{N} \right\rvert \right).
\end{equation}
For a constant $K$ (to be chosen suitably large in what follows) also define the function (cf.~\eqref{eq:upper_bound_l} with $C = K$ and $c_1 = 1/2$)
\begin{equation}
  \ell^{\max}_N(s) = \max \left( K + \frac{s^2}{2}, \frac{1}{\dnought N^{\frac{1}{3}}} \left( \anought N - s\anought\cnought N^{\frac{2}{3}} - L_N(2s\cnought N^{\frac{2}{3}}) \right) \right).
\end{equation}
Then we have
$
  \Prob \left( G^{(2)}_N(M) \geq x \right) \leq \Prob_1 + \Prob_2 + \Prob_3 + \Prob_4 + \Prob_5,
$
where
\begingroup
\allowdisplaybreaks
\begin{align}
  \Prob_1 = {}& \Prob \bigg( \max_{s_0 < -M} A^{(1)}_N\Big(\frac{[2\cnought N^{2/3}s_0]}{2\cnought N^{2/3}}\Big) \geq -\alpha^{1/3} \ell^{\max}_N(\alpha^{-2/3} s_0) \bigg), \\
  \Prob_2 = {}& \Prob \bigg( \max_{s_k \in \realR \setminus (-M, M)} A^{(2)}_N\Big(\frac{[2\cnought N^{2/3}s_k]}{2\cnought N^{2/3}}\Big) \geq -(1 - \alpha)^{1/3} \ell^{\max}_N((1 - \alpha)^{-2/3} s_k) \bigg), \\
  \Prob_3 = {}& \Prob \bigg( \max_{s_k \in [-M, M]} A^{(2)}_N\Big(\frac{[2\cnought N^{2/3}s_k]}{2\cnought N^{2/3}}\Big) \geq K \bigg), \\
  \Prob_4 = {}& \Prob \bigg( \max_{\substack{s_0 \leq \dotsb \leq s_k \\ s_0 < -M, \\ s_k \in [-M, M]}} \sum^k_{i = 1} \Big(\tilde{B}_{i, N}\Big(\frac{[2\cnought N^{2/3}s_i]}{2\cnought N^{2/3}}\Big) - \tilde{B}_{i, N}\Big(\frac{[2\cnought N^{2/3}s_{i-1}]-1}{2\cnought N^{2/3}}\Big)\Big) \bigg. \notag \\
  & \bigg. \phantom{\max_{\substack{s_0 \leq \dotsb \leq s_k \\ s_0 < -M, \\ s_k \in [-M, M]}} \sum^k_{i = 1}} \geq \alpha^{1/3} \ell^{\max}_N(\alpha^{-2/3} s_0) - K + x \bigg), \\
  \Prob_5 = {}& \Prob \bigg( \max_{\substack{s_0 \leq \dotsb \leq s_k \\ s_0 < -M, \\ \lvert s_k \rvert \in [M, (1 - \alpha)N/(2c_0 N^{2/3})]}}  \sum^k_{i = 1} \Big(\tilde{B}_{i, N}\Big(\frac{[2\cnought N^{2/3}s_i]}{2\cnought N^{2/3}}\Big) - \tilde{B}_{i, N}\Big(\frac{[2\cnought N^{2/3}s_{i-1}]-1}{2\cnought N^{2/3}}\Big)\Big) \bigg. \notag \\*
  & \bigg.  \phantom{\max_{\substack{s_0 \leq s_1 \leq \dotsb \leq s_k \\ s_0 < -M, \\ \lvert s_k \rvert \in [M, (1 - \alpha)N/(2c_0 N^{2/3})]}}}\ge \alpha^{1/3} \ell^{\max}_N(\alpha^{-2/3} s_0) + (1 - \alpha)^{1/3} \ell^{\max}_N((1 - \alpha)^{-2/3} s_k) + x \bigg).
\end{align}%
\endgroup
Now we assume $\epsilon > 0$ is a small constant. As in the proof of Theorem \ref{thm:inhomogeneous}, we have that if $M$ is large enough, then for all large enough $N$,
\begin{equation}
  \Prob_1 < \epsilon, \quad  \Prob_2 < \epsilon.
\end{equation}
By the property of the Airy process in Lemma \ref{prop:Airy_property} and the convergence \eqref{eq:two_Airy_convergence_weak*} of $A_N^{(2)}$ to the Airy process, we have that for all $M>0$ there exists an $K > 0$ depending on $\epsilon$ such that the inequality
\begin{equation} \label{eq:determination_of_M'}
  \Prob_3 < \epsilon
\end{equation}
holds. In fact, since we have just argued that $\Prob_2 < \epsilon$ for $M$ large enough, it follows that once $M$ is large enough, $K$ can be chosen independent of $M$ (in fact the same $K$ can then be used for $M$ small as well by obvious containment of sets).
By a standard argument for random walks, we find that if $K$ depends on $\epsilon$ as in \eqref{eq:determination_of_M'} but not $M$, and $M$ is large enough, then for all $N$ large enough,
\begin{equation}
  \Prob_4 < \epsilon, \quad \text{and} \quad \Prob_5 < \epsilon.
\end{equation}
Hence we conclude that if $M$ is large enough, then for all $N$ large enough,
\begin{equation} \label{eq:estimate_G^(7)}
  \Prob \left( G^{(2)}_N(M) \geq x \right) < 5\epsilon.
\end{equation}
By a parallel argument, we have that if $M$ is large enough, then for all $N$ large enough,
\begin{equation} \label{eq:estimate_G^(8)}
  \Prob \left( G^{(3)}_N(M) \geq x \right) < 5\epsilon.
\end{equation}
Finally, by \eqref{eq:estimate_G^(6)}, \eqref{eq:estimate_G^(7)} and \eqref{eq:estimate_G^(8)}, together with Proposition \ref{prop:previous_pt_to_pt}\ref{enu:prop:previous_pt_to_pt_2},
\begin{equation}
\begin{split}
  &\lim_{M \to \infty} \Prob \left( \max_{-M \leq s_0 \leq s_1 \leq \dotsb \leq s_k \leq M} \left(\alpha^{1/3} \AiryA^{(1)}(\alpha^{-2/3} s_0) + \sqrt{2} \sum^k_{i = 1} \B_i(s_i) - \B_i(s_{i - 1}) \right. \right.\\
 & \qquad\qquad\qquad + ( 1 - \alpha)^{1/3} \AiryA^{(2)}((1 - \beta)^{-2/3} s_k)
  \left. \left.\vphantom{\max_{s_0 \leq s_1 \leq \dotsb \leq s_k}\sum^k_{i = 1}} - 4\sum^k_{i = 1} w_i(s_i - s_{i - 1}) - \frac{s^2_0}{\alpha} - \frac{s^2_k}{1 - \alpha}\right) \leq x \right)  \\
 &= \Prob \left( \max_{-\infty \leq s_0 \leq s_1 \leq \dotsb \leq s_k \leq \infty}\left (\alpha^{\frac{1}{3}} \AiryA^{(1)}(\alpha^{-\frac{2}{3}} s_0) + \sqrt{2} \sum^k_{i = 1} (\B_i(s_i) - \B_i(s_{i - 1}) )\right.\right. \\
 & \qquad\qquad\qquad\left.+ ( 1 - \alpha)^{1/3} \AiryA^{(2)}((1 - \beta)^{-2/3} s_k)
  \left. \vphantom{\max_{s_0 \leq s_1 \leq \dotsb \leq s_k}\sum^k_{i = 1}} - 4\sum^k_{i = 1} w_i(s_i - s_{i - 1}) - \frac{s^2_0}{\alpha} - \frac{s^2_k}{1 - \alpha}\right) \leq x \right),
  \end{split}
\end{equation}
we prove part \ref{thm:inner_spiked_border} of Corollary \ref{cor:corrollaries_of_inhomogeneous_LPP}.

\subsection{Proof of Corollary \ref{thm:Bernoulli_initial_condition}}

Since all the five parts of the corollary are similar, we only prove part \ref{enu:thm:Bernoulli_initial_condition:d} as the proofs of the other four parts are analogous or easier.

The random function $h^{\flatBernoulli}$ in \eqref{eq:initial_flat_Bernoulli} defines a random polygonal chain $L^{\flatBernoulli}$ by \eqref{eq:L_by_h}, we define a function $\ell_N(s)$ associated to it by the relation
\begin{equation}
  L^{\flatBernoulli} = \Big\{ \big(s\cnought N^{2/3} - \ell_N(s) \dnought^*N^{1/3}, -s\cnought N^{2/3} - \ell_N(s)\dnought^*N^{1/3}\big) \big\vert   s \in \realR \Big\}.
\end{equation}
Then $\ell_N(s)$ is a continuous function such that it is deterministic for $s < 0$ and random for $s > 0$. It is clear that for $s > 0$, $\ell_N(s)$ is mapped to the path of a simple symmetric random walk, such that
\begin{equation} \label{eq:corresp_RW}
  \left( 2\dnought^* N^{1/3} \ell_N\left(\frac{k}{2\cnought N^{2/3}}\right),\ k = 1, 2, \dotsc \right) \sim \left( \sum^k_{i = 1} X_i,\ k = 1, 2, \dotsc \right),
\end{equation}
where $X_i$ are in \iid\ distribution with $\Prob(X_i = -1) = \Prob(X_i = 1) = 1/2$.

Now we consider $\mathrm{Hyp}^*\big(C,c_1,c_2,c_3^*,a_{\infty},b_{\infty},\{m_N\}\big)$ defined in Definition \ref{thmhypo} and let $c_1 = 1/2,$, $c_2 = 1/6$, $c^*_3 = -1/100$, $a_{\infty} = -\infty$, $b_{\infty} = +\infty$, $m_N = 0$. We claim that for any $\epsilon > 0$, there is a large enough constant $C_{\epsilon}$ such that if we let $C = C_{\epsilon}$, then if $N$ is large enough, with probability greater than $1 - \epsilon$, $\mathrm{Hyp}^*\big(C,c_1,c_2,c_3^*,a_{\infty},b_{\infty},\{m_N\}\big)$ is satisfied by $\ell(s) = \ell_N(s)$, $l_N(s) = 0$, and $L^*_N = L^{\flatBernoulli}$. To check it, we note that for $s \leq 0$, $\ell_N(s)$ is a deterministic function whose value is close to $0$, and on the ``flat'' part of $L^{\flatBernoulli}$, that is, where the $x$-coordinate is negative, $L^{\flatBernoulli}$ is a deterministic saw-tooth curve. It is clear that $\ell_N(s)$ for $s \leq 0$ satisfy inequality \eqref{eq:upper_bound_l}, and the ``flat'' part of $L^{\flatBernoulli}$ satisfies \eqref{eq:region_D} and \eqref{eq:L_N_outside}. On the other hand, $\ell_N(s)$ for $s > 0$ is defined by the simple symmetric random walk in \eqref{eq:corresp_RW}. It is well known that the path of a simple symmetric random walk is bounded by a parabola with probability close to $1$ provided that the parabola is high enough. Thus if $C = C_{\epsilon}$ is large enough, with probability $> 1 - \epsilon/2$, $\ell_N(s) < C + c_1 s^2$ for all $N$ large enough and then \eqref{eq:upper_bound_l} holds. Since $L^{\flatBernoulli}$ is also defined by the simple symmetric random walk, similarly \eqref{eq:region_D} and \eqref{eq:L_N_outside} are satisfied with probability $1 - \epsilon/2$ if $N$ is large enough. Thus we prove the claim.

By Theorem \ref{thm:main_TASEP}, we have that if the coefficients $C, c_1, c_2, c^*_3$ are chosen as above, then for large enough $N$
\begin{equation}
  \left\lvert \Prob \left( \frac{h^{\flatBernoulli}(2\sigma \cnought N^{\frac{2}{3}}; \anought^* N) - 2N}{2\dnought^* N^{\frac{1}{3}}} > -x \right) - \Prob \left( \max_{s \in \realR} \AiryA(s) - (s - \sigma)^2 + \ell_N(s) < x \right) \right\rvert < \epsilon,
\end{equation}
where $\AiryA(s)$ is an Airy process.

It is clear that as $N \to \infty$, on $(-\infty, 0]$, $\ell_N(s)$ uniformly converges to the constant function $0$. On the other hand, for positive $s$, by the correspondence \eqref{eq:corresp_RW} and Donsker's theorem, $\ell_N(s)$ weakly converges to $\sqrt{2} q^{-1/4}\B(s)$, where $\B(s)$ is a standard Brownian motion and the constant factor is the ratio
\begin{equation}
  \sqrt{2} q^{-1/4} = \frac{\sqrt{2\cnought N^{2/3}}}{2\dnought^* N^{1/3}}
\end{equation}
where $\cnought$ is defined in \eqref{eq:defn_nought} and $\dnought^*$ is defined in \eqref{eq:defn_noughtstar}. By an argument like that between \eqref{eq:cor_by_thm} and \eqref{eq:final_step_BM} in the proof of Corollary \ref{cor:corrollaries_of_inhomogeneous_LPP}\ref{thm:spiked_border}, we prove part \ref{enu:thm:Bernoulli_initial_condition:d} of Corollary \ref{thm:Bernoulli_initial_condition}.

\section{Proof of Theorem~\ref{thm:uniform_slow_decorr}} \label{sec:uniform_slow_decor}

Let $C'_N=[(C+1)N^\alpha]/N^\alpha$ which depends on $N$ and lies in the interval $[C,C+1]$. Define
 \begin{equation}
 H_{N,\pm}(s):=\frac{1}{\bnought N^{1/3}}\left(\G\left(N\pm 2C'_NN^\alpha+s\cnought N^{2/3}, N\pm 2C'_NN^\alpha-s \cnought N^{2/3}\right)-\anought\left(N\pm 2C'_NN^\alpha\right)\right)
 \end{equation}
 for all $s\in[-M,M]$. It is a direct to check that
 \begin{equation}
 \label{eq:relation_H_pm_and_H}
 H_{N,\pm}(s)=\left(1+2C'_NN^{\alpha-1}\right)^{-1/3}H_{N\pm 2C'_NN^\alpha}\left(s+O(N^{\alpha-1})\right),
 \end{equation}
 where the term $O(N^{\alpha-1})$ is independent of $s$.

We first prove the following claim.
\begin{claim}
\label{claim:comparison_H_pm_and_H}
For any given $\epsilon, \delta>0$, there exists a constant $N_1$ which only depends on $M,\alpha$ and $C$ such that
\begin{equation}
 \label{eq:aux_estimate_H}
 \Prob\left(\max_{s\in[-M,M]}|H_{N,\pm}(s)-H_N(s)|\ge\frac{\delta}{2}\right)<\frac{\epsilon}{2}
 \end{equation}
 for all $N>N_1$.
\end{claim}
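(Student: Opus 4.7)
\textbf{Proof proposal for Claim \ref{claim:comparison_H_pm_and_H}.}
The plan is to combine the (pointwise) slow decorrelation statement of \cite{Corwin-Ferrari-Peche12} with a uniform equicontinuity estimate coming from tightness. Relation \eqref{eq:relation_H_pm_and_H} exhibits $H_{N,\pm}(s)$ as the process $H_{N \pm 2C'_N N^\alpha}(\cdot)$ evaluated at the shifted point $s + O(N^{\alpha-1})$ and multiplied by $(1+2C'_N N^{\alpha-1})^{-1/3}$. Because $\alpha \in (0,1)$, both the shift $O(N^{\alpha-1})$ and the deviation $1-(1+2C'_N N^{\alpha-1})^{-1/3} = O(N^{\alpha-1})$ vanish as $N \to \infty$, uniformly in $s \in [-M,M]$ and uniformly over the allowed $C$. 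So it suffices to control $H_{N \pm 2C'_N N^\alpha}(s') - H_N(s)$ for $|s-s'| = O(N^{\alpha-1})$.

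First I would establish uniform equicontinuity. By Proposition \ref{prop:Johansson_weak}, $H_N \Rightarrow \AiryA(\cdot) - (\cdot)^2$ in $\mathcal{C}([-M-1, M+1], \realR)$, and the same holds for the sequence $H_{N \pm 2C'_N N^\alpha}$ (since $2C'_N N^\alpha = o(N)$). Weak convergence in $\mathcal{C}$ implies tightness, which by Arzel\`a--Ascoli yields the modulus-of-continuity bound: for every $\eta>0$ there exist $\rho>0$ and $N_0$ (depending only on $M,\alpha,C$) such that for all $N>N_0$,
\begin{equation}
\Prob\Bigl(\sup_{\substack{s,t \in [-M-1,M+1]\\ |s-t|<\rho}} |H_N(s) - H_N(t)| > \eta\Bigr) < \eta,
\end{equation}
and the analogous bound holds with $H_N$ replaced by $H_{N \pm 2C'_N N^\alpha}$.

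Next I would invoke pointwise slow decorrelation: for each fixed $s_0 \in \realR$, \cite{Corwin-Ferrari-Peche12} gives
\begin{equation}
H_N(s_0) - (1+2C'_N N^{\alpha-1})^{-1/3} H_{N \pm 2C'_N N^\alpha}(s_0 + O(N^{\alpha-1})) \longrightarrow 0 \text{ in probability.}
\end{equation}
Choose a finite grid $s_1,\ldots,s_k \in [-M,M]$ with spacing less than $\rho$ (so $k$ depends only on $M/\rho$, hence on $M,\alpha,C,\epsilon,\delta$). Pointwise slow decorrelation together with the vanishing of the multiplicative factor and of the shift gives, for all $N$ large enough,
\begin{equation}
\Prob\bigl(|H_{N,\pm}(s_i) - H_N(s_i)| > \delta/6\bigr) < \epsilon/(4k) \quad \text{for each } i = 1,\ldots,k.
\end{equation}
Finally, for any $s \in [-M,M]$, picking $s_i$ within $\rho$ of $s$ and applying the triangle inequality
\begin{equation}
|H_{N,\pm}(s) - H_N(s)| \leq |H_{N,\pm}(s) - H_{N,\pm}(s_i)| + |H_{N,\pm}(s_i) - H_N(s_i)| + |H_N(s_i) - H_N(s)|
\end{equation}
lets me bound the outer two terms by $\eta = \delta/6$ via equicontinuity and the middle by $\delta/6$ via slow decorrelation. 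A union bound over the $k$ grid points, together with the two equicontinuity events, gives the required estimate \eqref{eq:aux_estimate_H} for $N$ larger than some $N_1$ that depends only on $M,\alpha,C,\epsilon,\delta$ (the dependence on $\epsilon,\delta$ is harmless since the claim is quantified over them).

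The main obstacle I anticipate is verifying that the pointwise slow decorrelation result as stated in \cite{Corwin-Ferrari-Peche12} applies at the shifted point $s_i + O(N^{\alpha-1})$ (rather than at a fixed point) with the required rate, and that the tightness/modulus-of-continuity estimate holds uniformly in the parameter $C \in [C,C+1]$ governing $C'_N$. Both should follow either from the statements in \cite{Corwin-Ferrari-Peche12} directly or from a routine combination of pointwise decorrelation with the equicontinuity above (using it a second time to absorb the $O(N^{\alpha-1})$ shift).
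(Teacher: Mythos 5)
Your proposal follows essentially the same route as the paper's own proof: establish a modulus-of-continuity (tightness) estimate for both $H_N$ and $H_{N,\pm}$ via \eqref{eq:relation_H_pm_and_H}, apply the pointwise slow decorrelation of \cite{Corwin-Ferrari-Peche12} on a finite grid, and interpolate via the triangle inequality and a union bound. The concern you flag at the end about the $O(N^{\alpha-1})$ shift is resolved exactly as you anticipate---the paper absorbs it into the equicontinuity estimate and applies the cited slow decorrelation directly to the grid-point differences $H_{N,\pm}(t_j)-H_N(t_j)$---so no separate argument is needed.
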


To see this we first note that $H_N(s)$ is tight (see \cite[Lemma 5.3.]{Johansson03}), \ie, there exist constant $\delta'>0$ and $N'_1>0$ which only depend on $M, \epsilon$ and $\delta$ such that
\begin{equation}
\label{eq:tight_H_N}
\Prob\left(\max_{|s_1|,|s_2|\le M, |s_1-s_2|\le \delta'}\lvert H_N(s_1)-H_N(s_2)\rvert\ge\frac{\delta}{6}\right)<\frac{\epsilon}{6}
\end{equation}
for all $N\ge N'_1$.

The relation~\eqref{eq:relation_H_pm_and_H} implies that $H_{N,\pm}(s)$ are also tight. Therefore there exist constant $\delta''>0$ and $N''_1>0$ which only depend on $M,\epsilon$ and $\delta$ such that
\begin{equation}
\label{eq:tight_H_pm_N}
\Prob\left(\max_{|s_1|,|s_2|\le M, |s_1-s_2|\le \delta''}\lvert H_{N,\pm}(s_1)-H_{N,\pm}(s_2)\rvert\ge\frac{\delta}{6}\right)<\frac{\epsilon}{6}
\end{equation}
for all $N\ge N''_1$.

Now we fix $\delta'$ and $\delta''$, and denote $t_j=j\cdot\min\{\delta',\delta''\}$ for all integers $j$ such that $-M\le t_j\le M$. By the slow decorrelation of LPP (see, \cite[Theorem 2.1]{Corwin-Ferrari-Peche12}), we know that there exists some constant $N'''_1$ which depends on $C, \epsilon, \delta, \delta'$ and $\delta''$ such that
\begin{equation}
\label{eq:slow_decorrelation_pointwise}
\Prob\left(\max_{-M\le|j|\cdot\min\{\delta',\delta''\}\le M}\lvert H_{N,\pm}(t_j)-H_N(t_j)\rvert\ge \frac{\delta}{6}\right)< \frac{\epsilon}{6}
\end{equation}
for all $N\ge N'''_1$.

Note that for all $s\in[-M,M]$, there exists some $j$ such that $|t_j-s|\le \min\{\delta',\delta''\}$, and that
\begin{equation}
\lvert H_{N,\pm}(s)-H_N(s)\rvert\le \lvert H_{N,\pm}(t_j)-H_N(t_j)\rvert+\lvert H_{N,\pm}(s)-H_{N,\pm}(t_j)\rvert+\lvert H_{N}(s)-H_N(t_j)\rvert.
\end{equation}
Together with~\eqref{eq:tight_H_N},~\eqref{eq:tight_H_pm_N} and~\eqref{eq:slow_decorrelation_pointwise} we obtain Claim~\ref{claim:comparison_H_pm_and_H}.

Now we prove Theorem~\ref{thm:uniform_slow_decorr}. Note that for $s\in [-M,M]$ such that $s\cnought N^{2/3}\in\intZ$ we have
 \begin{equation}
 \begin{split}
 &\G\left(N+ 2C'_NN^\alpha+s\cnought N^{2/3}, N+ 2C'_NN^\alpha-s\cnought N^{2/3}\right)\\
 &-\G\left(N+l_N\left(s\right)N^\alpha+s\cnought N^{2/3}, N+l_N\left(s\right)N^\alpha-s\cnought N^{2/3}\right)\\
 &\geq G_{\left(N+ 2C'_NN^\alpha+s\cnought N^{2/3}, N+ 2C'_NN^\alpha-s\cnought N^{2/3}\right)}\left(N+l_N\left(s\right)N^\alpha+s\cnought N^{2/3}, N+l_N\left(s\right)N^\alpha-s\cnought N^{2/3}\right)
 \end{split}
 \end{equation}
 which has the same distribution as $\G\left(\left(2C'_N-l_N(s)\right)N^\alpha,\left(2C'_N-l_N(s)\right)N^\alpha\right)$. If $\alpha>1/3$, by applying Proposition~\ref{prop:previous_pt_to_pt}\ref{enu:prop:previous_pt_to_pt_2} we obtain the following estimate
 \begin{equation}
 \Prob\left(H_{N,+}(s)-\tilde H_N(s)\le-\frac{\delta}{2}\right)\le  e^{-c'N^{1-\alpha}}
 \end{equation}
 for all $N\ge N'_2$,
 where $c'$ and $N'_2$ are positive parameters independent of $s$. If $\alpha\le 1/3$, we have
 \begin{equation}
 \label{eq:estimate_difference_H_pm_and_H}
  \Prob\left(H_{N,+}(s)-\tilde H_N(s)\le-\frac{\delta}{2}\right)\le  \Prob\left(H_{N,+}(s)-\tilde H_N(s)\le-\frac{\delta}{2}N^{(3\alpha-2)/6}\right).
 \end{equation}
By applying Proposition~\ref{prop:previous_pt_to_pt}\ref{enu:prop:previous_pt_to_pt_2} again, we obtain
  \begin{equation}
 \Prob\left(H_{N,+}(s)-\tilde H_N(s)\le-\frac{\delta}{2}N^{(3\alpha-2)/6}\right) \le e^{-c''N^{\alpha/2}},
 \end{equation}
 for all $N\ge N''_2$, where $c''$ and $N''_2$  are positive  parameters independent of $s$. Therefore we still have the estimate~\eqref{eq:estimate_difference_H_pm_and_H} with $c'$ and $N'_2$ replaced by $c''$ and $N''_2$. By combining the above two cases we have
 \begin{equation}
 \Prob\left(\max_{s\in [-M,M], s\cnought N^{2/3}\in\intZ}\left(\tilde H_N(s)-H_{N,+}(s)\right)\ge\frac{\delta}{2}\right)\le \sum_{s\in [-M,M], s\cnought N^{2/3}\in\intZ} e^{-c'''N^{\min\{1-\alpha,\alpha/2\}}}
 \end{equation}
for all $N\ge N'''_2=\max\{N'_2,N''_2\}$, where $c'''=\min\{c',c''\}$. Note that the above estimate includes all the lattice points on the path $\{(N+s\cnought N^{2/3},N-s\cnought N^{2/3})\mid s\in[-M,M]\}$. Similarly one can obtain an analogous estimate including all the lattice points on the path $\{(l_N(s)N^\alpha+s\cnought N^{2/3},l_N(s)N^\alpha-s\cnought N^{2/3})\mid s\in[-M,M]\}$.  Moreover, the right hand side of the estimate tends to zero as $N\to\infty$ since there are only $o(N)$ terms in the summation. As a result, there exists an integer $N_2$ which depends on $M, C,\epsilon$, and $\delta$ such that
\begin{equation}
 \Prob\left(\max_{s\in [-M,M],\atop\mbox{lattice points}}\left(\tilde H_N(s)-H_{N,+}(s)\right)\ge\frac{\delta}{2}\right)<\frac{\epsilon}{2}
\end{equation}
for all $N\ge N_2$, where the maximum is taken over all the $s\in[-M,M]$ such that $(s\cnought N^{2/3},-s\cnought N^{2/3})$ or $(l_N(s)N^\alpha+ s\cnought N^{2/3},l_N(s)N^\alpha- s\cnought N^{2/3})$ is a lattice point. One can remove this restriction by
 using the definition of $H_N$ and $H_{N,+}$, and replacing the value of $\G$ at an arbitrary point by the interpolation of that on two nearby lattice points. Therefore there exists an integer $N_2$ which depends on $M, C,\epsilon$ such that
\begin{equation}
 \Prob\left(\max_{s\in [-M,M]}\left(\tilde H_N(s)-H_{N,+}(s)\right)\ge\frac{\delta}{2}\right)<\frac{\epsilon}{2}
\end{equation}
for all $N\ge N_2$.

By combing this estimate and Claim \ref{claim:comparison_H_pm_and_H}, we immediately have
\begin{equation}
\begin{split}
&\Prob\left(\max_{s\in[-M,M]}\left(\tilde H_N(s)-H_N(s)\right)\ge \delta\right)\\
&\le\Prob\left(\max_{s\in [-M,M]}\left(\tilde H_N(s)-H_{N,+}(s)\right)\ge\frac{\delta}{2}\right)+\Prob\left(\max_{s\in [-M,M]}\left(H_{N,+}(s)-H_N(s)\right)\ge\frac{\delta}{2}\right)\\
&<\epsilon
\end{split}
\end{equation}
for all $N\ge\max\{N_1,N_2\}$.

Similarly, there exists an integer $N_3$ which depends on $M, C,\epsilon$ and $\delta$ such that
\begin{equation}
 \Prob\left(\max_{s\in [-M,M]}\left(H_{N,-}(s)-\tilde H_N(s)\right)>\frac{\delta}{2}\right)<\frac{\epsilon}{2}
\end{equation}
for all $N\ge N_3$. By combing this estimate and Claim \ref{claim:comparison_H_pm_and_H}, we have
\begin{equation}
\begin{split}
&\Prob\left(\max_{s\in[-M,M]}\left( H_N(s)-\tilde H_N(s)\right)\ge \delta\right)\\
&\le\Prob\left(\max_{s\in [-M,M]}\left(H_N(s)-H_{N,-}(s)\right)\ge\frac{\delta}{2}\right)+\Prob\left(\max_{s\in [-M,M]}\left(H_{N,-}(s)-\tilde H_N(s)\right)\ge\frac{\delta}{2}\right)\\
&<\epsilon
\end{split}
\end{equation}
for all $N\ge\max\{N_1,N_2\}$. Theorem~\ref{thm:uniform_slow_decorr} follows immediately by taking $N_0=\max\{N_1,N_2,N_3\}$.

\section{Gibbs property of multi-layer discrete PNG and proof of Lemma \ref{lem:PNG_max}} \label{sec:Gibbs}

The goal of this section is to prove Lemma \ref{lem:PNG_max}. The proof relies on the correspondence between the LPP model and the multi-layer discrete polynuclear growth (PNG) model. The essential ingredient of the proof is the Gibbs property of the multi-layer discrete PNG model, analogous to the Gibbs property of the nonintersecting Brownian motions studied in \cite{Corwin-Hammond11}. We describe the multi-layer discrete PNG model and its relation to LPP, following closely to the presentation in \cite{Johansson03}, to facilitate the proof. Then we prove Lemma \ref{lem:PNG_max} based on technical results in Lemmas \ref{lem:monotonicity_of_random_walk} and \ref{lem:middle_pt}. The strategy of our proof is similar to that of \cite[Lemma 5.1]{Corwin-Hammond11}.


Let $I$ be an interval and $h(t)$ a function defined on $I$ that satisfies
\begin{equation}
  h(t) = h([t]) \in \intZ, \quad \text{and} \quad h(2m) \geq h(2m - \epsilon),\ h(2m + 1) \leq h(2m + 1 - \epsilon), \quad \text{for $m \in \intZ \cap I$},
\end{equation}
then we say that $h(t)$ is a PNG trajectory line on $I$.
If two PNG trajectory lines $h(t)$ and $g(t)$ on the same interval $I$ satisfy
\begin{equation}
  \limsup_{t \to t_0} g(t_0) < \liminf_{t \to t_0} h(t_0) \quad \text{for all $t_0 \in I$},
\end{equation}
we say that $h(\cdot)>g(\cdot)$ on $I$.

Fix a parameter $N\in \intZ_+$ and a constant $\epsilon\in (0,1)$. The multi-layer discrete PNG model is defined by an ensemble of infinitely many strictly ordered  PNG  trajectory lines $h_0,h_1,\cdots$ on $[-2N,2N]$. We say $h_0,h_1,\ldots$ form an $N$-permissible configuration if they satisfy the initial and terminal conditions
\begin{equation} \label{eq:init_term_conditions_PNG}
  h_i(-2N + 1 - \epsilon) = -i, \quad h_i(2N - 1 + \epsilon) = -i, \quad i = 0, 1, 2, \dotsc,
\end{equation}
and also satisfy the inequalities $h_i(\cdot)>h_{i+1}(\cdot)$ on $[-2N,2N]$ for all $i=0,1,\ldots$. One example of such a configuration is given in Figure \ref{fig:PNG}. Note that necessarily, for $i\geq N$, $h_i(\cdot)\equiv -i$ on $[-2N,2N]$.

\begin{figure}[htb]
  \centering
  \includegraphics{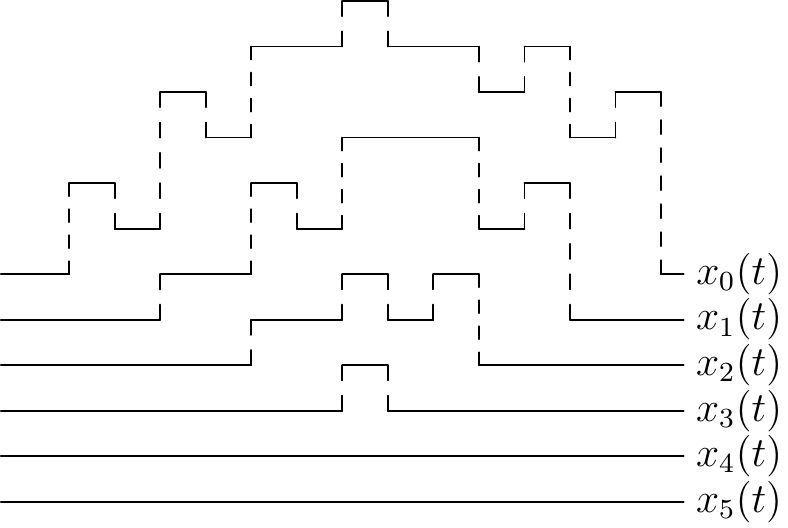}
  \caption{An example of multi-layer discrete PNG with $N = 4$.}
  \label{fig:PNG}
\end{figure}

 We define a weight $w$ for a PNG trajectory line $h$ on an interval $I = [a, b]$ as
\begin{equation} \label{eq:weight_PNG_trajectory}
  w(h) = \prod^{[b]}_{k = [a] + 1} p(\lvert h(k) - h(k - \epsilon) \rvert) \quad \text{where} \quad p(k) = \sqrt{1 - q} (\sqrt{q})^k,
\end{equation}
and define the weight for an $N$-permissible configuration of PNG trajectory lines $(h_0, h_1, \dotsc)$
\begin{equation} \label{eq:weight_of_2N_tuple}
  w(h_0, h_1, \dotsc) = \prod^{N - 1}_{k = 0} w(h_i),
\end{equation}
where $I = [-2N + 1 - \epsilon, 2N - 1 + \epsilon]$ in the formula of $w(h_i)$.  This product is restricted to $k\leq N-1$ since all other lines are constant as observed earlier.  The normalization $\sqrt{1 - q}$ is chosen such that
\begin{equation}
  \sum_{\text{all $N$-permissible configurations}} w(h_0, h_1, \dotsc) = 1.
\end{equation}
That this is the case can be shown from  \cite[Claim 3.10 and Proposition 3.11]{Johansson03}. This implies that the weight \eqref{eq:weight_of_2N_tuple} defines a probability on the set of all $N$-permissible configurations.

Furthermore, \cite[Proposition 3.11]{Johansson03} implies that for any $N$, the joint distribution of $\G(N + k, N - k)$ for $k = -N, -N + 1, \dotsc, N$, as defined in \eqref{eq:defn_inverse} is the same as the joint distribution of $h_0(2k)$ for $k = -N, -N + 1, \dotsc, N$, if $(h_0(t), h_1(t) \dotsc)$ is a random $N$-permissible configuration with probability given in \eqref{eq:weight_of_2N_tuple}. Then the point-to-curve LPP in Lemma \ref{lem:PNG_max} is expressed as \cite[Proposition 3.11]{Johansson03}
\begin{multline} \label{LPP_PNG_correspondence}
  \max_{K_1 \leq s \leq K_2 - c(K_2 - K_1)} G(N + l^0(s) + s, N + l^0(s) - s) = \max_{K_1 \leq k \leq K_2 - c(K_2 - K_1)} G(N + k, N - k) \\
  \stackrel{\mathrm{d}}{=} \max_{K_1 \leq k \leq K_2 - c(K_2 - K_1)} h_0(2k) = \max_{t \in [K_1, K_2 - c(K_2 - K_1)]} h_0(t).
\end{multline}
The proof of Lemma \ref{lem:PNG_max} relies on the Gibbs property of the probability space of permissible $2N$-tuples, in particular the Gibbs property as follows.
\begin{lem} \label{lem:Gibbs_random_walk}
   Consider $t_1 < t_2$, with $t_1,t_2\in (-2N + 1 - \epsilon, 2N - 1 + \epsilon)$ and consider $\tilde{h}(\cdot) = \big(\tilde{h}_0(\cdot),\tilde{h}_1(\cdot),\ldots\big)$ distributed according to the multi-layer discrete PNG model. Then the law of $\tilde{h}_0$ restricted to the interval $[t_1,t_2]$ is distributed according to the PNG trajectory of a single line $h(\cdot)$ on the interval $[t_1,t_2]$ conditioned on $h(t_1)=\tilde{h}_0(t_1)$, $h(t_2)=\tilde{h}_0(t_2)$, and $h(\cdot)>\tilde{h}_1(\cdot)$ on the entire interval.
\end{lem}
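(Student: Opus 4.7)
The plan is a direct verification of the spatial Markov property from the explicit product form of the multi-layer discrete PNG measure given by \eqref{eq:weight_of_2N_tuple}. First I would observe that the single-line weight defined in \eqref{eq:weight_PNG_trajectory} enjoys a deterministic splitting: for any $t_1<t_2$ in $(-2N+1-\epsilon,2N-1+\epsilon)$, one has the factorization
\begin{equation*}
w(\tilde h_0)=w\bigl(\tilde h_0|_{[-2N+1-\epsilon,\,t_1]}\bigr)\cdot w\bigl(\tilde h_0|_{[t_1,t_2]}\bigr)\cdot w\bigl(\tilde h_0|_{[t_2,\,2N-1+\epsilon]}\bigr),
\end{equation*}
because the product $\prod_{k=[a]+1}^{[b]} p(|h(k)-h(k-\epsilon)|)$ partitions naturally at the points $t_1,t_2$ (every jump occurs at an integer, each integer lies in a unique sub-interval, and the single factor straddling $t_j$, if any, depends only on $\tilde h_0([t_j])=\tilde h_0(t_j)$ together with $\tilde h_0(k)$ for $k$ in the relevant sub-interval). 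Similarly, the constraint $\tilde h_0(\cdot)>\tilde h_1(\cdot)$ decomposes into three indicators, one for each sub-interval.

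Next I would write the full joint law as
\begin{equation*}
\mathbb{P}(\tilde h_0,\tilde h_1,\dots)\propto\Bigl(\prod_{i=0}^{N-1}w(\tilde h_i)\Bigr)\cdot\mathbf 1_{\{\tilde h_i>\tilde h_{i+1}\ \forall i\}}\cdot\mathbf 1_{\{\text{initial/terminal conditions}\}},
\end{equation*}
and then condition on the sigma-algebra generated by $\tilde h_0(t_1),\tilde h_0(t_2)$ and the entire trajectory $\tilde h_1$. Using the factorization above together with the observation that the constraints $\tilde h_i>\tilde h_{i+1}$ for $i\ge 1$, the weights $w(\tilde h_i)$ for $i\ge 1$, the two ``outside'' weight factors $w(\tilde h_0|_{[-2N+1-\epsilon,t_1]})$, $w(\tilde h_0|_{[t_2,2N-1+\epsilon]})$, and the ``outside'' ordering $\tilde h_0>\tilde h_1$ on $[-2N+1-\epsilon,t_1]\cup[t_2,2N-1+\epsilon]$ do not involve the values of $\tilde h_0$ in $(t_1,t_2)$ (once the endpoint values are fixed), all of these terms can be absorbed into the normalizing constant when one marginalizes over $\tilde h_2,\tilde h_3,\dots$ and over $\tilde h_0$ outside $[t_1,t_2]$. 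What survives is precisely
\begin{equation*}
\mathbb{P}\bigl(\tilde h_0|_{[t_1,t_2]}=a\,\big|\,\tilde h_0(t_1),\tilde h_0(t_2),\tilde h_1\bigr)\propto w(a)\cdot\mathbf 1_{\{a(t_1)=\tilde h_0(t_1),\,a(t_2)=\tilde h_0(t_2)\}}\cdot\mathbf 1_{\{a(\cdot)>\tilde h_1(\cdot)\text{ on }[t_1,t_2]\}},
\end{equation*}
which is the stated conditional law.

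I do not expect a serious obstacle: the argument is essentially the standard derivation of a Gibbs/Markov property from a product-form joint density. The only bookkeeping care is in handling whether $t_1,t_2$ are integers and in correctly accounting for the boundary weight factors that involve $\tilde h_0(t_1)$ and $\tilde h_0(t_2)$; these are fixed by the conditioning and therefore contribute a constant (in $a$) factor that is absorbed into the normalization. Once this factorization is written down clearly, the identification with ``a single PNG trajectory weighted by $w$, conditioned on the endpoints and on lying strictly above $\tilde h_1$'' is immediate, exactly paralleling the analogous derivation for nonintersecting Brownian bridges in \cite{Corwin-Hammond11}.
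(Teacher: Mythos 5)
Your proof is correct, and it takes the same approach the paper implicitly relies on: the paper's proof is a one-sentence remark that the lemma is "a direct consequence" of the product form of the weight \eqref{eq:weight_PNG_trajectory}--\eqref{eq:weight_of_2N_tuple}, and your argument simply spells out that consequence explicitly (factorizing $w(\tilde h_0)$ at $t_1,t_2$, conditioning on $\tilde h_0(t_1),\tilde h_0(t_2),\tilde h_1$, and absorbing the outside factors and the other lines into the normalization). The bookkeeping caveat you flag about whether $t_1,t_2$ are integers is handled correctly, since $h(t)=h([t])$ ensures the straddling factor depends only on the fixed endpoint value together with one adjacent site.
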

\begin{proof}
  This lemma is a direct consequence of the formulas \eqref{eq:weight_PNG_trajectory} and \eqref{eq:weight_of_2N_tuple} that define the probability distribution of PNG trajectory lines and $N$-permissible configurations.
\end{proof}
We need two more lemmas. The first is a monotone coupling result:
\begin{lem} \label{lem:monotonicity_of_random_walk}
  Let $t_1 < t_2 < t_3 \in \realR$, $a_1, a_2, a_3 \in \intZ$ and $\tilde{h}(t)$ be a fixed PNG trajectory line on $[t_1, t_3]$ such that $\tilde{h}(t_1) < a_1$, $\tilde{h}(t_3) < a_3$. Suppose $h(t)$ is a random variable in the space of PNG trajectory lines $H := \big\{ h(t) \text{ on }[t_1, t_3] \mid h(t_1) = a_1,\ h(t_3) = a_3, \text{ and } h(\cdot)>\tilde{h}(\cdot) \big\}$ where the probability is given by the weight $w(h)$ as in \eqref{eq:weight_PNG_trajectory} up to a normalization constant, and suppose $g(t)$ is a random variable in the space of PNG trajectory lines $G := \{ g(t) \text{ on $[t_1, t_3]$} \mid g(t_1) = a_1,\ g(t_3) = a_3 \}$ where the probability is also given by the weight $w(g)$ as in \eqref{eq:weight_PNG_trajectory} up to a normalization constant. Then it follows that

  \begin{equation} \label{eq:monotonicity_lem}
    \Prob(h(t_2) \geq a_2) \geq \Prob(g(t_2) \geq a_2).
  \end{equation}
\end{lem}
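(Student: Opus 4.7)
The plan is to prove \eqref{eq:monotonicity_lem} via the FKG inequality on the lattice of PNG trajectory lines with endpoints fixed at $a_1$ and $a_3$. Let $L$ denote the set of PNG trajectory lines on $[t_1,t_3]$ with $h(t_1)=a_1$ and $h(t_3)=a_3$, equipped with the pointwise partial order. Then $L = G$ and $H = \{h \in L : h > \tilde{h}\}$, and the measures $\mu_H$ and $\mu_G$ (both proportional to $w$) satisfy $\mu_H = \mu_G(\,\cdot\, \mid h > \tilde{h})$. Both the floor event $\{h > \tilde{h}\}$ and the target event $\{h(t_2) \geq a_2\}$ are increasing in the pointwise order, so once the FKG lattice condition is established for $\mu_G$,
\[
\Prob(h(t_2) \geq a_2) \;=\; \mu_G\bigl(h(t_2) \geq a_2 \,\big|\, h > \tilde{h}\bigr) \;\geq\; \mu_G\bigl(h(t_2) \geq a_2\bigr) \;=\; \Prob(g(t_2) \geq a_2).
\]

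The verification of the FKG hypotheses is straightforward. For closure of $L$ under pointwise meet and join: given $h,h' \in L$ and an even integer $2m \in [t_1,t_3]$, the PNG condition $h(2m) \geq h(2m - \epsilon)$ (and likewise for $h'$) immediately implies $(h \vee h')(2m) \geq (h \vee h')(2m - \epsilon)$ and $(h \wedge h')(2m) \geq (h \wedge h')(2m - \epsilon)$; the corresponding inequalities at odd integers and the endpoint constraints hold automatically. For log-modularity of the weight: since at each $k$ the sign of $h(k)-h(k-\epsilon)$ is determined by the parity of $k$, the identity $\max(a,b) - \max(c,d) + \min(a,b) - \min(c,d) = (a-c) + (b-d)$ (valid when $a \geq c$ and $b \geq d$) yields
\[
|(h \vee h')(k) - (h \vee h')(k - \epsilon)| + |(h \wedge h')(k) - (h \wedge h')(k - \epsilon)| = |h(k) - h(k - \epsilon)| + |h'(k) - h'(k - \epsilon)|
\]
for every $k$, and combined with $p(n) = \sqrt{1-q}(\sqrt{q})^n$ this gives $w(h \vee h')\, w(h \wedge h') = w(h)\, w(h')$, so the FKG lattice condition holds with equality.

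The main obstacle is that $L$ is infinite since individual jumps are unbounded, so the classical finite-lattice FKG inequality does not apply verbatim. I will handle this by truncation: restrict to the finite sublattice $L_M := \{h \in L : \max_k |h(k)| \leq M\}$, apply the finite FKG inequality to the renormalized measure $\mu_G(\,\cdot\, \mid L_M)$ (which remains proportional to $w$ and inherits the log-modularity verified above), and pass to the limit $M \to \infty$ using $\mu_G(L \setminus L_M) \to 0$. The latter follows from the geometric decay of $w$ in the jump magnitudes (the $(\sqrt{q})^n$ factors), which makes large excursions exponentially unlikely, and then dominated convergence transfers the inequality from each $L_M$ to $\mu_G$ itself, completing the proof of \eqref{eq:monotonicity_lem}.
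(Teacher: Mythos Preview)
Your proof is correct and takes a genuinely different route from the paper's. The paper proves the lemma by a monotone Markov chain coupling in the style of \cite{Corwin-Hammond11}: it introduces Glauber-type resampling dynamics on the state spaces $H$ and $G$, couples them via common clocks and common geometric proposals so that the $H$-chain dominates the $G$-chain pathwise at all times, and then identifies the desired measures as the (unique) invariant distributions. Your argument instead exploits the algebraic structure of the weight: the identity $w(h\vee h')\,w(h\wedge h')=w(h)\,w(h')$ verifies the FKG lattice condition exactly, and since both $\{h>\tilde h\}$ and $\{h(t_2)\ge a_2\}$ are increasing in the pointwise order, the conditional probability inequality is an immediate consequence of FKG. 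Your truncation to $L_M$ to reduce to a finite distributive lattice is clean, and $L_M$ is indeed a sublattice since pointwise max and min preserve the bound $|h(k)|\le M$.

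Each approach has its merits. Your FKG argument is more elementary and self-contained: it requires no dynamics, no verification of irreducibility or convergence to stationarity, and uses only the multiplicative structure of $p(n)=\sqrt{1-q}\,(\sqrt{q})^n$. The paper's coupling, on the other hand, constructs an explicit pathwise domination $h_\tau(t)\ge g_\tau(t)$, which is a strictly stronger statement than the distributional inequality \eqref{eq:monotonicity_lem} and is the natural tool if one later needs to compare entire trajectories or multiple curves simultaneously (as in the multi-line Gibbs arguments of \cite{Corwin-Hammond11}).
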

\begin{proof}[Sketch of proof]
  In the proof of \cite[Lemma 2.6]{Corwin-Hammond11}, the result of this lemma is shown to hold if the PNG trajectory line is replaced by the trajectory of a standard random walk. The same method, namely the coupling of Monte-Carlo Markov chains, works in our situation.

  We consider a continuous-time Markov chain dynamic on the countable sets $H$ and $G$. Without loss of generality, we assume that $t_1$ and $t_3$ are even integers. To distinguish the time variable of the Markov chain dynamic and the variables of $h(t)$ and $g(t)$, we denote the Markov time as $\tau$, and write the random PNG trajectory lines as $h_{\tau}(t)$ and $g_{\tau}(t)$ respectively. The time $0$ configuration of $h_0(t)$ is chosen arbitrarily in $H$ and we let $g_0(t) = h_0(t)$. The dynamics of the Markov chain are as follows. For each integer $t_0 \in \{ t_1 + 1, t_1 + 2, \dotsc, t_3 - 1 \}$, there is an independent exponential clock which rings at rate $1$. For each $\tau > 0$, let $r(\tau)$ be \iid\ random variables with geometric distribution such that $\Prob(r(\tau) = k) = (1 - q) q^k$ for $k = 0, 1, 2, \dotsc$. When the clock labeled by $t_0$ rings, the random PNG trajectory line $h_{\tau}(t)$ remains the same for $t \notin [t_0, t_0 + 1)$, and changes the value on $[t_0, t_0 + 1)$ into (1) $\max(h_{\tau}(t_0 - 1), h_{\tau}(t_0 + 1) + r(\tau)$ if $t_0$ is even, or (2) $\min(h_{\tau}(t_0 - 1), h_{\tau}(t_0 + 1)) - r(\tau)$ if $t_0$ is odd. Likewise, according to the same clock, the random PNG trajectory line $g_{\tau}(t)$ remains the same for $t \notin [t_0, t_0 + 1)$ and changes the value on $[t_0, t_0 + 1)$ into (1) $\max(g_{\tau}(t_0 - 1), g_{\tau}(t_0 + 1) + r(\tau)$ if $t_0$ is even, or (2a) $\min(g_{\tau}(t_0 - 1), g_{\tau}(t_0 + 1)) - r(\tau)$ if $t_0$ is odd and $\min(g_{\tau}(t_0 - 1), g_{\tau}(t_0 + 1)) - r(\tau) > \max(\tilde{h}(t_0 - 1), \tilde{h}(t_0 + 1))$, or (2b) remains the same otherwise.

  Then we observe that for any $\tau > 0$, $h_{\tau}(t) \geq g_{\tau}(t)$ for all $t \in [t_1, t_3]$. Another fact is that the marginal distributions of these time dynamics converge to the invariant measures for this Markov chain, which are given by the weight function \eqref{eq:weight_PNG_trajectory} on the state spaces $G$ and $H$ respectively. This can be confirmed by checking that the multi-layer PNG model measure is the unique invariant measure under these irreducible, aperiodic  Markov dynamics.
\end{proof}

\begin{lem} \label{lem:middle_pt}
  Let $t_1 < t_2 < t_3 \in \realR$,  $a_1, a_3 \in \intZ$ and $a_2 \in \realR$ such that $(t_1, a_1), (t_2, a_2), (t_3, a_3)$ are collinear, \ie,
  \begin{equation}
    \frac{a_2 - a_1}{t_2 - t_1} = \frac{a_3 - a_2}{t_3 - t_2}.
  \end{equation}
  Let $g(t)$ be a random variable in the space of PNG trajectory lines with fixed ends $G := \{ g(t) \text{ on $[t_1, t_3]$} \mid g(t_1) = a_1,\ g(t_3) = a_3 \}$ where the probability is given by the weight $w(g)$ as in \eqref{eq:weight_PNG_trajectory} up to a normalization constant. Then
  \begin{equation}
    \Prob(g(t_2) \geq a_2) \geq \frac{1}{2} - \delta_{\min(t_2 - t_1, t_3 - t_2)},
  \end{equation}
  where for any $t > 0$, $\delta_t > 0$ is a decreasing function in $t$ and $\delta_t \to 0$ as $t \to \infty$.
\end{lem}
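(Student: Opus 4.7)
The plan is to identify $g$ as a discrete random walk bridge and analyze $g(t_2)$ via a central limit theorem. By the product formula in \eqref{eq:weight_PNG_trajectory}, the increments $Y_k := g(k)-g(k-1)$ for $k\in \intZ\cap(t_1,t_3]$ are, before conditioning on endpoints, mutually independent with $Y_k$ distributed as $+\mathrm{Geom}(1-q)$ when $k$ is even and $-\mathrm{Geom}(1-q)$ when $k$ is odd; the law of the bridge is recovered by conditioning on $\sum_k Y_k = a_3-a_1$. I split the increments into those in $(t_1,t_2]$ and those in $(t_2,t_3]$, and set $X := g(t_2)-a_1$ and $W := a_3-g(t_2)$. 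Then $X$ and $W$ are unconditionally independent, each a difference of two independent negative binomials whose numbers of trials $n^\pm_1,n^\pm_2$ count the even/odd integers in the respective subintervals.

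The second step is to compute the conditional law of $X$ given $X+W=a_3-a_1$. Exchangeability of the geometric summands separately within the up-jump and the down-jump families yields exact identities such as $\E[U^{(1)}\mid U^{(1)}+U^{(2)}]=(n^+_1/n^+)(U^{(1)}+U^{(2)})$; combined with a local CLT for negative binomials, the conditional distribution of $X$ is well-approximated by a Gaussian. To control matters uniformly as $a_3-a_1$ varies, I use that the conditional law of $X$ given $X+W$ is invariant under an exponential tilt of the increments (the Radon--Nikodym derivative is a function of $X+W$ only), and choose the tilt $\lambda$ so that the tilted marginal mean of $X+W$ equals $a_3-a_1$. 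Under this tilt the problem sits in the Cram\'er regime, and a standard LCLT for the tilted negative binomials, with constants depending only on $q$, gives
\begin{equation*}
\E\bigl[X\,\big|\,X+W=a_3-a_1\bigr] \,=\, (a_2-a_1) + O_q(1),\qquad \Var\bigl(X\,\big|\,X+W=a_3-a_1\bigr) \,\geq\, c_q\,\min(t_2-t_1,\,t_3-t_2),
\end{equation*}
where the $O_q(1)$ shift in the mean comes from the parity discrepancies $|n^+_j-n^-_j|\le 1$, from the difference between the real-interval proportion $(t_2-t_1)/(t_3-t_1)$ and its integer analogue $(n^+_1+n^-_1)/(n^++n^-)$, and from the rounding $\lceil a_2\rceil - a_2 \in [0,1)$.

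The final step is a direct CLT evaluation: $\Prob(g(t_2)\ge a_2) = \Prob(X\ge \lceil a_2-a_1\rceil)$ equals $\Phi\bigl((\E[X\mid\cdot]-(a_2-a_1))/\sqrt{\Var(X\mid\cdot)}\bigr)+o(1)$, and the argument of $\Phi$ is $O_q(1)/\sqrt{\min(t_2-t_1,t_3-t_2)}$, so it tends to $0$ uniformly as $t := \min(t_2-t_1,t_3-t_2)\to\infty$. Setting $\delta_t := \min\bigl(1/2,\,C_q/\sqrt{t}\bigr)$ produces a positive, non-increasing function with $\delta_t\to 0$ as $t\to\infty$, and the lemma follows. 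The main technical hurdle is controlling the conditional distribution uniformly in $a_1,a_3$: in particular, for $|a_3-a_1|$ large compared to $t_3-t_1$ the untilted problem is in a large-deviation regime where the naive CLT fails. The exponential-tilting reduction addresses this by always placing the problem in the Cram\'er regime of an appropriate tilted geometric measure, so that only uniform LCLT estimates on geometric random variables (with $q$ fixed in $(0,1)$) are needed.
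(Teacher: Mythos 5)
Your proposal is essentially the paper's argument, with the same decomposition and the same key idea. The paper also reduces to a discrete random-walk bridge (pairing consecutive up/down jumps gives i.i.d.\ two-sided geometric increments $X_k = g(2k)-g(2k-2)$), performs an exponential tilt so that the tilted walk has zero drift after subtracting the chord, notes that the tilted per-step variance is bounded below uniformly over the slope $a_3/t_3$, and then invokes a uniform invariance principle: the conditioned walk converges to a Brownian bridge, which at any interior time is symmetric about the chord, giving $\Prob(g(t_2)\geq a_2)\to 1/2$ uniformly as $\min(t_2-t_1,t_3-t_2)\to\infty$. You phrase the last step as a local CLT for the tilted increments producing a Gaussian approximation to the conditional law; the paper phrases it as weak convergence to a Brownian bridge. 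These are the same estimate, and your observation that the conditional law given $X+W$ is tilt-invariant is precisely the paper's change-of-measure step.

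Two small slips, neither of which affects the structure: from $p(k)=\sqrt{1-q}(\sqrt{q})^k$ in \eqref{eq:weight_PNG_trajectory} the unsigned increments are geometric with parameter $1-\sqrt q$, not $1-q$; and the exchangeability identity you cite, $\E[U^{(1)}\mid U^{(1)}+U^{(2)}]=(n^+_1/n^+)(U^{(1)}+U^{(2)})$, conditions on the up-jump total $U^{(1)}+U^{(2)}$, which is not fixed in the bridge---only the full endpoint $X+W$ is fixed. The exchangeability argument does go through at the level of the paired two-step increments $X_k$ (which are i.i.d.\ when $t_1,t_3$ are even), and since you in any case lean on the tilted LCLT to control the conditional mean and variance up to $O_q(1)$, this does not break your argument. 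Your quantitative choice $\delta_t = \min(1/2, C_q/\sqrt t)$ is a valid (in fact sharper than needed) instantiation of the decreasing function the lemma asks for.
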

\begin{proof}
  Without loss of generality, we assume that $t_1 = a_1 = 0$ and then $a_2 = (t_2/t_3)a_3$. We also assume in the proof that $t_1, t_2, t_3$ are even integers. Consider the \iid\ discrete random variables $X_1, X_2, \dotsc$ with support $\intZ$ and distribution
  \begin{equation}
    \Prob(X_1 = k) = \frac{1 - \sqrt{q}}{1 + \sqrt{q}} (\sqrt{q})^{\lvert k \rvert}, \quad k = 0, \pm 1, \pm 2, \dotsc,
  \end{equation}
  and define $S_n = \sum^n_{k = 1} X_k$. Then the distribution of $g(t_2)$ is the same as the distribution of $S_{t_2/2}$ under the condition that $S_{t_3/2} = a_3$. We take a change of measure, and define another sequence of \iid\ discrete random variables $X'_1, X'_2, \dotsc$ with support $\intZ - a_3/t_3$ and distribution
  \begin{equation}
    \Prob \left( X'_1 = k - \frac{a_3}{t_3} \right) = \frac{(1 - p\sqrt{q})(1 - \sqrt{q}/p)}{1 - q} \times
    \begin{cases}
      (p\sqrt{q})^k & \text{if $k \geq 0$}, \\
      (\sqrt{q}/p)^k & \text{if $k < 0$},
    \end{cases}
  \end{equation}
  where $p$ is the real number in $(\sqrt{q}, \sqrt{q}^{-1})$ that satisfies
  \begin{equation} \label{eq:equation_for_p_change_of_measure}
    \frac{(p^2 - 1)\sqrt{q}}{(1 - p\sqrt{q})(p - \sqrt{q})} = \frac{a_3}{t_3}.
  \end{equation}
  Then if we define $S'_n = \sum^n_{i = 1} X'_i$, the distribution of $g(t_2) - a_2$ is the distribution of $S'_{t_2/2}$ under the condition that $S'_{t_3/2} = 0$. Explicit computation shows that the mean of $X'_1$ is zero and the variance of $X'_1$ is bounded below by a positive constant independent of $a_3/t_3$. Thus the random walk with increment $X'_k$ conditioned with $S'_{t_3/2} = 0$ converges weakly to a Brownian motion as $t_3/2 \to \infty$, and the convergence is uniform in $a_3/t_3$. Since for a Brownian bridge from $0$ to $0$, at any time between the initial and the terminal times, the probability that the position of particle is positive equals $1/2$, we have that the probability that $g(t_2) - a_2$ is positive converges to $1/2$ as the total steps of the random walk $t_3/2 \to \infty$ and both $t_2/2 \to \infty$ and $(t_3 - t_2)/2 \to \infty$. Since the convergence of the conditioned random walk to a Brownian bridge is uniform in $a_3/t_3$, the convergence of $\Prob(g(t_2) - a_2)$ to $1/2$ is also uniform in $a_3/t_3$. We thus prove the lemma.
\end{proof}

Now we can prove Lemma \ref{lem:PNG_max}. By \eqref{LPP_PNG_correspondence}, the lemma is transformed into a property of multi-layer discrete PNG model with parameter $N$. We denote $K'_2 = K_2 - c(K_2 - K_1)$, and let $\big(h_0(\cdot), h_1(\cdot),\ldots\big)$ be a multi-layer PNG model distributed ensemble of lines with probability defined by \eqref{eq:weight_of_2N_tuple}. Then we have
\begin{equation} \label{eq:triple_sum_of_probab}
  \begin{split}
    & \Prob \left( \max_{K_1 \leq k \leq K'_2} h_0(2k) \geq M_1 \right) \\
    \leq {}& \Prob\big(h_0(2K_3) < M_3\big) + \Prob \left( \max_{K_1 \leq k \leq K'_2} h_0(2k) \geq M_1 \text{ and } h_0(2K_3) \geq M_3 \right) \\
    \leq {}& \Prob\big(h_0(2K_3) < M_3\big) + \sum_{K_1 \leq K \leq K'_2} \sum^{\infty}_{M'_1 = M_1} \sum^{\infty}_{M'_3 = M_3} \Prob \left(
      \begin{gathered}
        \max_{K_1 \leq k < K} h_0(2k) < M_1, \\
        h_0(2K) = M'_1 \text{ and } h_0(2K_3) = M'_3
      \end{gathered}
      \right).
  \end{split}
\end{equation}

By Lemmas \ref{lem:Gibbs_random_walk} and \ref{lem:monotonicity_of_random_walk}, we have the inequality for the conditional probability
\begin{equation} \label{eq:conditional_prob_ineq}
   \Prob \left( h_0(2K_2) \geq M_2 \left\lvert
       \begin{gathered}
         \max_{K_1 \leq k < K} h_0(2k) < M_1, \\
         h_0(2K) = M'_1\text{ and } h_0(2K_3) = M'_3
       \end{gathered}
     \right. \right) \leq \Prob(g(2K_2) \geq M_2),
\end{equation}
where $g(t)$ is a random variable in the space of PNG trajectory lines with fixed ends $G := \{ g(t) \text{ on $[2K, 2K_3]$} \mid g(2K) = M'_1,\ g(2K_3) = M'_3 \}$ and the probability is given by the weight $w(g)$ as in \eqref{eq:weight_PNG_trajectory} up to a normalization constant.

Denote
\begin{equation}
  M'_2 = \frac{K_3 - K_2}{K_3 - K} M'_1 + \frac{K_2 - K}{K_3 - K} M'_3,
\end{equation}
such that $(K, M'_1), (K_2, M'_2), (K_3, M'_3)$ are collinear. It is clear that $M'_2 \geq M_2$, and then by Lemma \ref{lem:middle_pt}
\begin{equation} \label{eq:random_walk}
  \Prob \big(g(2K_2) \geq M_2\big) \geq \Prob \big(g(2K_2) \geq M'_2\big) > \frac{1}{2} - \delta_{\min(K_2 - K, K_3 - K_2)} > \frac{1}{2} - \delta_{\min(c(K_2 - K_1), K_3 - K_2)}.
\end{equation}
where $\delta_t$ is the same as in Lemma \ref{lem:middle_pt}.

Thus by \eqref{eq:conditional_prob_ineq} and \eqref{eq:random_walk},
\begin{multline}
  \Prob \left(
      \begin{gathered}
        \max_{K_1 \leq k < K} h_0(2k) < M_1, \\
        h_0(2K) = M'_1 \text{ and } h_0(2K_3) = M'_3
      \end{gathered}
      \right) < \\
      \frac{1}{\frac{1}{2} - \delta_{\min(c(K_2 - K_1), K_3 - K_2)}} \Prob \left(
      \begin{gathered}
        h_0(2K_2) \geq M_2,\ \max_{K_1 \leq k < K} h_0(2k) < M_1, \\
        h_0(2K) = M'_1 \text{ and } h_0(2K_3) = M'_3
      \end{gathered}
      \right),
\end{multline}
and then
\begin{equation} \label{eq:final_estimate}
  \begin{split}
    & \sum_{K_1 \leq K \leq K'_2} \sum^{\infty}_{M'_1 = M_1} \sum^{\infty}_{M'_3 = M_3} \Prob \left(
      \begin{gathered}
        \max_{K_1 \leq k < K} h_0(2k) < M_1, \\
        h_0(2K) = M'_1 \text{ and } h_0(2K_3) = M'_3
      \end{gathered}
    \right) \\
    < {}& \frac{1}{\frac{1}{2} - \delta_{\min(c(K_2 - K_1), K_3 - K_2)}} \sum_{K_1 \leq K \leq K'_2} \sum^{\infty}_{M'_1 = M_1} \sum^{\infty}_{M'_3 = M_3} \Prob \left(
      \begin{gathered}
        h_0(2K_2) \geq M_2,\ \max_{K_1 \leq k < K} h_0(2k) < M_1, \\
        h_0(2K) = M'_1 \text{ and } h_0(2K_3) = M'_3
      \end{gathered}
    \right) \\
    \leq {}& \frac{1}{\frac{1}{2} - \delta_{\min(c(K_2 - K_1), K_3 - K_2)}} \Prob(h_0(2K_2) \geq M_2).
  \end{split}
\end{equation}
Substitute \eqref{eq:final_estimate} into \eqref{eq:triple_sum_of_probab} and use the correspondence \eqref{LPP_PNG_correspondence}, we obtain the proof of Lemma \ref{lem:PNG_max} with the $\epsilon_t$ there determined by $2 + \epsilon_t = (\frac{1}{2} - \delta_t)^{-1}$ where $\delta_t$ is that in Lemma \ref{lem:middle_pt}.

\appendix

\section{Proof of Lemma \ref{lem:weaker_technical}} \label{sec:appendix}

In this appendix we prove the following estimate of $G([\gamma N],N)$:
\begin{lem}
For any fixed $\gamma_0>1$, there exist some constant $L>0$ and $\delta>0$ such that
\begin{equation}
\label{eq:lemma_intermediate_estimate_G}
\Prob\left(G([\gamma N], N)\ge a_0(\gamma) N+sb_0(\gamma) N^{1/3}\right)\le e^{-cs^{3/2}},
\end{equation}
for large $N$ and all $\gamma\in[\gamma_0^{-1},\gamma_0]$, $s\in[L,\delta N^{2/3}]$. Here $a_0(\gamma)$ and $b_0(\gamma)$ are defined in~\eqref{eq:LLN_result} and~\eqref{eq:fluctuation_constant_def}, $c>0$ is a constant which only depends on $\gamma_0, L$ and $\delta$.
\end{lem}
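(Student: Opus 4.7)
My plan is to use the exact determinantal description of the one-point distribution of $G([\gamma N], N)$ due to Johansson \cite{Johansson00}, combined with a uniform steepest-descent analysis of the associated kernel. Johansson's formula gives
$$\Prob\bigl(G([\gamma N], N) \leq u\bigr) = \det(I - K_{\gamma, N})_{\ell^2(\{u+1, u+2, \ldots\})},$$
where $K_{\gamma, N}$ is the discrete Meixner kernel associated to the geometric weight with parameter $1-q$. From the elementary inequality $1 - \det(I - K) \leq \Tr K$ for $0 \leq K \leq I$ trace-class, the problem reduces to bounding
$$\sum_{x \geq u+1} K_{\gamma, N}(x, x) \leq e^{-c s^{3/2}}, \qquad u = \lfloor a_0(\gamma) N + s b_0(\gamma) N^{1/3} \rfloor.$$

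The heart of the argument is the pointwise diagonal estimate
$$K_{\gamma, N}(x, x) \leq C N^{-1/3} \exp\bigl(-c \, t^{3/2}\bigr) \quad \text{for } x = a_0(\gamma) N + t b_0(\gamma) N^{1/3}, \ t \in [L, \delta N^{2/3}],$$
to be obtained uniformly in $\gamma \in [\gamma_0^{-1}, \gamma_0]$. To prove this I would write $K_{\gamma, N}(x, x)$ as a double contour integral with an explicit phase $F_\gamma(\cdot; \kappa)$ of variable $\kappa = x/N$, whose double saddle at $\kappa = a_0(\gamma)$ encodes the LLN edge. Deforming both contours onto steepest-descent paths through the saddle $z_c(\gamma)$ and carefully tracking the cubic contribution at the saddle gives the claimed bound. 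Summing over $x \geq u+1$ converts the prefactor $N^{-1/3}$ into an integral in $t$ on $[s, \infty)$, which is still dominated by $e^{-c s^{3/2}}$. Uniformity in $\gamma$ is inherited from the smooth $\gamma$-dependence of $z_c(\gamma)$ and $F_\gamma$.

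The main obstacle is extending the local saddle-point estimate from the edge regime $t = O(1)$ to the full range $t \in [L, \delta N^{2/3}]$. As $t$ grows, the effective saddle relevant to the displaced target $x/N = a_0(\gamma) + O(t N^{-2/3})$ drifts off $z_c(\gamma)$, so the naive Airy-kernel cubic approximation breaks down once $t$ becomes a positive power of $N$. To handle this I would establish global monotonicity of $\Re F_\gamma$ along the deformed contours and bound the non-saddle contribution by a geometric tail, thereby extracting $e^{-c t^{3/2}}$ directly from the phase. The same contour estimates are designed to run uniformly in $\gamma$ over the compact range $[\gamma_0^{-1}, \gamma_0]$, using the analytic dependence of the saddle and of $F_\gamma$ on $\gamma$. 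Uniform upper tail estimates of this sharp cubic type for Meixner-like ensembles are by now standard (compare the lower tail treatment in \cite{Baik-Deift-McLaughlin-Miller-Zhou01}), so this step is a careful adaptation rather than a new technique; in particular, the weaker $e^{-cs}$ bound actually needed in Lemma \ref{lem:weaker_technical} follows immediately once the cubic exponent is replaced by any positive power.
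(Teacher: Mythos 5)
Your plan is correct but routes through a different determinantal apparatus than the paper. You invoke Johansson's Meixner-ensemble representation $\Prob(G \leq u) = \det(I - K)_{\ell^2\{u+1,\dots\}}$ together with the operator inequality $1 - \det(I-K) \leq \Tr K$, valid because the restricted Meixner kernel is a compression of a projection and so satisfies $0 \leq K \leq I$; this collapses the problem to a single diagonal sum $\sum_{x>u} K(x,x)$. The paper instead starts from the Baik--Rains Toeplitz representation, applies the Geronimo--Case--Borodin--Okounkov identity to obtain a Fredholm determinant with kernel $K_n = UV$ on $\ell^2\{n,n+1,\dots\}$, and then controls $1 - \det(1-K_n)$ by bounding every term of the Fredholm minor expansion (using pointwise exponential bounds on $K_n(i,j)$ and summing a geometric series). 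Your route is cleaner at the determinantal step since it requires no minor expansion or Hadamard estimates, but it leans on the positivity $0 \leq K \leq I$ of the Meixner kernel which the GCBO kernel $UV$ does not obviously enjoy. Both reduce to the same hard analytic core, which you correctly isolate: a steepest-descent estimate on the kernel, uniform over $\gamma$ in a compact interval and over $s$ up to $\delta N^{2/3}$, with the crucial feature that once $s$ grows as a power of $N$ one cannot simply use the local Airy scaling but must track the global phase along the deformed contour. This is exactly what the paper does in the appendix (explicit saddle $z_0$, cubic phase expansion, and the observation that the effective exponent caps at $\min\{s^{3/2}, c''N\}$, forcing the restriction $\delta^{3/2} \leq c''$). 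Your proposal identifies the right plan and the right obstacle, but the uniform steepest-descent bound is asserted rather than carried out, so the proof would still need that computation to be complete.
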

  \begin{proof}

    The following formula for the distribution of $G(M,N)$ was known \cite{Baik-Rains01b}
    \begin{equation}
    \label{eq:distribution_G_Toeplitz}
      \Prob\left(G(M,N)\le n\right)=(1-q)^{MN}D_n(\phi),
    \end{equation}
    where $\phi(z):=(1+\sqrt{q}z)^M(1+\frac{\sqrt{q}}{z})^N$, and $D_n(\phi)$ is the $n$-th Toeplitz determinant with symbol $\phi$:
    \begin{equation}
    D_n(\phi):=\det\left(\int_{|z|=1}z^{-j+k}\phi(z)\frac{dz}{2\pi iz}\right)_{j,k=0}^{n-1}.
    \end{equation}
   Note that one can take $n\to\infty$ in~\eqref{eq:distribution_G_Toeplitz} and obtain
   \begin{equation}
   D_\infty(\phi):=\lim_{n\to\infty}D_n(\phi)=(1-q)^{-MN}.
   \end{equation}

   Now we apply the Geronimo-Case-Borodin-Okounkov formula \cite{Case-Geronimo79, Borodin-Okounkov00} and obtain
    \begin{equation}
      \Prob\left(G(M,N)\le n\right)=D_\infty(\phi)^{-1}D_n(\phi)=\det(1-K_n),
    \end{equation}
    where $K_n$ is an operator on $l^2\{n,n+1,\cdots\}$ with kernel
    \begin{equation}
      K_n(i,j)=\sum_{k=1}^{\infty}U(i,k)V(k,j).
    \end{equation}
    Here
    \begin{equation}
    \begin{split}
      U(i,k):=\int_{|z|=1}\left(1-\frac{\sqrt{q}}{z}\right)^N\left(1-\sqrt{q}z\right)^{-M}z^{-i-k}\frac{dz}{2\pi iz},\\
      V(k,j):=\int_{|z|=1}\left(1-\frac{\sqrt{q}}{z}\right)^{-N}\left(1-\sqrt{q}z\right)^{M}z^{j+k}\frac{dz}{2\pi iz}.
      \end{split}
    \end{equation}

    Now we consider the asymptotics of $\det(1-K_n)$ when $M=[\gamma N]$, $n=a_0(\gamma)N+sb_0(\gamma)N^{1/3}$ and $N\to\infty$. Here $\gamma \in [\gamma_0^{-1},\gamma_0]$ and $s\in [L,\delta N^{2/3}]$ for some parameters $L>0$ and $\delta>0$.

    Let
    \begin{equation}
      z_0:=\frac{1+\sqrt{\gamma q}}{\sqrt{\gamma}+\sqrt{q}}.
    \end{equation}
Note that if we replace the kernels $U$ and $V$ by the following $\tilde U$ and $\tilde V$, the determinant $\det(1-K_n)$ does not change.
\begin{equation}
\begin{split}
\tilde U(i,k):= \left(1-\frac{\sqrt{q}}{z_0}\right)^{-N}\left(1-\sqrt{q}z_0\right)^Mz_0^{i+k}U(i,k),\\
\tilde V(k,j):=V(k,j)\left(1-\frac{\sqrt{q}}{z_0}\right)^N\left(1-\sqrt{q}z_0\right)^{-M}z_0^{-j-k}.
\end{split}
\end{equation}

    Write $i=a_0(\gamma)N+xb_0(\gamma)N^{1/3}$, $j=a_0(\gamma)N+yb_0(\gamma)N^{1/3}$ and $k=ub_0(\gamma)N^{1/3}$, where $x,y\ge s,$ and $u\ge 0$. Then we have
    \begin{equation}
      \tilde U(i,k)=e^{Nf(z_0)}\int_{|z|=1} e^{\left(-Nf(z)+N^{1/3}\phi(z)\right)}\frac{dz}{2\pi iz}
    \end{equation}
    where
    \begin{equation}
    f(z)=-\log(1-\frac{\sqrt{q}}{z})+\gamma\log(1-\sqrt{q}z)+a_0(\gamma)\log z,
    \end{equation}
    and $\phi(z)=-(x+u)b_0(\gamma)\log (z/z_0)$.

    Note that
    \begin{equation}
      f'(z)=-\frac{\sqrt{q}}{1-q}\cdot\frac{\left((\sqrt{\gamma}+\sqrt{q})z-(1+\sqrt{\gamma q})\right)^2}{z(z-\sqrt{q})(1-\sqrt{q}z)}.
    \end{equation}
    Therefore near $z_0$, we have the following expansions
    \begin{equation}
    f(z)=f(z_0)-\frac{q^{1/2}(\sqrt{q}+\sqrt{\gamma})^{5}}{3\gamma^{1/2}(1-q)^3(1+\sqrt{q\gamma})}(z-z_0)^3+O(|z-z_0|^4),
    \end{equation}
    and
    \begin{equation}
    \phi(z)=-(x+u)\left(\frac{q^{1/6}(\sqrt{q}+\sqrt{\gamma})^{5/3}}{\gamma^{1/6}(1-q)(1+\sqrt{\gamma q})^{1/3}}(z-z_0)+O(|z-z_0|^2)\right).
    \end{equation}

We deform the contour such that it intersects a small neighborhood of $z_0$. For all $z$ on the contour but outside the above neighborhood of $z_0$, $\Re (f(z)-f(z_0))\ge c$ and $\Re \phi(z) \le - c(x+u)$ for some positive constant $c$. Thus by changing the variables near $z_0$ one can obtain
    \begin{equation}
    \begin{split}
      \tilde U(i,k)&=O(e^{-c\epsilon^3N})+b_0(\gamma)^{-1}N^{-1/3}\int_{2\epsilon N^{1/3}e^{-i\pi/3 }}^{2\epsilon N^{1/3} e^{i\pi/3}}e^{\frac{1}{3}\xi^3-(x+u)\xi}\frac{d\xi}{2\pi i}(1+O(N^{-1/3}))\\
      &=O(e^{-c\epsilon^3N})+b_0(\gamma)^{-1}N^{-1/3}\Ai(x+u)(1+O(N^{-1/3})),
      \end{split}
    \end{equation}
    where $c,\epsilon>0$ are constants which only depend on $L$ (the lower bound of $x+u$).  Similarly we have
    \begin{equation}
    \begin{split}
      \tilde V(k,j)=O(e^{-c\epsilon^3N})+b_0(\gamma)^{-1}N^{-1/3}\Ai(y+u)(1+O(N^{-1/3})).\\
    \end{split}
    \end{equation}
    Hence
    \begin{equation}
    \begin{split}
    b_0(\gamma)N^{1/3}K_n(i,j) &=O(e^{-c\epsilon^3N})+O(N^{1/3}e^{-c\epsilon^3N})\int_0^\infty\Ai(y+u)du+O(N^{1/3}e^{-c\epsilon^3N})\int_0^\infty\Ai(x+u)du
  \\
  &+\int_0^\infty\Ai(x+u)\Ai(y+u)du(1+O(N^{-1/3}))
    \end{split}
    \end{equation}

    Note that $x,y\ge s\ge L$. By using the asymptotics of the Airy function, we immediately obtain
    \begin{equation}
    |b_0(\gamma)N^{1/3}K_n(i,j)|\le e^{-c\rq{}(\min\{x^{3/2},c\rq{}\rq{}N\}+\min\{y^{3/2},c\rq{}\rq{}N\})}
    \end{equation}
    for large enough $N, L$, where $c\rq{}, c\rq{}\rq{}>0$ are both independent of $x,y,\gamma$.

    Therefore
$
    | \Tr(K_n^l)|\le e^{-c\rq{}ls^{3/2}}, l=1,2,\cdots,
$
    for large enough $N, L$, and $s\in [L, \delta N^{2/3}]$, provided $\delta^{3/2}\le c\rq{}\rq{}$. This estimate implies the following
    \begin{equation}
    \left|\sum_{k_i\in\{n,n+1,\cdots\}, i=1,\cdots, l}\frac{1}{l!}\det\left(K_n(k_i,k_j)\right)_{i,j=1}^l\right|\le e^{-c\rq{}ls^{3/2}}.
    \end{equation}

    Hence
    \begin{equation}
    \Prob\left(G([\gamma N], N)\ge a_0(\gamma) N+sb_0(\gamma) N^{1/3}\right)= 1-\det(1-K_n)\le \sum_{l=1}^{\infty}e^{-c\rq{}ls^{3/2}}
    \end{equation}
    and the lemma follows.
  \end{proof}

 \bibliographystyle{plain}
\bibliography{bibliography}

\end{document}